\newtheorem{theorem}{Theorem}
\newtheorem{lemma}[theorem]{Lemma}
\newtheorem{definition}[theorem]{Definition}
\newtheorem{proposition}[theorem]{Proposition}
\newtheorem{corollary}[theorem]{Corollary}
\newtheorem{remark}[theorem]{Remark}
\numberwithin{theorem}{section}
\numberwithin{equation}{section}
\newcommand{\mint}{- \mskip-19,5mu \int}
\def\N{\mathbb{N}}
\def\R{\mathbb{R}}
\renewcommand{\d}{\mathrm{d}}
\newcommand{\dx}{\mathrm{d}x}
\newcommand{\dt}{\mathrm{d}t}
\renewcommand{\epsilon}{\varepsilon}
\DeclareMathOperator{\Div}{div}
\renewcommand{\epsilon}{\varepsilon}
\newcommand{\eps}{\varepsilon}
\renewcommand{\rho}{\varrho}
\def\eqn#1$$#2$${\begin{equation}\label#1#2\end{equation}}
\newcommand{\power}[2]{\bm{#1^{\mbox{\unboldmath{\scriptsize$#2$}}}}}
\newcommand{\abs}[1]{|#1|}
\newcommand{\babs}[1]{\big|#1\big|}
\def\Xint#1{\mathchoice
    {\XXint\displaystyle\textstyle{#1}}%
    {\XXint\textstyle\scriptstyle{#1}}%
    {\XXint\scriptstyle\scriptscriptstyle{#1}}%
    {\XXint\scriptscriptstyle\scriptscriptstyle{#1}}%
    \!\int}
\def\XXint#1#2#3{\setbox0=\hbox{$#1{#2#3}{\int}$}
    \vcenter{\hbox{$#2#3$}}\kern-0.5\wd0}
\def\bint{\Xint-}
\def\dashint{\Xint{\raise4pt\hbox to7pt{\hrulefill}}}
\def\Xiint#1{\mathchoice
    {\XXiint\displaystyle\textstyle{#1}}%
    {\XXiint\textstyle\scriptstyle{#1}}%
    {\XXiint\scriptstyle\scriptscriptstyle{#1}}%
    {\XXiint\scriptscriptstyle\scriptscriptstyle{#1}}%
    \!\iint}
\def\XXiint#1#2#3{\setbox0=\hbox{$#1{#2#3}{\iint}$}
    \vcenter{\hbox{$#2#3$}}\kern-0.5\wd0}
\def\biint{\Xiint{-\!-}}
\subjclass[2010]{35B65, 35K67, 35K40, 35K55}
\keywords{Porous medium type systems, higher integrability, gradient estimates}
\begin{document}
\title[Higher integrability for the singular porous medium system]{Higher integrability for the\\ singular porous medium system}
\date{\today}

\author[V. B\"ogelein]{Verena B\"{o}gelein}
\address{Verena B\"ogelein\\
Fachbereich Mathematik, Universit\"at Salzburg\\
Hellbrunner Str. 34, 5020 Salzburg, Austria}
\email{verena.boegelein@sbg.ac.at}

\author[F. Duzaar]{Frank Duzaar}
\address{Frank Duzaar\\
Department Mathematik, Universit\"at Erlangen--N\"urnberg\\
Cauerstrasse 11, 91058 Erlangen, Germany}
\email{frank.duzaar@fau.de}

\author[C. Scheven]{Christoph Scheven}
\address{Christoph Scheven\\ Fakult\"at f\"ur Mathematik, 
Universit\"at Duisburg-Essen\\45117 Essen, Germany}
\email{christoph.scheven@uni-due.de}

\maketitle

%****************************************************************

\begin{abstract}
In this paper we establish in the fast diffusion range the higher integrability of the spatial gradient of weak solutions to porous medium systems. The result comes along with an explicit reverse H\"older inequality for the gradient.
The novel feature in the proof is a suitable intrinsic scaling for
space-time cylinders combined with reverse H\"older inequalities and a Vitali covering argument within this geometry. The main result
holds for the natural range of parameters suggested by other regularity results.  Our result applies to general fast diffusion systems and includes both,
nonnegative and signed solutions in the case
of equations. The methods of proof are purely vectorial in their structure.
\end{abstract}

\section{Introduction and results}
In this paper we study regularity of solutions to second-order
parabolic systems
\begin{equation}\label{general-PME}
  \partial_t u -\Div \mathbf A\big(x,t,u,D\big(|u|^{m-1}u\big)\big) =\Div F
\end{equation}
on a space-time cylinder  $\Omega_T:= \Omega\times (0,T)$ over a bounded domain
$\Omega\subset\R^n$, $n\in\N$, and $T>0$. Precise structural assumptions for the vector field
$\mathbf A$ are presented later. The principal prototype  is the inhomogeneous porous medium system
\begin{equation}\label{PME} 
  \partial_t u - \Delta\big(|u|^{m-1}u\big) = \Div F,
\end{equation}
with $m>0$. As usual, solutions to \eqref{general-PME} are taken in a weak sense, i.e.~they are assumed to belong to a parabolic Sobolev space whose
amount of integrability is determined by the growth of the vector
field $\mathbf{A}$ with respect to the gradient variable, cf.~Definition \ref{def:weak_solution}.
With the choice $m=1$  we recover the heat
equation. Equation \eqref{PME} has a different behavior when $m>1$ or $m<1$. The first case is called
slow diffusion range, since disturbances propagate with finite speed and free boundaries may occur, while
in the second case disturbances propagate with infinite speed and 
extinction in finite time may occur. This range is called fast diffusion range. 
For more information on the theory for the porous medium equation and related regularity results we refer to \cite{CaVaWo, DiBenedetto_Holder, Vazquez-1, Vazquez-2} and the references therein.

The main purpose of this paper is to establish a higher
integrability result for the gradient of weak solutions of porous
medium equations and systems of the type \eqref{general-PME} in the fast diffusion range.
More precisely, we show that there exists a universal constant $\epsilon >0$, such that
\begin{equation}\label{hi-int-PME}
  	D\big(|u|^{m-1}u\big)
	\in 
	L^{2+\eps}_{\rm loc} ,
\end{equation}
whenever $u$ is a weak solution to  \eqref{general-PME}, thereby ensuring that for weak solutions $u$
of the porous medium system the spatial gradient of $|u|^{m-1}u$  belongs to a slightly better Lebesgue space than the natural energy space $L^2$. This implies that porous medium systems as in \eqref{general-PME} possess the self-improving property of integrability. Our result comes along with a quantitative local
reverse H\"older type estimate for $|D(|u|^{m-1}u)|$; see Theorem \ref{thm:higherint}. 
The higher integrability for   porous medium systems as in  \eqref{general-PME} has been an open problem for a long time, even in the case of equations and non-negative solutions. Here we give a positive answer
in the  fast diffusion range 
\begin{equation}\label{lower-bound}
  m_c:=\frac{(n-2)_+}{n+2}<m\le 1.
\end{equation}
The lower bound on $m$ is natural and appears also in
other regularity results for porous medium equations, cf.~the
discussion in \cite[\S 6.21]{DBGV-book}.  For example,  solutions might be unbounded in the super-critical range $0<m\le m_c$.

The central idea in the proof of our main result is a new kind of intrinsic geometry. Until now, variants of 
this idea have been  successfully used in establishing  the self-improving property of integrability for the parabolic $p$-Laplacian  system \cite{Kinnunen-Lewis:1} and very recently  in the slow diffusion range $m\ge 1$ for the porous medium equation \cite{Gianazza-Schwarzacher} and  system \cite{BDKS-higher-int}. 
%More, precisely \cite{Gianazza-Schwarzacher} covers the case of non-negative solutions, while \cite{BDKS-higher-int} additionally includes signed  and vector valued solutions.  
The central idea here is the construction of  suitable intrinsic cylinders $	Q_{r,s}(z_o):=B_r(x_o)\times(t_o-s,t_o+s)$ with $z_o=(x_o,t_o)$. Since the equation is nonlinear with respect to
$u$, we use cylinders whose space-time scaling depends on the mean values of  $|u|^{1+m}$. This choice is dictated by the leading term on the right-hand side in the energy estimate, which is of order $1+m$; cf.~Lemma~\ref{lem:energy}. This heuristic argument motivates to consider space-time cylinders $Q_{r,s}(z_o)$, such that the quotient $\frac{s}{r^{\frac{1+m}{m}}}$ satisfies
\begin{equation}\label{scaling-intro}
  \frac{s}{r^{\frac{1+m}{m}}}=\theta^{1-m},
  \mbox{ with}\quad
  \theta^{1+m}
  \approx
  \biint_{Q_{r,s}(z_o)} 
  \frac{|u|^{1+m}}{r^{\frac{1+m}m}}\,\dx\dt .
\end{equation}
In this  geometry the only ingredients for the proof of parabolic Sobolev-Poincar\'e and reverse H\"older type inequalities are the standard energy estimate and a gluing lemma; see Section~\ref{sec:energy}.  The construction of a system of such intrinsic cylinders is quite involved, since the cylinders on the right-hand side
\eqref{scaling-intro} also depend on the parameter $\theta$. 
In fact, we have to distinguish between two regimes, the non-singular and the singular regime. The first is characterized by the fact that cylinders are intrinsic, the latter 
by the fact that cylinders are only sub-intrinsic, which means that \eqref{scaling-intro}$_2$ only holds as an inequality where  the mean value integral  is bounded from above by $\theta^{1+m}$.
In both regimes we need to establish reverse H\"older type inequalities. In the actual construction of the cylinders, we modify the argument from \cite{Gianazza-Schwarzacher}; see also \cite{BDKS-higher-int} which is better suited to our purposes here.

At this stage some words to classify our result in the history of the problem  of  higher integrability
are appropriate. In the stationary case of elliptic systems the self-improving property was
first observed by  Elcrat \& Meyers \cite{Meyers-Elcrat}, see also the monographs 
\cite[Chapter~V, Theorem 2.1]{Giaquinta:book} and \cite[Section
6.4]{Giusti:book} and the references therein. The first higher integrability result for parabolic systems
goes back to Giaquinta \& Struwe \cite[Theorem 2.1]{Giaquinta-Struwe}.  For parabolic systems with $p$-growth, whose 
principle prototype is the parabolic $p$-Laplacian system, the higher integrability of the gradient of weak solutions was established by Kinnunen \& Lewis
\cite{Kinnunen-Lewis:1} in the range $p>\frac{2n}{n+2}$. This lower bound is natural and
appears  also in other contexts in the regularity theory of parabolic $p$-Laplace type systems; cf.~the monograph \cite{DiBe}.  
%The restriction \eqref{lower-bound} in the fast diffusion range is similar to the lower bound on $p$.  
In the meantime the result has been generalized  in various directions, such as global results and higher order parabolic systems with $p$-growth; see
\cite{Boegelein:1,Boegelein-Parviainen,Parviainen}. The corresponding problem for the porous medium equation turned out to
be more involved and remained open for a long time, even in the scalar case
for  non-negative solutions. Additionally to
the obvious anisotropic behavior  of the equation with respect to scalar multiplication of solutions, it is also not possible to
add constants to a solution without destroying the property of
being a solution. 
This  difficulty has recently been overcome by Gianazza \& Schwarzacher
\cite{Gianazza-Schwarzacher} who proved in the slow diffusion range $m\ge1$
that non-negative weak solutions of \eqref{general-PME} admit the
self-improving property of higher integrability of the gradient.
The main novelty in their proof is the use of a new intrinsic scaling.
Instead of scaling  cylinders with
respect to $|Du|$  as in the case of the parabolic $p$-Laplacian (cf.~\cite{DiBe} and the references therein),
they  work with cylinders which are intrinsically
scaled with respect to $u$. The proof, however, uses the method of expansion of positivity and therefore can not be extended to signed solutions, porous medium type systems and  the fast diffusion range.
A simpler and more flexible proof, which does not rely on the expansion of positivity and which covers both
signed solutions and porous medium systems is given in
\cite{BDKS-higher-int}. Finally, in \cite{BDKS-doubly} the higher integrability is shown for doubly nonlinear
parabolic systems, whose
prototype is 
\begin{equation*}
  \partial_t\big(|u|^{p-2}u\big)-\Div(|Du|^{p-2}Du)=\Div \big( |F|^{p-2}F\big).
\end{equation*}
In this equation
aspects of both the porous
medium equation and the parabolic $p$-Laplace equation play a role. Therefore the intrinsic scaling  has to take
into account the degeneracy of the system both with respect to the
gradient variable and with respect to the solution
itself. In \cite{BDKS-doubly} the higher integrability is established for exponents $p$ in the somewhat unexpected range
$\max\{\frac{2n}{n+2},1\}<p<\frac{2n}{(n-2)_+}$. The lower bound also appears for the parabolic $p$-Laplace system \cite{Kinnunen-Lewis:1}, while the upper bound corresponds exactly to the lower
bound in~\eqref{lower-bound} for the porous medium equation in the fast diffusion range.

We point out that independently of us,
Gianazza \& Schwarzacher \cite{Gianazza-Schwarzacher:m<1} proved the higher integrability result
in the scalar case for  nonnegative solutions  in the fast diffusion
range~\eqref{lower-bound}. In contrast to \cite{Gianazza-Schwarzacher:m<1}, we prove the higher integrability regardless of whether the
solution is non-negative or signed in the scalar case, or vector-valued in the case of  systems. In another point, our results are also different.
Instead of an
inhomogeneity given by a bounded function  $f$, we consider
a right-hand side in divergence form $\Div F$ with $F\in L^\sigma$ for some $\sigma>2$. In \cite{Gianazza-Schwarzacher:m<1} the boundedness assumption on $f$ is imposed to ensure that weak solutions are bounded. Here, we are able to deal with unbounded solutions.

\section{Notation and main result}

\subsection{Notations}
To keep formulations as simple as possible, we define the {\em
  power of a vector} or of a possibly {\it negative number}  by
$$
	\power{u}{\alpha}
	:=
	|u|^{\alpha-1}u,
	\quad\mbox{for $u\in\R^N$ and $\alpha>0$,}
$$
which in the case $u=0$  and $\alpha\in (0,1)$ we interpret as $\power{u}{\alpha}=0$. 
Throughout the paper we write $z_o=(x_o,t_o)\in \R^n\times\R$ for points in space-time. We use space-time cylinders 
\begin{equation}\label{def-Q}
	Q_\rho^{(\theta)}(z_o)
	:=
	B_\rho^{(\theta)}(x_o)\times\Lambda_\rho(t_o),
\end{equation}
where 
\begin{equation*}
  	B_\rho^{(\theta)}(x_o)
  	:=
  	\Big\{x\in\R^n: |x-x_o|<\theta^{\frac{m(m-1)}{1+m}}\rho\Big\}
\end{equation*}
and
\begin{equation*}
  	\Lambda_\rho(t_o)
  	:=
  	\big(t_o-\rho^{\frac{1+m}{m}},t_o+\rho^{\frac{1+m}{m}}\big)
\end{equation*}
with some scaling  parameter $\theta >0$. 
In the case $\theta =1$, we simply
omit the parameter in the notation and write
$$
	Q_\rho(z_o)
	:=
	B_\rho(x_o)\times\big(t_o-\rho^{\frac{1+m}{m}},t_o+\rho^{\frac{1+m}{m}}\big)
$$
instead of $Q_\rho^{(1)}(z_o)$.
If the center $z_o$ is clear from the context we omit it in the notation.

%Throughout the paper we write $u(t)$ for the map $u(\cdot, t)$, i.e. for the restriction of $u\in L^1(0,T;L^1(\Omega,\R^N))$ to the time slice $\Omega\times\{t\}$ with $t\in[0,T)$.
%Moreover, f
For a map $u\in L^1(0,T;L^1(\Omega,\R^N))$ and given measurable sets $A\subset\Omega$  and $E\subset\Omega_T$ with positive Lebesgue measure the slicewise mean $\langle u\rangle_{A}\colon (0,T)\to \R^N$ of $u$ on  $A$ is defined
by
\begin{equation*}
	\langle u\rangle_{A}(t)
	:=
	\mint_{A} u(t)\,\dx,
	\quad\mbox{for a.e.~$t\in(0,T)$,}
\end{equation*}
whereas  the mean value $(u)_{E}\in \R^N$ of $u$ on  $E$ is defined by
\begin{equation*}
	(u)_{E}
	:=
	\biint_{E} u\,\dx\dt.
\end{equation*}
Note that if $u\in C^0((0,T);L^2(\Omega,\R^N))$ the slicewise means are defined for any $t\in (0,T)$. 
If  $A$ is a ball $B_\rho^{(\theta)}(x_o)$,  we write $\langle
u\rangle_{x_o;\rho}^{(\theta)}(t):=\langle
u\rangle_{B_\rho^{(\theta)}(x_o)}(t)$. Similarly,   if $E$ is a
cylinder of the form $Q_\rho^{(\theta)}(z_o)$, we use the shorthand
notation $(u)^{(\theta)}_{z_o;\rho}:=(u)_{Q_\rho^{(\theta)}(z_o)}$.

\subsection{General Setting and Results}
We consider porous medium type systems of the form 
\begin{equation}\label{por-med-eq}
  \partial_t u -\Div \mathbf A(x,t,u,D\power{u}{m}) =\Div F
  \quad\mbox{in $\Omega_T$,}
\end{equation}
where $\mathbf A\colon \Omega_T\times\R^N\times\R^{Nn}\to \R^{Nn}$ is a Carath\'eodory vector field satisfying the following ellipticity and growth conditions that are
modeled after the prototype system \eqref{PME}.
For structural constants $0<\nu\le L<\infty$, we assume that
\begin{equation}\label{growth}
\left\{
\begin{array}{c}
	\mathbf A(x,t,u,\xi)\cdot\xi\ge \nu|\xi|^2\, ,\\[6pt]
	| \mathbf A(x,t,u,\xi)|\le L|\xi|,
\end{array}
\right.
\end{equation}
for a.e.~$(x,t)\in \Omega_T$ and any $(u,\xi)\in \R^N\times\R^{Nn}$.
To formulate the main result, we introduce the notion of {\it weak  solution}. 

\begin{definition}\label{def:weak_solution}\upshape
Let $m>0$ and $\mathbf A\colon \Omega_T\times \R^N\times\R^{Nn}\to\R^{Nn}$ be a vector field satisfying \eqref{growth} and $F\in L^{2}(\Omega_T,\R^{Nn})$. 
A function 
\begin{equation}\label{spaces}
	u\in C^0 \big((0,T); L^{1+m}(\Omega,\R^N)\big)
	\quad\mbox{with}\quad 
	\power{u}{m}\in L^2\big(0,T;W^{1,2}(\Omega,\R^N)\big)
\end{equation} 
is a \textit{weak solution} to the porous medium type system \eqref{por-med-eq} if and only if the identity
\begin{align}\label{weak-solution}
	\iint_{\Omega_T}\big[u\cdot\partial_t\varphi - \mathbf A(x,t,u,D\power{u}{m})\cdot D\varphi\big]\dx\dt
    	=
    	\iint_{\Omega_T} F\cdot D\varphi \,\dx\dt
\end{align}
holds true, for any testing function $\varphi\in
C_0^\infty(\Omega_T,\R^N)$. 
\qed
\end{definition}
Our main result reads as follows:

\begin{theorem}\label{thm:higherint}
Assume that 
$$
	m_c:=\frac{(n-2)_+}{n+2}<m\le 1
$$ 
and $\sigma>2$. Then, there exists $\eps_o=\eps_o(n,m,\nu,L)\in (0,1]$
such that whenever $F\in L^\sigma(\Omega_T,\R^{Nn})$ and $u$
%\begin{equation*}
%	u\in C^0 \big((0,T); L^{1+m}_{\rm loc}(\Omega,\R^N)\big)
%	\quad\mbox{with}\quad 
%	\power{u}{m}\in L^2_{\rm loc}\big(0,T;W^{1,2}_{\rm loc}(\Omega,\R^N)\big)
%\end{equation*} 
is a weak solution of Equation~\eqref{por-med-eq}  in the sense of Definition~\ref{def:weak_solution}
under the assumptions~\eqref{growth}, then with  $\epsilon_1:=\min\{\eps_o,\sigma-2\}$ we have
$$
  	D\power{u}{m} 
	\in 
	L^{2+\eps_1}_{\rm loc}\big(\Omega_T,\R^{Nn}\big).
$$
Moreover, for every $\eps\in(0,\epsilon_1]$ and every cylinder
$
  Q_{2R}(z_o)\subseteq\Omega_T
$,
we have the quantitative local higher integrability estimate
% \begin{align}\label{eq:higher-int}
% 	&\biint_{Q_{R}}
% 	|D\power{u}{m}|^{2+\epsilon} \d x\d t\nonumber \\
% 	&\qquad \le
% 	c
% 	\Bigg[
% 	1+\biint_{Q_{2R}} \bigg[\frac{|u|^{1+m}}{R^{\frac{1+m}{m}}} + |F|^{2}\bigg]
% 	\dx\dt
% 	\Bigg]^{\frac{\epsilon md}{1+m}}
% 	\biint_{Q_{2R}}
% 	|D\power{u}{m}|^{2} \,\dx\dt\\
%         &\qquad\qquad+
% 	c\,\biint_{Q_{2R}}  |F|^{2+\epsilon}
% 	\,\dx\dt ,\nonumber
% \end{align}
% with $c=c(n,m,$ $\nu,L)\ge 1$. Here,  
% \begin{equation*}
%   	d
% 	:=
%   	\frac{(1+m)^2}{m[m(n+2)+2-n]}
%  \end{equation*}
\begin{align}\label{eq:higher-int}
	&\biint_{Q_{R}}
	|D\power{u}{m}|^{2+\epsilon} \d x\d t\nonumber \\
	&\qquad \le
	c
	\Bigg[
	1+\biint_{Q_{2R}} \bigg[\frac{|u|^{1+m}}{R^{\frac{1+m}{m}}} + |F|^{2}\bigg]
	\dx\dt
	\Bigg]^{\frac{\epsilon d}{2}}
	\biint_{Q_{2R}}
	|D\power{u}{m}|^{2} \,\dx\dt \nonumber\\
        &\qquad\qquad+
	c\,\biint_{Q_{2R}}  |F|^{2+\epsilon}
	\,\dx\dt .
      \end{align}
with $c=c(n,m,$ $\nu,L)\ge 1$. Here,  
\begin{equation}\label{def:d}
  	d
	:=
  	\frac{2(1+m)}{2(1+m)-n(1-m)}
\end{equation}
denotes the scaling deficit.
%  \begin{align*}
%	\biint_{Q_{R}(z_o)} &
%	\big|D\power{u}{m}\big|^{2+\epsilon} \d x\d t \nonumber\\
%	&\le
%	c
%	\Bigg[
%	1+\biint_{Q_{2R}(z_o)} \bigg[\frac{|u|^{2m}}{R^2} + |F|^{2}\bigg]
%	\dx\dt
%	\Bigg]^{\frac{\epsilon m}{1+m}}
%	\biint_{Q_{2R}(z_o)}
%	\big|D\power{u}{m}\big|^{2} \dx\dt \nonumber\\
%	&\quad +
%	c\,\biint_{Q_{2R}}  |F|^{2+\epsilon}
%	\dx\dt .
%  \end{align*}
\end{theorem}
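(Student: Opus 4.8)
The plan is to run the classical Giaquinta--Modica--Gehring self-improvement scheme, but entirely within the intrinsic geometry dictated by \eqref{scaling-intro}. The backbone is a reverse H\"older inequality on intrinsic cylinders: for a suitable system of cylinders $Q_{r,s}(z_o)$ satisfying \eqref{scaling-intro} (in the non-singular regime) or only its sub-intrinsic version (in the singular regime), one shows
$$
  \biint_{Q_{r/2,s/2}} |D\power{u}{m}|^2\,\dx\dt
  \le
  c\bigg(\biint_{Q_{r,s}} |D\power{u}{m}|^{2/q}\,\dx\dt\bigg)^{q}
  + c\biint_{Q_{r,s}}\big(|F|^2 + 1\big)\,\dx\dt
$$
for some $q>1$. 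First I would establish this estimate by combining the energy estimate of Lemma~\ref{lem:energy} with a gluing lemma to control the slicewise means, and then a parabolic Sobolev--Poincar\'e inequality adapted to the intrinsic scaling; the point of the scaling \eqref{scaling-intro} is precisely that the extra $\theta$-powers produced by the energy estimate and by Sobolev--Poincar\'e cancel, leaving a clean reverse H\"older inequality with a universal constant. The deficit $d$ in \eqref{def:d} enters here as the exponent measuring the mismatch between the parabolic scaling $\rho^{(1+m)/m}$ in time and the $\theta$-dependent scaling in space.

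The second and more delicate ingredient is the construction of the intrinsic cylinders themselves and the Vitali covering argument. Because the radius appearing on the right of \eqref{scaling-intro} depends on $\theta$, which in turn is defined through an integral over that very cylinder, one cannot simply prescribe $\theta$; instead one fixes $r$ and, for each center $z_o$, chooses $\theta=\theta_{z_o}(r)$ by a continuity/maximality argument, distinguishing the non-singular regime (where a genuine intrinsic $\theta$ exists, so that \eqref{scaling-intro}$_2$ holds with equality) from the singular regime (where the mean value stays below $\theta^{1+m}$ for all admissible $\theta$ and one works with the sub-intrinsic cylinders). Here I would follow the construction of \cite{Gianazza-Schwarzacher}, in the streamlined form of \cite{BDKS-higher-int}, adapting the monotonicity properties of the maps $s\mapsto$ (intrinsic mean) to the range $m<1$; the sign of the exponent $m(m-1)/(1+m)$ in $B_\rho^{(\theta)}$ changes relative to the slow diffusion case, which is what forces the case distinction and is, I expect, the main obstacle.

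With the reverse H\"older inequalities in hand on both families of cylinders, the remaining steps are standard in spirit though technical in execution. One sets up a stopping-time/upper-level-set argument: fix a superlevel set $\{|D\power{u}{m}|^2 > \lambda\}$ intersected with a fixed sub-cylinder of $Q_{2R}$, and for $\lambda$ large enough (comparable to the right-hand side of \eqref{eq:higher-int}) extract, via a Vitali covering lemma in the intrinsic metric, a countable disjoint family of intrinsic cylinders on which the mean of $|D\power{u}{m}|^2$ is comparable to $\lambda$. On each such cylinder the reverse H\"older inequality upgrades the $L^2$-information to an $L^{2/q}$-estimate; summing over the family and using a standard interpolation/iteration lemma (of Giaquinta--Modica--Gehring type, e.g.\ \cite[Chapter~V]{Giaquinta:book}) yields a reverse H\"older inequality for $|D\power{u}{m}|^{2+\eps}$ for small $\eps=\eps(n,m,\nu,L)$. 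Tracking the $\theta$-powers through the covering argument produces exactly the factor $\big[1+\fint\fint_{Q_{2R}}(|u|^{1+m}R^{-(1+m)/m}+|F|^2)\big]^{\eps d/2}$ in \eqref{eq:higher-int}, the exponent $d$ being the accumulated scaling deficit; the inhomogeneity contributes the separate term $\fint\fint_{Q_{2R}}|F|^{2+\eps}$ once one chooses $\eps_1=\min\{\eps_o,\sigma-2\}$ so that $F\in L^{2+\eps_1}$ locally. A covering argument passing from a single intrinsic cylinder to the standard cylinder $Q_R\subset Q_{2R}$ finishes the proof.
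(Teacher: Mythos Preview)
Your strategy is correct and is essentially the one carried out in the paper: energy estimate plus gluing lemma feed into a Sobolev--Poincar\'e inequality on sub-intrinsic cylinders, which in turn yields a reverse H\"older inequality under either an intrinsic coupling from below or a gradient-dominance alternative; the cylinders are built via the rising-sun modification of \cite{Gianazza-Schwarzacher,BDKS-higher-int}, a Vitali lemma is proved in this geometry, and a stopping-time argument on super-level sets followed by integration in $\lambda$ gives \eqref{eq:higher-int}. Two small corrections: the reverse H\"older inequality has no additive ``$+1$'' on the right (the constant enters only through the threshold $\lambda_o$ in the level-set argument), and the factor $[1+\ldots]^{\eps d/2}$ arises not from tracking $\theta$-powers through the covering but from the definition of $\lambda_o$ and the bound $\theta_\rho\le c(R/\rho)^{\alpha}\lambda_o$.
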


\begin{remark}
\textup{  The scaling deficit $d$ that appears in the higher integrability
  estimate reflects the inhomogeneous scaling behavior of the porous
  medium system. In particular, we have  $d=1$ if
  $m=1$, which corresponds to the case of the classical heat
  equation. On the other hand,  in the case $n\ge2$ we have
  $d\to\infty$ in the limit $m\downarrow m_c$.
  The
  latter 
  underlines the significance of the lower bound $m>m_c$. If $m=1$, then  \eqref{eq:higher-int} is similar to the reverse
  H\"older inequalities in \cite{Giaquinta-Struwe} and \cite{Kinnunen-Lewis:1} with  $p=2$.\qed
}\end{remark}
\begin{remark}
\textup{ The above higher integrability result can easily be extended to the case of vector-fields $\mathbf A$ satisfying  the more general growth and coercivity conditions 
\begin{equation*}
\left\{
\begin{array}{c}
	\mathbf A(x,t,u,\xi)\cdot\xi\ge \nu|\xi|^2-h_1\, ,\\[6pt]
	| \mathbf A(x,t,u,\xi)|\le L|\xi| +h_2,
\end{array}
\right.
\end{equation*}
with non-negative measurable functions $h_1,h_2\colon \Omega_T\to [0,\infty]$, so that
$h_1+ h_2^2\in L^{\sigma/2}(\Omega_T)$ for the exponent $\sigma>2$
from Theorem~\ref{thm:higherint}.
\qed
}\end{remark}

The  quantitative local estimate \eqref{eq:higher-int} can easily be converted into an estimate on standard
parabolic cylinders $C_R(z_o):=B_{R}(x_o)\times(t_o-R^2,t_o+R^2)$. The precise statement is:

\begin{corollary}\label{cor:higher-int}
  Under the assumptions of Theorem \ref{thm:higherint},    on any cylinder $C_{2R}(z_o)\subseteq\Omega_T$  and for every $\eps\in(0,\eps_1]$ we have
     \begin{align*}
	R^{2+\eps} &
	\biint_{C_{R}(z_o)}
    |D\power{u}{m}|^{2+\epsilon} \d x\d t \\
	&\le
	c\,R^2\bigg[
	1+\biint_{C_{2R}(z_o)} \big[\abs{u}^{1+m} + R^2|F|^2\big] \d x\d t 
	\bigg]^{\frac{\epsilon d}{2}}
	\biint_{C_{2R}(z_o)}
	|D\power{u}{m}|^{2} \dx\dt \\
	&\quad +
	c\, R^{2+\epsilon}
	\biint_{C_{2R}(z_o)} |F|^{2+\epsilon} \dx\dt ,
   \end{align*}
for  a constant $c=c(n,m,\nu,L)$.
\end{corollary}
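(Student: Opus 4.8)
The plan is to deduce the Corollary from the reverse Hölder inequality \eqref{eq:higher-int} by a Vitali covering argument. After a translation we may assume $z_o=0$, and we abbreviate $\kappa:=\tfrac{1+m}{m}$, which satisfies $\kappa\ge 2$ because $m\le 1$. The only structural point is that the cylinders $Q_\rho$ appearing in Theorem~\ref{thm:higherint} carry the time--exponent $\kappa$ instead of $2$, so that $C_R$ and a $Q$--cylinder of the same spatial radius have different shapes; consequently the radius of the covering cylinders must be chosen in terms of $R$. A convenient choice is $\rho:=\tfrac14\min\{R,R^{2/\kappa}\}$. Using $\kappa\ge2$ one checks immediately that $2\rho\le\tfrac R2$ and $(2\rho)^{\kappa}\le\tfrac14R^2$, whence
\[
  Q_{2\rho}(\zeta)\subseteq C_{2R}\subseteq\Omega_T\qquad\text{for every }\zeta\in\overline{C_R}.
\]
A standard Vitali covering argument, carried out in the geometry for which the $Q_\rho$ play the role of balls, then produces finitely many points $\zeta_1,\dots,\zeta_N\in\overline{C_R}$ with $C_R\subseteq\bigcup_{j}Q_\rho(\zeta_j)$ and $\sum_j\mathbf 1_{Q_{2\rho}(\zeta_j)}\le c(n)\,\mathbf 1_{C_{2R}}$.

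Next I would apply \eqref{eq:higher-int} on each cylinder $Q_\rho(\zeta_j)$, with $\rho$ in the role of $R$; this is legitimate since $Q_{2\rho}(\zeta_j)\subseteq\Omega_T$. Enlarging every mean value integral on the right-hand side of \eqref{eq:higher-int} from $Q_{2\rho}(\zeta_j)$ to $C_{2R}$ --- which costs the volume ratio $|C_{2R}|/|Q_{2\rho}|$ --- the bracket is bounded above by the $\zeta_j$--independent quantity
\[
  \Phi:=1+\frac{|C_{2R}|}{|Q_{2\rho}|}\biint_{C_{2R}}\Big(\tfrac{|u|^{1+m}}{\rho^{\kappa}}+|F|^{2}\Big)\dx\dt .
\]
Pulling $\Phi^{\epsilon d/2}$ out of the sum over $j$, using $|Q_\rho|/|Q_{2\rho}|=2^{-(n+\kappa)}$ together with the bounded--overlap property to collapse $\sum_j\iint_{Q_{2\rho}(\zeta_j)}$ into $c(n)\iint_{C_{2R}}$, and finally dividing by $|C_R|$ and using $|C_{2R}|/|C_R|=2^{n+2}$ to return to mean values, one obtains
\[
  \biint_{C_R}|D\power{u}{m}|^{2+\epsilon}\dx\dt\le c\,\Phi^{\epsilon d/2}\biint_{C_{2R}}|D\power{u}{m}|^{2}\dx\dt+c\biint_{C_{2R}}|F|^{2+\epsilon}\dx\dt ,
\]
with $c=c(n,m,\nu,L)$.

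It remains to multiply by $R^{2+\epsilon}$ and to absorb the powers of $R$ hidden in $\Phi$ into the bracket displayed in the Corollary. Writing $R^{2+\epsilon}=R^{2}\,(R^{2/d})^{\epsilon d/2}$ and inserting $|C_{2R}|/|Q_{2\rho}|=c(n,m)\,R^{n+2}\rho^{-(n+\kappa)}$ together with the explicit value of $\rho$, this reduces to checking that $R^{2/d}\,\rho^{-\kappa}\,|C_{2R}|/|Q_{2\rho}|$ and $R^{-2}R^{2/d}\,|C_{2R}|/|Q_{2\rho}|$ are controlled, so that $R^{2/d}\Phi\le c\big[1+\biint_{C_{2R}}(|u|^{1+m}+R^{2}|F|^{2})\dx\dt\big]$; raising to the power $\epsilon d/2$, multiplying by $R^{2}\biint_{C_{2R}}|D\power{u}{m}|^{2}\dx\dt$ and keeping the force term unchanged then yields the assertion. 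I expect this last step --- the bookkeeping of the powers of $R$ --- to be the only genuinely delicate point: since the intrinsic cylinders carry the time--exponent $\tfrac{1+m}{m}$ rather than $2$, the volume ratio, the weight $\rho^{-\kappa}$ and the prefactors $R^{2}$, $R^{2+\epsilon}$ each contribute non-trivial powers of $R$, and it is exactly the value \eqref{def:d} of the scaling deficit $d$ that is designed so that these cancel; it is convenient to carry out the computation separately in the two regimes $R\lesssim1$ and $R\gtrsim1$, in which the minimum defining $\rho$ is attained by its two branches.
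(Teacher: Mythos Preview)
Your covering strategy is sound up to the point where you must convert the bracket $\Phi$ at scale $\rho$ into the bracket at scale $R$; this last step, which you yourself flag as delicate, actually fails. The cancellation you hope for does not occur: the scaling deficit $d$ calibrates the \emph{intrinsic} geometry inside Theorem~\ref{thm:higherint}, not the passage between the $Q_\rho$-cylinders (time exponent $\kappa=\tfrac{1+m}{m}$) and the standard parabolic cylinders $C_R$ (time exponent $2$). Concretely, in the regime $R\le 1$ you have $\rho=R/4$, and the coefficient of $\biint_{C_{2R}}|u|^{1+m}$ in $R^{2/d}\Phi$ is
\[
  R^{2/d}\cdot\frac{|C_{2R}|}{|Q_{2\rho}|}\cdot\rho^{-\kappa}
  \;=\;c(n,m)\,R^{\,2+2/d-2\kappa}.
\]
A short computation shows $2+2/d-2\kappa<0$ for every $m\in(m_c,1)$, so this blows up as $R\downarrow 0$. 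In the regime $R\ge 1$ the additive ``$1$'' in $\Phi$ gives $R^{2/d}\cdot 1$, which is unbounded as $R\to\infty$ and cannot be absorbed into $1+\biint_{C_{2R}}(\cdots)$ (consider $u\equiv0$, $F\equiv0$). So the asserted bound $R^{2/d}\Phi\le c\big[1+\biint_{C_{2R}}(|u|^{1+m}+R^2|F|^2)\big]$ is false in both regimes.

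The paper avoids all of this by rescaling rather than covering directly: set $v(x,t):=u(x_o+Rx,t_o+R^2t)$, $G(x,t):=R\,F(x_o+Rx,t_o+R^2t)$, and $\mathbf B(x,t,u,\xi):=R\,\mathbf A(x_o+Rx,t_o+R^2t,u,R^{-1}\xi)$. Then $v$ solves the structurally identical system $\partial_tv-\Div\mathbf B(x,t,v,D\power{v}{m})=\Div G$ on the unit-scale cylinder $\widetilde Q:=Q_{2^{2m/(1+m)}}\subset C_2$, and one applies \eqref{eq:higher-int} there. Since $|D\power{v}{m}|=R\,|D\power{u}{m}|$ and $|G|=R\,|F|$ while $|v|^{1+m}=|u|^{1+m}$, scaling back produces exactly the powers $R^{2+\epsilon}$, $R^2$, $R^2|F|^2$ and $R^{2+\epsilon}|F|^{2+\epsilon}$ of the Corollary, with the additive ``$1$'' surviving unchanged; a final covering then replaces $C_{\gamma R}$ (with $\gamma=2^{(m-1)/(2m)}$) by $C_R$. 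The rescaling is what makes the dimensional bookkeeping automatic; your direct covering cannot reproduce it because Theorem~\ref{thm:higherint} is stated on $Q$-cylinders, and applying it at small radii $\rho$ generates the weight $\rho^{-\kappa}$ rather than the $R^{-2}$ (equivalently, the $R^2$ multiplying $|F|^2$) that the Corollary requires.
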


\section{Auxiliary Material}
In this section we provide the necessary tools which will be used later.
To ``re-absorb'' certain terms, we frequently  shall use the following iteration lemma, cf. \cite[Lemma 6.1]{Giusti:book}.

\begin{lemma}\label{lem:tech}
Let $0<\vartheta<1$, $A,C\ge 0$ and $\alpha > 0$. Then there exists a constant  $c = c(\alpha,\vartheta)$
such that for any non-negative bounded function $\phi\colon[r,\rho]\to [0,\infty)$ with $0<r<\rho$ satisfying
\begin{equation*}
	\phi(t)
	\le
	\vartheta\, \phi(s) + \frac{A}{(s-t)^\alpha} + C
	\qquad \text{for all $r\le t<s\le \varrho$,}
\end{equation*}
we have
\begin{equation*}
	\phi(r)
	\le
	c\,  \bigg[\frac{A}{(\varrho - r)^\alpha} + C\bigg].
\end{equation*}
\end{lemma}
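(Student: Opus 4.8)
The plan is to prove the lemma by the classical iteration over a geometrically contracting sequence of radii, using $\vartheta<1$ to force convergence of the resulting geometric series and the boundedness of $\phi$ to discard the remainder term.

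First I would fix $\tau\in(0,1)$ with $\vartheta\tau^{-\alpha}<1$; concretely $\tau:=\bigl(\tfrac{1+\vartheta}{2}\bigr)^{1/\alpha}$ works, since then $\tau^\alpha=\tfrac{1+\vartheta}{2}\in(\vartheta,1)$ and hence $\vartheta\tau^{-\alpha}=\tfrac{2\vartheta}{1+\vartheta}<1$. With this $\tau$ I introduce the strictly increasing sequence $t_i:=\varrho-\tau^i(\varrho-r)$ for integers $i\ge0$, so that $t_0=r$, $t_i\nearrow\varrho$, and $t_{i+1}-t_i=\tau^i(1-\tau)(\varrho-r)$.

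Next I would apply the hypothesis to each admissible pair $r\le t_i<t_{i+1}\le\varrho$, which gives
\[
	\phi(t_i)
	\le
	\vartheta\,\phi(t_{i+1})
	+\frac{A}{\tau^{i\alpha}(1-\tau)^\alpha(\varrho-r)^\alpha}
	+C
	\qquad\text{for all }i\ge0.
\]
Iterating this inequality $k$ times and summing the geometric contributions yields
\[
	\phi(r)
	\le
	\vartheta^k\phi(t_k)
	+\frac{A}{(1-\tau)^\alpha(\varrho-r)^\alpha}\sum_{i=0}^{k-1}\bigl(\vartheta\tau^{-\alpha}\bigr)^i
	+C\sum_{i=0}^{k-1}\vartheta^i .
\]
Since $\phi$ is bounded and $0<\vartheta<1$, we have $\vartheta^k\phi(t_k)\to0$ as $k\to\infty$; by the choice of $\tau$ the series $\sum_i(\vartheta\tau^{-\alpha})^i$ converges to $(1-\vartheta\tau^{-\alpha})^{-1}$, and $\sum_i\vartheta^i=(1-\vartheta)^{-1}$. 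Letting $k\to\infty$ I therefore arrive at
\[
	\phi(r)
	\le
	\frac{1}{(1-\vartheta\tau^{-\alpha})(1-\tau)^\alpha}\cdot\frac{A}{(\varrho-r)^\alpha}
	+\frac{C}{1-\vartheta},
\]
which is the asserted estimate with
\[
	c
	:=
	\max\Bigl\{\bigl[(1-\vartheta\tau^{-\alpha})(1-\tau)^\alpha\bigr]^{-1},\,(1-\vartheta)^{-1}\Bigr\},
\]
a constant depending only on $\alpha$ and $\vartheta$ via the explicit choice of $\tau$.

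There is no genuine obstacle in this argument; the only points demanding attention are the selection of $\tau$ close enough to $1$ so that both $\vartheta\tau^{-\alpha}<1$ and the geometric series sum, and the use of the (a priori merely qualitative) boundedness of $\phi$ to guarantee $\vartheta^k\phi(t_k)\to0$. I would also note that this boundedness enters only in that last limiting step and plays no role in the final constant $c$.
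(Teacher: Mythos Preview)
Your proof is correct and is precisely the standard argument; the paper does not supply its own proof of this lemma but simply cites \cite[Lemma~6.1]{Giusti:book}, whose proof is exactly the geometric iteration you carry out.
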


The following lemma can be deduced as in \cite[Lemma~8.3]{Giusti:book}.

\begin{lemma}\label{lem:Acerbi-Fusco}
For any $\alpha>0$, there exists a constant $c=c(\alpha)$ such that,
for all $a,b\in\R^N$, $N\in\N$, the following inequality holds true:
\begin{align*}
	\tfrac1c\big|\power{b}{\alpha} - \power{a}{\alpha}\big|
	\le
	\big(|a| + |b|\big)^{\alpha-1}|b-a|
	\le
	c \big|\power{b}{\alpha} - \power{a}{\alpha}\big|.
\end{align*}
\end{lemma}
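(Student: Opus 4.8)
The plan is to first prove only the \emph{upper} estimate
\[
  \babs{\power{b}{\alpha}-\power{a}{\alpha}}
  \le
  c(\alpha)\,\big(|a|+|b|\big)^{\alpha-1}|b-a|
\]
for \emph{every} exponent $\alpha>0$ and all $a,b\in\R^N$, and then to derive the remaining \emph{lower} estimate from it by a duality-type trick: since $\power{\cdot}{1/\alpha}$ is the inverse of $\power{\cdot}{\alpha}$ and $\babs{\power{u}{\alpha}}=|u|^\alpha$, applying the upper estimate with the reciprocal exponent $1/\alpha$ to the pair $\power{a}{\alpha},\power{b}{\alpha}$ gives
\[
  |b-a|
  \le
  c(1/\alpha)\,\big(|a|^\alpha+|b|^\alpha\big)^{\frac1\alpha-1}\,\babs{\power{b}{\alpha}-\power{a}{\alpha}}.
\]
Combined with the elementary comparison $\max\{|a|,|b|\}\le(|a|^\alpha+|b|^\alpha)^{1/\alpha}\le 2^{1/\alpha}\max\{|a|,|b|\}$, which shows that $(|a|^\alpha+|b|^\alpha)^{\frac1\alpha-1}$ and $(|a|+|b|)^{-(\alpha-1)}$ agree up to a constant depending only on $\alpha$, this is precisely the lower estimate after rearrangement. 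Hence everything reduces to the upper estimate.

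For the upper estimate, the map $\Phi(u):=\power{u}{\alpha}$ is locally Lipschitz on $\R^N\setminus\{0\}$ with
\[
  D\Phi(u)=|u|^{\alpha-1}\Big(\mathrm{Id}+(\alpha-1)\,\tfrac{u\otimes u}{|u|^2}\Big),
\]
a symmetric matrix whose eigenvalues are $|u|^{\alpha-1}$ and $\alpha|u|^{\alpha-1}$, so that $|D\Phi(u)\xi|\le\max\{1,\alpha\}|u|^{\alpha-1}|\xi|$ for all $\xi$. Moreover $s\mapsto\Phi(a+s(b-a))$ is absolutely continuous on $[0,1]$ (because $r\mapsto r^\alpha$ is absolutely continuous near $0$ for $\alpha>0$, which takes care of the possible crossing of the origin by the segment), so the fundamental theorem of calculus yields
\[
  \babs{\power{b}{\alpha}-\power{a}{\alpha}}
  \le
  \max\{1,\alpha\}\,|b-a|\int_0^1|a+s(b-a)|^{\alpha-1}\,\ds.
\]
It then remains to bound $\int_0^1|a+s(b-a)|^{\alpha-1}\,\ds$ by $c(\alpha)(|a|+|b|)^{\alpha-1}$. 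This is the only genuinely delicate point: for $\alpha\ge1$ it is immediate from $|a+s(b-a)|\le 2(|a|+|b|)$, but for $0<\alpha<1$ the integrand is singular and the segment may pass arbitrarily close to, or through, the origin.

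To handle the range $0<\alpha<1$ I would argue as follows. Assume $|a|\le|b|$ and set $d(s):=|a+s(b-a)|$. Completing the square in $s$ gives $d(s)^2=|b-a|^2(s-s_\ast)^2+d_\ast^2\ge|b-a|^2(s-s_\ast)^2$ for some $s_\ast\in\R$ and $d_\ast\ge0$, hence $d(s)^{\alpha-1}\le|b-a|^{\alpha-1}|s-s_\ast|^{\alpha-1}$; since the symmetric decreasing function $|r|^{\alpha-1}$ has largest integral over a unit interval when that interval is centred at the origin, this gives $\int_0^1 d(s)^{\alpha-1}\,\ds\le\frac{2^{1-\alpha}}{\alpha}|b-a|^{\alpha-1}$. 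Now split into two cases. If $|b-a|>\frac14|b|$, then $|b-a|^{\alpha-1}\le c(\alpha)|b|^{\alpha-1}\le c(\alpha)(|a|+|b|)^{\alpha-1}$ and we are done. If $|b-a|\le\frac14|b|$, then $|a|\ge\frac34|b|$ and $d(s)\ge|a|-|b-a|\ge\frac12|b|$ for every $s\in[0,1]$, so $\int_0^1 d(s)^{\alpha-1}\,\ds\le(\tfrac12|b|)^{\alpha-1}\le c(\alpha)(|a|+|b|)^{\alpha-1}$. In either case the required bound holds, and the lemma follows. (As an alternative one could argue softly: by homogeneity of both sides one may assume $|b|=1\ge|a|$, and then the quotient of the two sides extends continuously and stays bounded away from $0$ and $\infty$ on the resulting compact parameter set, the only potential degeneracy being $a\to b$, which is controlled by the two-sided eigenvalue bounds for $D\Phi$ in a neighbourhood of $\{|u|=1\}$.)
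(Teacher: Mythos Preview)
Your proof is correct. The paper does not give its own argument but defers to \cite[Lemma~8.3]{Giusti:book}, where the proof proceeds via essentially the same integral representation along the segment together with the two-sided eigenvalue bound $\min\{1,\alpha\}|u|^{\alpha-1}\le |D\Phi(u)|\le \max\{1,\alpha\}|u|^{\alpha-1}$ for $\Phi(u)=\power{u}{\alpha}$. Your duality trick---deducing the lower bound from the upper bound applied with exponent $1/\alpha$ to the pair $\power{a}{\alpha},\power{b}{\alpha}$---is a clean shortcut that avoids the need to bound $\int_0^1|a+s(b-a)|^{\alpha-1}\,\ds$ from below as well as from above; the standard route handles both directions symmetrically via the lower eigenvalue bound, which is slightly more delicate in the singular regime $0<\alpha<1$.
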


The next lemma is an immediate consequence of Lemma~\ref{lem:Acerbi-Fusco}.

\begin{lemma}\label{lem:a-b}
For any $\alpha\ge 1$, there exists a constant $c=c(\alpha)$ such that,
for all $a,b\in\R^N$, $N\in\N$, the following inequality holds true:
\begin{align*}
	|b-a|^\alpha
    \le
    c\big|\power{b}{\alpha} - \power{a}{\alpha}\big|.
\end{align*}
\end{lemma}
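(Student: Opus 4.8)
The plan is to reduce the asserted inequality directly to the upper estimate provided by Lemma~\ref{lem:Acerbi-Fusco}. First I would apply the triangle inequality $|b-a|\le |a|+|b|$. Since $\alpha\ge 1$, the exponent $\alpha-1$ is nonnegative, so raising both sides to the power $\alpha-1$ preserves the inequality and gives $|b-a|^{\alpha-1}\le (|a|+|b|)^{\alpha-1}$; in the borderline case $\alpha=1$ this is the trivial identity $1\le 1$ (with the convention $0^0=1$), and the case $a=b=0$ is trivial because every term vanishes. Multiplying through by $|b-a|\ge 0$ then yields $|b-a|^{\alpha}\le (|a|+|b|)^{\alpha-1}\,|b-a|$.

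It remains to invoke the right-hand bound of Lemma~\ref{lem:Acerbi-Fusco}, applied with the same exponent $\alpha>0$: there is a constant $c=c(\alpha)$ with $(|a|+|b|)^{\alpha-1}|b-a|\le c\,\big|\power{b}{\alpha}-\power{a}{\alpha}\big|$. Combining the two displayed estimates produces $|b-a|^{\alpha}\le c\,\big|\power{b}{\alpha}-\power{a}{\alpha}\big|$ with $c=c(\alpha)$, which is precisely the claim. There is no genuine obstacle in this argument; the only structural point worth noting is that the hypothesis $\alpha\ge 1$ cannot be dropped, since for $0<\alpha<1$ the quantity $\big|\power{b}{\alpha}-\power{a}{\alpha}\big|$ can be made arbitrarily small relative to $|b-a|^\alpha$ (for instance by taking $a,b$ collinear with large norm), so no estimate of this form can hold in that range.
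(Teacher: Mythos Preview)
Your argument is correct and is precisely what the paper has in mind: the paper states that Lemma~\ref{lem:a-b} is ``an immediate consequence of Lemma~\ref{lem:Acerbi-Fusco}'', and your derivation via $|b-a|^{\alpha}\le(|a|+|b|)^{\alpha-1}|b-a|$ together with the upper bound in Lemma~\ref{lem:Acerbi-Fusco} is exactly that immediate consequence.
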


It is well known that mean values over subsets $A\subset B$ are quasi-minimizers of the mapping $
\R^N\ni a\mapsto \int_B |u-a|^p \dx$. The following lemma shows that this also applies to powers 
 $\power{u}{\alpha}$  of $u$, provided  $\alpha\ge\frac1p$.  For $p=2$ and $A=B$, the lemma has been proved in \cite[Lemma 6.2]{Diening-Kaplicky-Schwarzacher}.  The general version is established
in \cite[Lemma~3.5]{BDKS-doubly}. 

\begin{lemma}\label{lem:alphalemma}
For any $p\ge 1$ and $\alpha\ge\frac1p$, there exists a universal constant $c=c(\alpha,p)$ such that whenever $A\subset B\subset \R^k$, $k\in\N$, are two bounded domains with positive measure, then for any  $u \in L^{\alpha p}(B,\R^N)$ and any  $a\in\R^N$, we have 
$$
	\mint_B \big|\power{u}{\alpha}-\power{(u)_A}{\alpha}\big|^p \dx 
	\le 
	\frac{c\,|B|}{|A|} 
	\mint_B \big|\power{u}{\alpha}-\power{a}{\alpha}\big|^p \dx.
$$
\end{lemma}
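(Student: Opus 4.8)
\textbf{Proof plan for Lemma~\ref{lem:alphalemma}.} The plan is to reduce the claim to the known case $A=B$ from \cite[Lemma 6.2]{Diening-Kaplicky-Schwarzacher} (for $p=2$) or \cite[Lemma~3.5]{BDKS-doubly} (general $p$), using the triangle inequality to split off the difference between the two candidate means $\power{(u)_A}{\alpha}$ and $\power{(u)_B}{\alpha}$. First I would write
$$
	\mint_B \big|\power{u}{\alpha}-\power{(u)_A}{\alpha}\big|^p \dx
	\le
	c\mint_B \big|\power{u}{\alpha}-\power{(u)_B}{\alpha}\big|^p \dx
	+
	c\,\big|\power{(u)_B}{\alpha}-\power{(u)_A}{\alpha}\big|^p,
$$
so that the first term is already of the desired form (it is bounded by $c\mint_B|\power{u}{\alpha}-\power{a}{\alpha}|^p\dx$ by the $A=B$ case, a fortiori by $\tfrac{c|B|}{|A|}$ times the same), and only the constant term involving the two means must be controlled.

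For that term I would use Lemma~\ref{lem:Acerbi-Fusco} to pass from the power difference to
$
	\big(|(u)_A|+|(u)_B|\big)^{\alpha-1}\big|(u)_A-(u)_B\big|,
$
and then estimate $|(u)_A-(u)_B|$ by Jensen's inequality: since $(u)_B=\mint_B u\,\dx$ and $(u)_A=\mint_A u\,\dx$, one has the crude bound
$
	\big|(u)_A-(u)_B\big|
	\le
	\mint_A|u-(u)_B|\,\dx
	\le
	\tfrac{|B|}{|A|}\mint_B|u-(u)_B|\,\dx.
$
Combining this with the pointwise control of $|u|^{\alpha-1}|u-(u)_B|$ by $|\power{u}{\alpha}-\power{(u)_B}{\alpha}|$ (again Lemma~\ref{lem:Acerbi-Fusco}) and using $\alpha p\ge1$ to invoke Jensen once more in the exponent $p$, the term is bounded by $\tfrac{c|B|^p}{|A|^p}\mint_B|\power{u}{\alpha}-\power{(u)_B}{\alpha}|^p\dx$; a further application of the $A=B$ quasiminimality replaces $\power{(u)_B}{\alpha}$ by $\power{a}{\alpha}$ at the cost of an absolute constant, giving overall the factor $\tfrac{c|B|^p}{|A|^p}$, which is dominated by $\tfrac{c|B|}{|A|}$ whenever $|B|\ge|A|$ only after one notices $|B|/|A|\ge1$, so $(|B|/|A|)^p\le$ (no — here one must be slightly careful).

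The main obstacle is precisely this last point: the naive estimate produces the factor $(|B|/|A|)^p$ rather than the sharp $|B|/|A|$ claimed in the statement. To recover the linear dependence one should not bound $|(u)_A-(u)_B|$ by pulling the full factor $|B|/|A|$ outside an $L^1$ mean and then raising to the $p$-th power; instead one estimates directly in $L^p$, writing
$
	\big|(u)_A-(u)_B\big|^p
	\le
	\Big(\mint_A|u-(u)_B|\,\dx\Big)^p
	\le
	\mint_A|u-(u)_B|^p\,\dx
	\le
	\tfrac{|B|}{|A|}\mint_B|u-(u)_B|^p\,\dx
$
by Jensen in the outer average and then enlarging the domain of integration from $A$ to $B$ at the cost of the single factor $|B|/|A|$. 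The same care is needed when converting $|u-(u)_B|^p$ to $|\power{u}{\alpha}-\power{(u)_B}{\alpha}|^p$ via Lemma~\ref{lem:Acerbi-Fusco} together with a further application of the already-established $A=B$ quasiminimality to replace $(u)_B$ by the arbitrary point $a$; none of these steps introduces more than absolute constants or a single extra power of $|B|/|A|$, and tracking them honestly yields exactly the stated inequality. The hypothesis $\alpha\ge 1/p$ enters only through Lemma~\ref{lem:Acerbi-Fusco} and the reference results, ensuring the power map interacts well with $L^p$-averages.
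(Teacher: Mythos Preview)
The paper does not give its own proof of this lemma; it simply cites \cite[Lemma~3.5]{BDKS-doubly}. So there is no ``paper proof'' to compare with, and the question is whether your outline stands on its own.

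Your overall strategy is right: split off $\power{(u)_A}{\alpha}$ from a reference value by the triangle inequality and reduce to bounding a single constant term. It is in fact cleaner to compare directly $\power{(u)_A}{\alpha}$ with $\power{a}{\alpha}$ rather than detouring through $\power{(u)_B}{\alpha}$; then the whole lemma follows from
\[
  \big|\power{(u)_A}{\alpha}-\power{a}{\alpha}\big|^p
  \le c\,\mint_A\big|\power{u}{\alpha}-\power{a}{\alpha}\big|^p\dx,
\]
after which enlarging $A$ to $B$ costs exactly one factor $|B|/|A|$.

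The gap is in how you justify this constant-term bound. Your ``corrected'' argument writes $\mint_A|u-(u)_B|^p\dx$, but for $\alpha<1$ we only have $u\in L^{\alpha p}$ with $\alpha p<p$, so this integral need not be finite. More seriously, the promised ``conversion of $|u-(u)_B|^p$ to $|\power{u}{\alpha}-\power{(u)_B}{\alpha}|^p$ via Lemma~\ref{lem:Acerbi-Fusco}'' does not go through: for $\alpha<1$ the inequality $|u-a|^{\alpha}\le c\,|\power{u}{\alpha}-\power{a}{\alpha}|$ is false (try $u=2$, $a=1$, $\alpha=\tfrac12$), and in the other regime the prefactor $(|(u)_A|+|(u)_B|)^{\alpha-1}$ is a fixed number that cannot be matched pointwise with $(|u|+|(u)_B|)^{\alpha-1}$ inside the integral.

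A correct treatment of the constant term uses H\"older rather than a bare Jensen. From Lemma~\ref{lem:Acerbi-Fusco} one has $|u-a|\le c(|u|+|a|)^{1-\alpha}|\power{u}{\alpha}-\power{a}{\alpha}|$; applying H\"older with exponents $p',p$ and noting that $\alpha p\ge 1$ is equivalent to $(1-\alpha)p'\le \alpha p$, one can bound $\mint_A(|u|+|a|)^{(1-\alpha)p'}$ in terms of $\mint_A(|u|+|a|)^{\alpha p}$. A short case distinction on whether $(\mint_A|\power{u}{\alpha}-\power{a}{\alpha}|^p)^{1/p}$ dominates $|a|^\alpha$ or not then closes the estimate with only absolute constants. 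This is where the hypothesis $\alpha\ge 1/p$ is genuinely used, not merely ``through Lemma~\ref{lem:Acerbi-Fusco} and the reference results'' as you wrote.
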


\section{Energy bounds}\label{sec:energy}
In this section we state an energy inequality and a gluing lemma. Both follow with standard arguments
from the weak form \eqref{weak-solution} of the differential equation by testing with suitable testing functions.  Later on, they will be used in the proof of Sobolev-Poincar\'e and reverse H\"older type inequalities.
At this point it should be emphasized, that these two lemmas are the
only places in the proof of the higher integrability where the porous
medium system is utilized. The proof of the energy estimate is along
the lines of \cite[Lemma~3.1]{BDKS-higher-int}, taking into account
\cite[Lemma~2.3\,(i)]{BDKS-higher-int} or
\cite[Lemma~3.4]{BDKS-doubly} and the different definition of scaled
cylinders. The latter means  that the radii $\rho$ and $r$ in
\cite[Lemma~3.1]{BDKS-higher-int} have to be replaced by
$\theta^{\frac{m(m-1)}{1+m}}\rho\,$ and $\theta^{\frac{m(m-1)}{1+m}}r$.

\begin{lemma}\label{lem:energy}
Let $m>0$ and $u$ be a weak solution to \eqref{por-med-eq} in $\Omega_T$ in the sense of Definition~{\upshape\ref{def:weak_solution}}.
Then, on any cylinder $Q_{\rho}^{(\theta)}(z_o)\subseteq\Omega_T$ with
$\rho, \theta>0$, for any $r\in[\rho/2,\rho)$ and any $a \in\R^N$, we have 
\begin{align*}
	& \sup_{t \in \Lambda_r (t_o)} 
 	\mint_{B_r^{(\theta)} (x_o)} 
	\frac{\big|\power{u}{\frac{1+m}{2}}(t) - \power{a}{\frac{1+m}{2}}\big|^2}	
	{r^{\frac{1+m}{m}}} \dx +
	\biint_{Q_r^{(\theta)}(z_o)} |D\power{u}{m}|^2 \dx\dt \\
	&\qquad\leq 
	c\,\biint_{Q_\rho^{(\theta)}(z_o)} 
	\bigg[ 
	\frac{\big|\power{u}{\frac{1+m}{2}}-\power{a}{\frac{1+m}{2}}\big|^2}
	{\rho^{\frac{1+m}{m}}-r^{\frac{1+m}{m}}} + 
	\frac{\big|\power{u}{m} - \power{a}{m}\big|^2}
	{\theta^{\frac{2m(m-1)}{1+m}}(\rho-r)^2} + 
	|F|^{2} \bigg]\dx\dt ,
\end{align*}
where $c=c(m,\nu,L)$.
\end{lemma}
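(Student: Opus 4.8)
The plan is to derive the energy inequality in Lemma~\ref{lem:energy} by testing the weak form \eqref{weak-solution} with a function of the form $\varphi = \eta^2\,\zeta\,(\power{u}{m}-\power{a}{m})$, where $\eta=\eta(x)$ is a cutoff in space supported in $B_\rho^{(\theta)}(x_o)$ with $\eta\equiv 1$ on $B_r^{(\theta)}(x_o)$ and $|D\eta|\lesssim \big(\theta^{\frac{m(m-1)}{1+m}}(\rho-r)\big)^{-1}$, and $\zeta=\zeta(t)$ is a Lipschitz cutoff in time, adapted to the interval $\Lambda_\rho(t_o)$, that will eventually be sent to an indicator of $(t_o-r^{\frac{1+m}{m}},\tau]$ for an arbitrary $\tau\in\Lambda_r(t_o)$. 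Since $u$ has only the regularity $\power{u}{m}\in L^2(0,T;W^{1,2})$ and $u\in C^0((0,T);L^{1+m})$, this test function is not admissible directly; one has to mollify in time (Steklov averages) and pass to the limit, exactly as in \cite[Lemma~3.1]{BDKS-higher-int}. This is standard, so I would only indicate it.

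The key computation is the treatment of the parabolic term $\iint u\cdot\partial_t\varphi$. Using the mollified version and integrating by parts in time, the leading contribution is $-\iint \eta^2\,\zeta'\,\boldsymbol{\mathfrak{b}}[u,a]\,\dx\dt$ plus a boundary term at $t=\tau$, where $\boldsymbol{\mathfrak{b}}[u,a]$ is the ``boundary term'' $\int_a^u (\power{s}{m}-\power{a}{m})\,\d s$ associated with the monotone nonlinearity; the algebraic fact one needs is the two-sided comparison
$$
  \tfrac1c\,\big|\power{u}{\frac{1+m}{2}}-\power{a}{\frac{1+m}{2}}\big|^2
  \le
  \boldsymbol{\mathfrak{b}}[u,a]
  \le
  c\,\big|\power{u}{\frac{1+m}{2}}-\power{a}{\frac{1+m}{2}}\big|^2,
$$
which is precisely \cite[Lemma~2.3\,(i)]{BDKS-higher-int} (equivalently \cite[Lemma~3.4]{BDKS-doubly}). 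This is what converts the natural ``energy'' quantity into the $\power{u}{\frac{1+m}{2}}$-form appearing in the statement, and it is the one genuinely porous-medium-specific ingredient. For the elliptic term, the coercivity in \eqref{growth} gives $\mathbf A\cdot D\power{u}{m}\ge\nu|D\power{u}{m}|^2$ on the good set, while on the cutoff region $D\varphi$ also produces a term $2\eta\,\zeta\,D\eta\otimes(\power{u}{m}-\power{a}{m})$, which together with the growth bound $|\mathbf A|\le L|D\power{u}{m}|$ is estimated by Young's inequality, absorbing $\tfrac\nu2\iint\eta^2\zeta|D\power{u}{m}|^2$ into the left-hand side and leaving the term $\frac{|\power{u}{m}-\power{a}{m}|^2}{\theta^{\frac{2m(m-1)}{1+m}}(\rho-r)^2}$ on the right. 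The inhomogeneity $\iint F\cdot D\varphi$ is handled the same way, contributing $c\iint|F|^2$ together with another absorbable gradient piece.

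Putting these together, for each fixed $\tau\in\Lambda_r(t_o)$ one obtains
$$
  \mint_{B_r^{(\theta)}}\frac{\big|\power{u}{\frac{1+m}{2}}(\tau)-\power{a}{\frac{1+m}{2}}\big|^2}{r^{\frac{1+m}{m}}}\,\dx
  +\biint_{Q_r^{(\theta)}}|D\power{u}{m}|^2\,\dx\dt
  \le
  c\biint_{Q_\rho^{(\theta)}}\bigg[\frac{\big|\power{u}{\frac{1+m}{2}}-\power{a}{\frac{1+m}{2}}\big|^2}{\rho^{\frac{1+m}{m}}-r^{\frac{1+m}{m}}}+\frac{\big|\power{u}{m}-\power{a}{m}\big|^2}{\theta^{\frac{2m(m-1)}{1+m}}(\rho-r)^2}+|F|^2\bigg]\dx\dt,
$$
where the denominator $\rho^{\frac{1+m}{m}}-r^{\frac{1+m}{m}}$ comes from the choice $|\zeta'|\lesssim(\rho^{\frac{1+m}{m}}-r^{\frac{1+m}{m}})^{-1}$ matched to the time-length of $\Lambda_\rho$ versus $\Lambda_r$; taking the supremum over $\tau$ gives the claim. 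The one subtlety, and the step I expect to require the most care, is bookkeeping the powers of the intrinsic parameter $\theta$: all spatial integrations are over $\theta$-scaled balls of radius $\theta^{\frac{m(m-1)}{1+m}}\rho$, so the gradient of the spatial cutoff carries a factor $\theta^{-\frac{m(m-1)}{1+m}}$, which squared gives exactly the $\theta^{-\frac{2m(m-1)}{1+m}}$ weight on the second right-hand term, while the time cutoff is governed by the unscaled length $\rho^{\frac{1+m}{m}}$; verifying that all these exponents are consistent with the substitution $\rho\mapsto\theta^{\frac{m(m-1)}{1+m}}\rho$ in \cite[Lemma~3.1]{BDKS-higher-int} is exactly the ``different definition of scaled cylinders'' remark made before the lemma, and is where a careless sign in the exponent $m(m-1)$ (negative since $m\le 1$) would propagate. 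Everything else is the standard Steklov-averaging-plus-Young's-inequality routine.
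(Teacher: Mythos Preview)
Your proposal is correct and follows essentially the same approach the paper indicates: test with $\eta^2\zeta(\power{u}{m}-\power{a}{m})$ (justified via Steklov averages), use the two-sided bound on the boundary term from \cite[Lemma~2.3\,(i)]{BDKS-higher-int} to rewrite it in terms of $\big|\power{u}{\frac{1+m}{2}}-\power{a}{\frac{1+m}{2}}\big|^2$, and track the $\theta$-scaling in the spatial cutoff to produce the weight $\theta^{-\frac{2m(m-1)}{1+m}}(\rho-r)^{-2}$. This is exactly the argument of \cite[Lemma~3.1]{BDKS-higher-int} with the radius substitution $\rho\mapsto\theta^{\frac{m(m-1)}{1+m}}\rho$, as the paper itself remarks in lieu of a proof.
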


The following lemma serves to compare the slice-wise mean values of a given weak solution at different times. It is often called {\em gluing lemma}. 
Such an assertion is necessary and very useful since Poincar\'e's and Sobolev's inequality can only be applied slice-wise. 
The proof  is exactly as in \cite[Lemma~3.2]{BDKS-higher-int}, taking into account the different definition of scaled cylinders.

\begin{lemma}\label{lem:time-diff}
Let $m>0$ and $u$ be a weak solution to \eqref{por-med-eq} in $\Omega_T$ in the sense of Definition~{\upshape\ref{def:weak_solution}}.
Then, for any cylinder $Q_{\rho}^{(\theta)}(z_o)\subseteq\Omega_T$ with
$\rho,\theta>0$ there exists $\hat\rho\in [\frac{\rho}{2},\rho]$ such that for all $t_1,t_2\in\Lambda_\rho(t_o)$ we have 
\begin{align*}
	\big|\langle u\rangle_{x_o;\hat\rho}^{(\theta)}(t_2) - 
	\langle u\rangle_{x_o;\hat\rho}^{(\theta)}(t_1)\big| 
	&\le
	c\,\theta^{\frac{m(1-m)}{1+m}}\rho^{\frac{1}{m}}
	\biint_{Q_{\rho}^{(\theta)}(z_o)} 
	\big[|D\power{u}{m}| + |F|\big] \dx\dt ,
\end{align*}
for a constant $c=c(L)$. 
\end{lemma}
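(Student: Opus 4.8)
\textbf{Proof proposal for the gluing lemma (Lemma~\ref{lem:time-diff}).}
The plan is to test the weak formulation \eqref{weak-solution} with a function that is essentially the indicator of a ball times a cut-off in time, in order to track the evolution of the slicewise mean $\langle u\rangle_{x_o;\hat\rho}^{(\theta)}(t)$. Concretely, fix the cylinder $Q_{\rho}^{(\theta)}(z_o)$ and abbreviate $B_s:=B_s^{(\theta)}(x_o)$ for $s\in[\tfrac\rho2,\rho]$; let $\zeta_s\in C_0^\infty(B_s)$ be a standard cut-off with $\zeta_s\equiv 1$ on $B_{s/2}$-type core and $|D\zeta_s|\lesssim (\theta^{\frac{m(m-1)}{1+m}}\rho)^{-1}$ (using the definition of the scaled ball, whose radius is $\theta^{\frac{m(m-1)}{1+m}}s$), and let $\chi_\delta$ be a Lipschitz approximation in time of the indicator of $(t_1,t_2)\subset\Lambda_\rho(t_o)$. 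Plugging $\varphi(x,t)=\zeta_s(x)\chi_\delta(t)e_k$ (componentwise) into \eqref{weak-solution} and letting $\delta\downarrow 0$, the parabolic term yields $\int_{B_s}\zeta_s\,(u(t_2)-u(t_1))\,\dx$, while the right-hand side is controlled by $\int_{t_1}^{t_2}\!\int_{B_s}\big(|\mathbf A(x,t,u,D\power um)|+|F|\big)|D\zeta_s|\,\dx\dt$; invoking the growth bound \eqref{growth}$_2$, namely $|\mathbf A|\le L|D\power um|$, this is bounded by $c(L)\,(\theta^{\frac{m(m-1)}{1+m}}\rho)^{-1}\iint_{Q_\rho^{(\theta)}}\big(|D\power um|+|F|\big)\,\dx\dt$ (extending the time integral to all of $\Lambda_\rho(t_o)$ and the spatial integral to $B_\rho$, which only increases the right side).

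The remaining issue is that $\zeta_s$ is not the constant function on $B_s$, so $\int_{B_s}\zeta_s\,u(t)\,\dx$ is not exactly $|B_s|\,\langle u\rangle_{x_o;s}^{(\theta)}(t)$. This is where a mean-value selection of the radius enters: one argues that for a suitable $\hat\rho\in[\tfrac\rho2,\rho]$ the test function can be taken adapted to $B_{\hat\rho}$ in such a way that the discrepancy between $\int \zeta_{\hat\rho}u\,\dx$ and $|B_{\hat\rho}|\langle u\rangle^{(\theta)}_{x_o;\hat\rho}$ is itself controlled by the same type of term. The standard device (exactly as in \cite[Lemma~3.2]{BDKS-higher-int}) is to use a radial cut-off that is $1$ on $B_{\rho/2}$ and decays linearly to $0$ on $\partial B_\rho$; then $\langle u\rangle_{x_o;\hat\rho}^{(\theta)}$ for the level set radius $\hat\rho$ realizing an averaged value of the radial profile of $u$ differs from $|B|^{-1}\int \zeta u$ by a controllable amount, and applying the co-area/averaging argument over $s\in[\tfrac\rho2,\rho]$ produces the admissible $\hat\rho$. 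After dividing by $|B_{\hat\rho}|$ and collecting the powers of $\theta$ and $\rho$—note $|B_{\hat\rho}^{(\theta)}|\approx(\theta^{\frac{m(m-1)}{1+m}}\rho)^{n}$ while $|Q_\rho^{(\theta)}|\approx(\theta^{\frac{m(m-1)}{1+m}}\rho)^{n}\rho^{\frac{1+m}{m}}$, so dividing converts the integral $\iint$ into the average $\biint$ and leaves an extra factor $\rho^{\frac{1+m}{m}}$, and combining with the prefactor $(\theta^{\frac{m(m-1)}{1+m}}\rho)^{-1}$ gives $\theta^{-\frac{m(m-1)}{1+m}}\rho^{\frac{1+m}{m}-1}=\theta^{\frac{m(1-m)}{1+m}}\rho^{\frac1m}$—one arrives at the claimed inequality with $c=c(L)$.

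The part I expect to be the genuine obstacle is the careful justification of the time-derivative step, i.e. turning $\iint u\cdot\partial_t\varphi$ into the difference of slicewise means at two arbitrary time levels $t_1,t_2$; since $\power um\in L^2(0,T;W^{1,2})$ only gives $u\in C^0((0,T);L^{1+m})$ (not $C^0$ into $L^2$), one must mollify in time, pass to the limit using the continuity in the $L^{1+m}$ sense, and verify that all terms survive the limit—this is routine but needs the Steklov-averaging machinery. Everything else (the choice of $\hat\rho$ via averaging, the bookkeeping of the intrinsic scaling exponents, the application of \eqref{growth}) is a direct transcription of \cite[Lemma~3.2]{BDKS-higher-int} with the radii $\rho,r$ there replaced by $\theta^{\frac{m(m-1)}{1+m}}\rho,\theta^{\frac{m(m-1)}{1+m}}r$ as indicated in the paragraph preceding the lemma.
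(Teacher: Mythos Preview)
Your proposal is correct and follows exactly the route the paper takes: the paper's entire proof is the sentence ``exactly as in \cite[Lemma~3.2]{BDKS-higher-int}, taking into account the different definition of scaled cylinders,'' and you both invoke that reference and carry out the scaling bookkeeping (the final computation $\theta^{-\frac{m(m-1)}{1+m}}\rho^{\frac{1+m}{m}-1}=\theta^{\frac{m(1-m)}{1+m}}\rho^{1/m}$ is spot on). One small clarification on the selection of $\hat\rho$: the standard device is not to control a ``discrepancy'' between $\int\zeta u$ and the mean, but rather to first pick $\hat\rho\in[\rho/2,\rho]$ by a mean-value argument so that the surface integral $\int_{\Lambda_\rho}\int_{\partial B_{\hat\rho}^{(\theta)}}[|D\power um|+|F|]\,\d\mathcal H^{n-1}\dt$ is bounded by $(\theta^{\frac{m(m-1)}{1+m}}\rho)^{-1}$ times the volume integral over the annulus, and then test with (a smooth approximation of) $\chi_{B_{\hat\rho}^{(\theta)}}$ itself, so that the parabolic term yields exactly $|B_{\hat\rho}^{(\theta)}|\big(\langle u\rangle_{\hat\rho}^{(\theta)}(t_2)-\langle u\rangle_{\hat\rho}^{(\theta)}(t_1)\big)$ with no error term. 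With that adjustment your sketch is complete.
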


\section{Sobolev-Poincar\'e type inequality}\label{sec:poin}

In this section we consider cylinders
$Q_\rho^{(\theta)}(z_o)\subseteq\Omega_T$, where $\rho,\theta>0$,
which satisfy a {\it sub-intrinsic coupling} in the sense that for some constant
$K\ge 1$ we have
\begin{equation}\label{sub-intrinsic-poincare}
  	\biint_{Q_\rho^{(\theta)}(z_o)} 
  	\frac{|u|^{1+m}}{\rho^{\frac{1+m}m}}\,\dx\dt
  	\le 
   	K \theta^{2m}.
\end{equation}
Furthermore, we assume that either 
\begin{equation}\label{super-intrinsic-poincare}
  	\theta^{2m}
  	\le 
  	K \biint_{Q_\rho^{(\theta)}(z_o)} 
  	\frac{|u|^{1+m}}{\rho^{\frac{1+m}m}}\,\dx\dt
  	\quad\mbox{or}\quad
  	\theta^{2m}
  	\le 
	K \biint_{Q_{\rho}^{(\theta)}(z_o)} 
	\big[|D\power{u}{m}|^2 + |F|^2\big] \dx\dt
\end{equation}
holds true. The principal goal of the section  is to establish the following Sobolev-Poincar\'e type inequality.
This inequality illustrates the significance of the lower bound $m>m_c$, since only in this case, we obtain an integrability exponent $2q<2$ on the right-hand side.

\begin{lemma}\label{lem:poin}
Let $m\in(m_c,1]$ and $u$ be a weak solution to \eqref{por-med-eq} in
$\Omega_T$ in the sense of
Definition~{\upshape\ref{def:weak_solution}}. Then, on any cylinder
$Q_\rho^{(\theta)}(z_o)\subseteq\Omega_T$ satisfying
\eqref{sub-intrinsic-poincare}, with $\rho,\theta>0$, and for any $\eps\in(0,1]$, we have 
\begin{align*}
	&\biint_{Q_\rho^{(\theta)}(z_o)} 
	\frac{\big|\power{u}{\frac{1+m}{2}}-
	(\power{u}{\frac{1+m}{2}})_{z_o;\rho}^{(\theta)}\big|^2}
	{\rho^{\frac{1+m}{m}}}  
	\,\dx\dt \\
	&\quad\le 
  	\epsilon\Bigg[
	\sup_{t\in\Lambda_\rho(t_o)} 
  	\bint_{B_\rho^{(\theta)}(x_o)} 
	\frac{\big|\power{u}{\frac{1+m}{2}}(t)-
	(\power{u}{\frac{1+m}{2}})_{z_o;\rho}^{(\theta)}\big|^2}
	{\rho^{\frac{1+m}{m}}} \,\dx +
	\biint_{Q^{(\theta)}_\rho(z_o)} |D\power um|^2 \,\dx\dt \Bigg]\\
	&\quad \phantom{\le\,}+
	\frac{c}{\epsilon^{\frac{2}{n}}}
	\Bigg[
  	\bigg[\biint_{Q_\rho^{(\theta)}(z_o)} 
	|D\power{u}{m}|^{2q}  \,\dx\dt 
  	\bigg]^{\frac{1}{q}} +
	\biint_{Q_\rho^{(\theta)}(z_o)} 
	|F|^{2}  \,\dx\dt\Bigg]
\end{align*}
for a constant $c=c(n,m,L,K)$. Here the integrability exponent $q$ is given by
\begin{equation}\label{def:q}
  q:=\max\bigg\{\frac{n(1+m)}{2(nm+1+m)},\frac12\bigg\}<1.
\end{equation}
\end{lemma}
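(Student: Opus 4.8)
The plan is to derive the desired estimate by splitting the left-hand side into a ``spatial'' Sobolev–Poincar\'e contribution, handled slicewise, and a ``temporal'' contribution, controlled via the gluing lemma. First I would fix the radius $\hat\rho\in[\rho/2,\rho]$ provided by Lemma~\ref{lem:time-diff}, and for brevity write $a(t):=\langle u\rangle_{x_o;\hat\rho}^{(\theta)}(t)$ for the slicewise means on the gluing ball. The standard decomposition is
\begin{align*}
	\power{u}{\frac{1+m}{2}}(x,t) - \big(\power{u}{\frac{1+m}{2}}\big)_{z_o;\rho}^{(\theta)}
	&= \Big[\power{u}{\frac{1+m}{2}}(x,t) - \power{a(t)}{\frac{1+m}{2}}\Big] \\
	&\quad + \Big[\power{a(t)}{\frac{1+m}{2}} - \big(\power{u}{\frac{1+m}{2}}\big)_{z_o;\rho}^{(\theta)}\Big],
\end{align*}
so after squaring and integrating it suffices to bound the two corresponding integrals.

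For the first (spatial) term I would work at a.e.\ fixed time $t$. Using Lemma~\ref{lem:Acerbi-Fusco} with $\alpha=\frac{1+m}{2}$ one passes from $|\power{u}{\frac{1+m}{2}}-\power{a(t)}{\frac{1+m}{2}}|$ to $(|u|+|a(t)|)^{\frac{m-1}{2}}|u-a(t)|$, which in the fast-diffusion range $m\le 1$ is controlled by $|u|^{\frac{m-1}{2}}$-type weights; then the Sobolev inequality on the ball $B_{\hat\rho}^{(\theta)}(x_o)$ (in the form adapted to powers, i.e.\ applied to $\power{u}{m}$ with exponent tuned so that $\power{u}{m}$ raised to $\frac{1+m}{2m}$ reproduces $\power{u}{\frac{1+m}{2}}$) yields a bound by the spatial gradient $\int |D\power{u}{m}|^{2q}$ times a factor involving $\sup_t \int_{B} |\power{u}{\frac{1+m}{2}}(t)-\power{a(t)}{\frac{1+m}{2}}|^2$; the sub-intrinsic coupling \eqref{sub-intrinsic-poincare} is used here to absorb the $L^{1+m}$-mass of $u$ into powers of $\theta$. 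The choice $q=\max\{\frac{n(1+m)}{2(nm+1+m)},\tfrac12\}$ comes precisely from matching the Sobolev exponent to the exponent $\frac{1+m}{2}$; the constraint $2q<1$ — equivalently the solvability of this matching — is exactly what forces $m>m_c$. Finally Young's inequality splits the product into the $\epsilon$-term (supremum plus energy) and the $c\epsilon^{-2/n}$-term (the $L^{2q}$-gradient norm), possibly after replacing the sup over $B_{\hat\rho}^{(\theta)}$ by the sup over $B_\rho^{(\theta)}$ at the cost of harmless constants, using $\hat\rho\ge\rho/2$.

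For the second (temporal) term, the integrand is independent of $x$, so it equals $\int_{\Lambda_\rho} |\power{a(t)}{\frac{1+m}{2}}-(\power{u}{\frac{1+m}{2}})_{z_o;\rho}^{(\theta)}|^2\,\dt / \rho^{\frac{1+m}{m}}$, and by the quasi-minimality of the mean (Lemma~\ref{lem:alphalemma}) one may replace the genuine space-time mean by $\power{a(t_1)}{\frac{1+m}{2}}$ for a suitable reference time $t_1$, up to constants. Then Lemma~\ref{lem:Acerbi-Fusco} again converts differences of powers into $(|a(t_1)|+|a(t_2)|)^{\frac{m-1}{2}}|a(t_2)-a(t_1)|$, and Lemma~\ref{lem:time-diff} bounds $|a(t_2)-a(t_1)|$ by $\theta^{\frac{m(1-m)}{1+m}}\rho^{1/m}\biint_{Q_\rho^{(\theta)}}[|D\power{u}{m}|+|F|]$. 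After collecting powers of $\rho$ and $\theta$ — where once more \eqref{sub-intrinsic-poincare}, and in the bad cases one of the alternatives \eqref{super-intrinsic-poincare}, is invoked to turn the $\theta$-weights into mean-value integrals of $|u|^{1+m}$, $|D\power{u}{m}|^2$ or $|F|^2$ — this term is absorbed into the right-hand side (the Jensen step bounds $(\biint|D\power{u}{m}|)^2\le (\biint|D\power{u}{m}|^{2q})^{1/q}$ since $2q\ge 1$). The main obstacle is the careful bookkeeping of the $\theta$-exponents coming from the anisotropic balls $B_\rho^{(\theta)}$: one has to check that all scaling factors conspire to cancel exactly, which is where the precise form of the cylinder \eqref{def-Q} and the hypotheses \eqref{sub-intrinsic-poincare}–\eqref{super-intrinsic-poincare} are essential, and where the dichotomy in \eqref{super-intrinsic-poincare} cannot be avoided.
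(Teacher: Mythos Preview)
Your proposal follows essentially the same route as the paper: split via the slicewise means $a(t)=\langle u\rangle_{x_o;\hat\rho}^{(\theta)}(t)$ from the gluing lemma into a spatial part (slicewise Sobolev for $\power{u}{m}$, the sub-intrinsic coupling \eqref{sub-intrinsic-poincare}, then Young) and a temporal part (gluing lemma plus the case distinction based on \eqref{super-intrinsic-poincare}). Two minor slips to fix when you write it out: for the spatial part the clean choice in Lemma~\ref{lem:Acerbi-Fusco} is $\alpha=\tfrac{1+m}{2m}$ applied to $\power{u}{m}$ and its slicewise mean $\langle\power{u}{m}\rangle_\rho^{(\theta)}(t)$ (your $\alpha=\tfrac{1+m}{2}$ produces differences of $u$ rather than of $\power{u}{m}$ and forces an extra conversion before Sobolev), and the constraint coming from $m>m_c$ is $q<1$, not ``$2q<1$''---indeed you correctly invoke $2q\ge1$ a few lines later for the Jensen step.
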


\begin{proof}
Throughout  the proof we omit the center $z_o$ in our notation.  
By $\hat\rho\in [\frac12\rho, \rho]$ we denote the radius introduced in  Lemma \ref{lem:time-diff}.
We start our  considerations by estimating
\begin{align*}
	\biint_{Q_\rho^{(\theta)}} \!
	\frac{\big|\power{u}{\frac{1+m}{2}}-
	(\power{u}{\frac{1+m}{2}})_{\rho}^{(\theta)}\big|^2}
	{\rho^{\frac{1+m}{m}}}  
	\,\dx\dt
    &\le
	\biint_{Q_\rho^{(\theta)}} \!
	\frac{\big|\power{u}{\frac{1+m}{2}}-
	\power{\big[(u)_{\hat\rho}^{(\theta)}\big]}{\frac{1+m}{2}}\big|^2}
	{\rho^{\frac{1+m}{m}}}  
	\,\dx\dt 
    \le 
    2[\mathrm{I}+\mathrm{II}].
\end{align*}
Here we have abbreviated
\begin{align*}
	\mathrm{I}
    &:=
    \biint_{Q_\rho^{(\theta)}}
	\frac{\big|\power{u}{\frac{1+m}{2}}-
	\power{\big[\langle u\rangle_{\hat\rho}^{(\theta)}(t)\big]}{\frac{1+m}{2}}
	\big|^2}
    {\rho^{\frac{1+m}{m}}}\,\dx\dt,\\[7pt]
    \mathrm{II}
    &:=
    \bint_{\Lambda_\rho}
    \frac{\big|\power{\big[\langle u\rangle_{\hat\rho}^{(\theta)}(t)\big]}{\frac{1+m}{2}}-
    \power{\big[(u)_{\hat\rho}^{(\theta)}\big]}{\frac{1+m}{2}}\big|^2}
    {\rho^{\frac{1+m}{m}}}\,\dt.
\end{align*}
The first term can be estimated with Young's inequality and Lemma~\ref{lem:alphalemma}. We obtain
\begin{align}\label{estimate-of-I}
    \mathrm{I}
    &\le
    \frac{1}{\rho^{\frac{1+m}m}}
    \sup_{t\in\Lambda_\rho}
    \bigg[\bint_{B_\rho^{(\theta)}} 
    \big|\power{u}{\frac{1+m}{2}}-
	\power{\big[\langle u\rangle_{\hat\rho}^{(\theta)}(t)\big]}{\frac{1+m}{2}}
	\big|^2 \,\dx \bigg]^{\frac{2}{n+2}} \nonumber\\
	&\qquad\cdot
	\bint_{\Lambda_\rho}
    \bigg[\bint_{B_\rho^{(\theta)}} 
    \big|\power{u}{\frac{1+m}{2}}-
	\power{\big[\langle u\rangle_{\hat\rho}^{(\theta)}(t)\big]}{\frac{1+m}{2}}
	\big|^2 \,\dx
    \bigg]^{\frac{n}{n+2}} \dt \nonumber\\
    &\le
    \eps \sup_{t\in\Lambda_\rho}
    \bint_{B_\rho^{(\theta)}}
    \frac{\big|\power{u}{\frac{1+m}{2}}(t)-
	(\power{u}{\frac{1+m}{2}})_{\rho}^{(\theta)}\big|^2}
	{\rho^{\frac{1+m}m}} \,\dx
    +
    \frac{c}{\eps^{\frac2n}\rho^{\frac{1+m}m}} \,
    \mathrm{III}^{\frac{n+2}{n}} ,
\end{align}
where $c=c(n,m)$ and 
$$
	\mathrm{III}
	:=
	\bint_{\Lambda_\rho}
    \bigg[\bint_{B_\rho^{(\theta)}} 
    \big|\power{u}{\frac{1+m}{2}}-
	\power{\big[\langle u^m\rangle_{\rho}^{(\theta)}(t)\big]}{\frac{1+m}{2m}}
	\big|^2 \,\dx
    \bigg]^{\frac{n}{n+2}} \dt.
$$
If $m<1$ we estimate the integral $\mathrm{III}$ by means of Lemma~\ref{lem:Acerbi-Fusco} with $\alpha=\frac{1+m}{2m}$ and H\"older's inequality  in
 space with exponents $\frac{1+m}{1-m}$ and $\frac{1+m}{2m}$, which yields
\begin{align*}
	\mathrm{III}
    &\le
	\bint_{\Lambda_\rho}
    \bigg[\bint_{B_\rho^{(\theta)}} 
    \big(|\power{u}{m}|+|\langle \power{u}{m}\rangle_\rho^{(\theta)}(t)|\big)^{\frac{1-m}{m}}
    \big|\power{u}{m}-\langle\power{u}{m}\rangle_\rho^{(\theta)}(t)\big|^2 \,\dx
    \bigg]^{\frac{n}{n+2}} \dt \\
    &\le
    \bint_{\Lambda_\rho}
    \bigg[\bint_{B_\rho^{(\theta)}}
    |u|^{1+m}
    \,\dx\bigg]^{\frac{n}{n+2}\frac{1-m}{1+m}}
    \bigg[\bint_{B_\rho^{(\theta)}}
    \big|\power{u}{m}-\langle\power{u}{m}\rangle_\rho^{(\theta)}(t)\big|^{\frac{1+m}{m}} \,\dx
	\bigg]^{\frac{n}{n+2}\frac{2m}{1+m}} \dt.
\end{align*}
To proceed further, we recall the definition of  $q$. Now, again in
the case $m<1$, we apply H\"older's inequality in time with exponents $\frac{(n+2)(1+m)}{n(1-m)}$ and $\frac{(n+2)(1+m)}{2(nm+1+m)}\le\frac{q(n+2)}{n}$, the sub-intrinsic coupling \eqref{sub-intrinsic-poincare} and Sobolev's inequality on the time slices. This leads to
\begin{align*}
	\mathrm{III}
    &\le
    \bigg[
    \biint_{Q_\rho^{(\theta)}}|u|^{1+m} \,\dx\dt
    \bigg]^{\frac{n(1-m)}{(n+2)(1+m)}}\\
	&\qquad\qquad\cdot
    \Bigg[\bint_{\Lambda_\rho}
    \bigg[\bint_{B_\rho^{(\theta)}}
    \big|\power{u}{m}-\langle\power{u}{m}\rangle_\rho^{(\theta)}(t)\big|^{\frac{1+m}{m}} \,\dx
    \bigg]^{\frac{2mq}{1+m}}\dt\Bigg]^{\frac{n}{q(n+2)}}\\
    &\le
    c\,\big(\theta^{2m}\rho^{\frac{1+m}{m}}\big)^{\frac{n(1-m)}{(n+2)(1+m)}}
    \big(\theta^{\frac{m(m-1)}{1+m}}\rho\big)^{\frac{2n}{n+2}}
    \bigg[\biint_{Q_\rho^{(\theta)}}
	|D\power{u}{m}|^{2q} \,\dx\dt
	\bigg]^{\frac{n}{q(n+2)}}\\
    &=
    c\Bigg[\rho^{\frac{1+m}{m}}
    \bigg[\biint_{Q_\rho^{(\theta)}}|D\power{u}{m}|^{2q}
    \,\dx\dt\bigg]^{\frac{1}{q}} \Bigg]^{\frac{n}{n+2}},
\end{align*}
where $c=c(n,m,K)$.   
Note that this inequality also holds true for $m=1$. In this case we directly  apply Sobolev's inequality on the time slices.  In any case, 
the combination of the last inequality with \eqref{estimate-of-I}
yields 
\begin{align*}
	\mathrm{I}
    \le
    \eps \sup_{t\in\Lambda_\rho}
    \bint_{B_\rho^{(\theta)}}
    \frac{\big|\power{u}{\frac{1+m}{2}}(t)-
	(\power{u}{\frac{1+m}{2}})_{\rho}^{(\theta)}\big|^2}
	{\rho^{\frac{1+m}m}}\,\dx
  	+
  	\frac{c}{\epsilon^{\frac{2}{n}}}
  	\bigg[\biint_{Q_\rho^{(\theta)}} 
  	|D\power{u}{m}|^{2q} \,\dx\dt 
  	\bigg]^{\frac{1}{q}}.
\end{align*}
It remains to estimate $\mathrm{II}$. To this end,
we use the fact $\frac{1+m}{2}\le 1$ in Lemma \ref{lem:a-b} and the gluing Lemma \ref{lem:time-diff} to deduce 
\begin{align}\label{est-II-1}
     \mathrm{II}
     &\le
     c\,
     \bint_{\Lambda_\rho}
     \frac{\big|\langle u\rangle_{\hat\rho}^{(\theta)}(t)-(u)_{\hat\rho}^{(\theta)}\big|^{1+m}}
     {\rho^{\frac{1+m}{m}}}\,\dt \nonumber\\
     &\le
     \frac{c}{\rho^{\frac{1+m}m}}
     \bint_{\Lambda_\rho}\bint_{\Lambda_\rho}
     \big|\langle u\rangle_{\hat\rho}^{(\theta)}(t)-\langle u\rangle_{\hat\rho}^{(\theta)}(\tau)\big|^{1+m}
     \,\dt\d\tau \nonumber\\
     &\le
     c\,\theta^{m(1-m)}
     \bigg[\biint_{Q_\rho^{(\theta)}}\big[|D\power um|+|F|\big] \dx\dt
     \bigg]^{1+m}
\end{align}
for a constant $c=c(m,L)$.
If either~\eqref{super-intrinsic-poincare}$_2$ is satisfied or if $m=1$, then we have 
\begin{align*}
     \mathrm{II}
     &\le
     c\,\bigg[\biint_{Q_\rho^{(\theta)}}\big[|D\power um|^2+|F|^2\big] \dx\dt
     \bigg]^{\frac{1-m}{2}}
     \bigg[\biint_{Q_\rho^{(\theta)}}\big[|D\power um|+|F|\big] \dx\dt
     \bigg]^{1+m} \\
     &\le
     \epsilon\,\biint_{Q_\rho^{(\theta)}}|D\power um|^2 \dx\dt +
     \frac{c}{\epsilon^{\frac{1-m}{1+m}}}\Bigg[
     \bigg[\biint_{Q_\rho^{(\theta)}}|D\power um|^{2q} \dx\dt
     \bigg]^{\frac1q} +
     \biint_{Q_\rho^{(\theta)}} |F|^2 \dx\dt\Bigg],
\end{align*}
where the constant $c$ depends only on $m$, $L$ and  $K$.
Together with the estimate for $\mathrm{I}$, this proves the asserted
inequality. Note that $\frac{1-m}{1+m}<\frac{2}{n}$ since $m>m_c$. Otherwise, if $m<1$ and~\eqref{super-intrinsic-poincare}$_1$ is satisfied, then we argue as follows. First, observe that
\begin{align*}
     \theta^{2m}
     &\le
     2K\,\biint_{Q_\rho^{(\theta)}}
     \frac{\big|\power{u}{\frac{1+m}{2}}-
     \power{\big[(u)_{\hat\rho}^{(\theta)}\big]}{\frac{1+m}{2}}\big|^2}
     {\rho^{\frac{1+m}m}}\,\dx\dt
     +
     \frac{2K\,\big|(u)_{\hat\rho}^{(\theta)}\big|^{1+m}}{\rho^{\frac{1+m}{m}}} \,.
\end{align*}
Therefore, we have
\begin{align*}
     \mathrm{II}
     &=
     \frac{\theta^{\frac{2m(1-m)}{1+m}}\, \mathrm{II}}{\theta^{\frac{2m(1-m)}{1+m}}}
     \le
     c\big[\mathrm{II}_1+\mathrm{II}_2\big],
\end{align*}
with
\begin{align*}
     \mathrm{II}_1
     &:=
     \frac{1}{\theta^{\frac{2m(1-m)}{1+m}}}
     \Bigg[\biint_{Q_\rho^{(\theta)}}
     \frac{\big|\power{u}{\frac{1+m}{2}}-
     \power{\big[(u)_{\hat\rho}^{(\theta)}\big]}{\frac{1+m}{2}}\big|^2}
     {\rho^{\frac{1+m}m}}\,\dx\dt
     \Bigg]^{\frac{1-m}{1+m}}\cdot\mathrm{II}, \\[7pt]
     \mathrm{II}_2
     &:=
     \frac{\big|(u)_{\hat\rho}^{(\theta)}\big|^{1-m}}{\theta^{\frac{2m(1-m)}{1+m}}\rho^{\frac{1-m}{m}}}\cdot\mathrm{II}.
\end{align*}
To estimate $\mathrm{II}_1$, we apply in turn~\eqref{est-II-1}, assumption \eqref{sub-intrinsic-poincare}, Lemma~\ref{lem:alphalemma} and
Young's inequality with exponents $\frac{2}{1-m}$, $\frac{2}{1+m}$. This gives
\begin{align*}
	\mathrm{II}_1
    &\le
    \frac{c}{\theta^{\frac{m(1-m)^2}{1+m}}}
    \Bigg[\biint_{Q_\rho^{(\theta)}}
    \frac{\big|\power{u}{\frac{1+m}{2}}-
    \power{\big[(u)_{\hat\rho}^{(\theta)}\big]}{\frac{1+m}{2}}\big|^2}
    {\rho^{\frac{1+m}m}}\,\dx\dt
    \Bigg]^{\frac{1-m}{1+m}}\\
    &\qquad\qquad\qquad\qquad\cdot
    \bigg[
    \biint_{Q_\rho^{(\theta)}}\big[|D\power um|+|F|\big]\dx\dt
    \bigg]^{1+m}\\
    &\le
    c\Bigg[\biint_{Q_\rho^{(\theta)}}
    \frac{\big|\power{u}{\frac{1+m}{2}}-
    \power{\big[(u)_{\hat\rho}^{(\theta)}\big]}{\frac{1+m}{2}}\big|^2}
    {\rho^{\frac{1+m}m}}\,\dx\dt
    \Bigg]^{\frac{1-m}{2}}
    \bigg[
    \biint_{Q_\rho^{(\theta)}}\big[|D\power um|+|F|\big]\dx\dt
    \bigg]^{1+m}\\
    &\le
    \tfrac12
    \biint_{Q_\rho^{(\theta)}}
    \frac{\big|\power{u}{\frac{1+m}{2}}-
    (\power{u}{\frac{1+m}{2}})_{\rho}^{(\theta)}\big|^2}
    {\rho^{\frac{1+m}{m}}}\,\dx\dt
    +
    c\bigg[\biint_{Q_\rho^{(\theta)}}\big[|D\power um|+|F|\big]\d x\d t
    \bigg]^2,
\end{align*}
with a constant $c=c(m,L,K)$.
For the term $\mathrm{II}_2$, we proceed as follows. We first insert the expression
for the term $\mathrm{II}$, then use Lemma \ref{lem:Acerbi-Fusco} with $\alpha:=\frac{2}{1+m}$, and
finally apply the gluing Lemma~\ref{lem:time-diff}. This leads to
 \begin{align*}
     \mathrm{II}_2
     &\le
     \frac{c}{\theta^{\frac{2m(1-m)}{1+m}}\rho^{\frac2m}}\,
     \bint_{\Lambda_\rho}
     \big|(u)_{\hat\rho}^{(\theta)}\big|^{1-m}
     \Big|\power{\big[\langle u\rangle_{\hat\rho}^{(\theta)}(t)\big]}{\frac{1+m}{2}}-
     \power{\big[(u)_{\hat\rho}^{(\theta)}\big]}{\frac{1+m}{2}}\Big|^2
     \,\dt\\
     &\le
     \frac{c}{\theta^{\frac{2m(1-m)}{1+m}}\rho^{\frac2m}}\,
     \bint_{\Lambda_\rho}
     \big|\langle u\rangle_{\hat\rho}^{(\theta)}(t)-(u)_{\hat\rho}^{(\theta)}\big|^2
     \,\dt\\
     &\le
     \frac{c}{\theta^{\frac{2m(1-m)}{1+m}}\rho^{\frac2m}}\,
     \bint_{\Lambda_\rho}\bint_{\Lambda_\rho}
     \big|\langle u\rangle_{\hat\rho}^{(\theta)}(t)-\langle u\rangle_{\hat\rho}^{(\theta)}(\tau)\big|^2
     \,\dt\d\tau\\
     &\le
     c\bigg[
     \biint_{Q^{(\theta)}_\rho} \big[|D\power um|+|F|\big] \dx\dt\bigg]^2,
\end{align*}
again with  a constant $c=c(m,L,K)$.
Collecting the estimates for $\mathrm{I}$, $\mathrm{II}_1$, and $\mathrm{II}_2$, we arrive at
\begin{align*}
	\biint_{Q_\rho^{(\theta)}} &
    \frac{\big|\power{u}{\frac{1+m}{2}}-
    (\power{u}{\frac{1+m}{2}})_{\rho}^{(\theta)}\big|^2}
    {\rho^{\frac{1+m}{m}}} \,\dx\dt \\
    &\le
    \epsilon\sup_{t\in\Lambda_\rho} 
 	\bint_{B_\rho^{(\theta)}} \frac{\big|\power{u}{\frac{1+m}{2}}(t)-
    (\power{u}{\frac{1+m}{2}})_{\rho}^{(\theta)}\big|^2}
    {\rho^{\frac{1+m}{m}}} \dx  
  	+
  	\frac{c}{\epsilon^{\frac{2}{n}}}
  	\bigg[\biint_{Q_\rho^{(\theta)}} 
  	|D\power{u}{m}|^{2q} \dx \dt 
  	\bigg]^{\frac{1}{q}}\\
  	&\phantom{\le\,} +
    \tfrac12 \biint_{Q_\rho^{(\theta)}}
	\frac{\big|\power{u}{\frac{1+m}{2}}-
    (\power{u}{\frac{1+m}{2}})_{\rho}^{(\theta)}\big|^2}
    {\rho^{\frac{1+m}{m}}} \,\dx\dt +
    c\bigg[
    \biint_{Q^{(\theta)}_\rho} \big[|D\power um|+|F|\big] \dx\dt
	\bigg]^2.
\end{align*}
Re-absorbing the second last term into the left-hand side, and applying in turn H\"older's inequality
we again obtain the asserted Sobolev-Poincar\'e inequality. 
\end{proof}

\section{Reverse H\"older inequality}\label{sec:revholder}

The core of any proof of higher integrability of the gradient is a reverse H\"older inequality.
In this section we establish such an inequality on certain  intrinsic cylinders. 
Throughout this section we assume that $Q_{2\rho}^{(\theta)}(z_o)\subseteq\Omega_T$ with $\rho,\theta> 0$ is a scaled cylinder satisfying a sub-intrinsic coupling 
\begin{equation}\label{sub-intrinsic}
  	\biint_{Q_{2\rho}^{(\theta)}(z_o)} 
  	\frac{|u|^{1+m}}{(2\rho)^{\frac{1+m}m}}\,\dx\dt
  	\le 
   	K \theta^{2m},
\end{equation}
for some constant $K\ge 1$. Furthermore, we assume that either 
\begin{equation}\label{super-intrinsic}
  	\theta^{2m}
  	\le 
  	K \biint_{Q_\rho^{(\theta)}(z_o)} \!
  	\frac{|u|^{1+m}}{\rho^{\frac{1+m}m}}\,\dx\dt
  	\quad\mbox{or}\quad
  	\theta^{2m}
  	\le 
	K \biint_{Q_{\rho}^{(\theta)}(z_o)} \!
	\big[|D\power{u}{m}|^2 + |F|^2\big] \dx\dt.
\end{equation}
This specifies the setup for the following reverse H\"older inequality.

\begin{proposition}\label{prop:revhoelder}
Let $m\in(m_c,1]$ and $u$ be a weak solution to \eqref{por-med-eq} in $\Omega_T$ in the sense of Definition~{\upshape\ref{def:weak_solution}}. 
Then, on any  cylinder $Q_{2\rho}^{(\theta)}(z_o)\subseteq\Omega_T$ with $\rho,\theta>0$ satisfying \eqref{sub-intrinsic} and \eqref{super-intrinsic}, we have
\begin{align*}
	\biint_{Q_{\rho}^{(\theta)}(z_o)} |D\power{u}{m}|^2 \dx\dt 
	\le
	c\bigg[\biint_{Q_{2\rho}^{(\theta)}(z_o)} 
	|D\power{u}{m}|^{2q} \dx\dt \bigg]^{\frac{1}{q}} +
	c\, \biint_{Q_{2\rho}^{(\theta)}(z_o)} |F|^{2} \dx\dt ,
\end{align*}
for a constant $c=c(n,m,\nu, L,K)$. Here, $q<1$ 
is the integrability exponent from \eqref{def:q}.

\end{proposition}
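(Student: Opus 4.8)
The plan is to prove the reverse Hölder inequality by the standard Caccioppoli-plus-Sobolev-Poincaré scheme, but carried out within the intrinsic geometry $Q_\rho^{(\theta)}(z_o)$. Fix concentric cylinders $Q_r^{(\theta)}\subseteq Q_s^{(\theta)}\subseteq Q_{2\rho}^{(\theta)}$ with $\rho\le r<s\le 2\rho$ and apply the energy estimate of Lemma~\ref{lem:energy} with the choice $a=(u)_{\hat s}^{(\theta)}$, where $\hat s$ is the radius from the gluing Lemma~\ref{lem:time-diff}; this controls both $\sup_t \bint_{B_r^{(\theta)}}|\power{u}{\frac{1+m}{2}}(t)-\power{a}{\frac{1+m}{2}}|^2/r^{\frac{1+m}{m}}$ and $\biint_{Q_r^{(\theta)}}|D\power{u}{m}|^2$ by the three terms on the right of Lemma~\ref{lem:energy}. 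The crucial point is that the cylinder $Q_{2\rho}^{(\theta)}$ satisfies \eqref{sub-intrinsic}, hence all intermediate cylinders satisfy a sub-intrinsic coupling of the form \eqref{sub-intrinsic-poincare} (with a comparable constant), so the Sobolev-Poincaré inequality of Lemma~\ref{lem:poin} is applicable on each of them. I would use it to estimate the first two numerator terms from Lemma~\ref{lem:energy}: the term $|\power{u}{\frac{1+m}{2}}-\power{a}{\frac{1+m}{2}}|^2$ is exactly what Lemma~\ref{lem:poin} bounds, and the term $|\power{u}{m}-\power{a}{m}|^2$ should be handled by Hölder's inequality in space combined with Lemma~\ref{lem:Acerbi-Fusco} and the sub-intrinsic coupling \eqref{sub-intrinsic}, converting powers of $|u|^{1+m}$ into the scaling factors $\theta^{2m}$ that cancel against $\theta^{\frac{2m(m-1)}{1+m}}$ appearing in the denominator.

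The output of this step is an inequality of the form
\begin{align*}
	\sup_{t}\bint_{B_r^{(\theta)}}\!\frac{|\power{u}{\frac{1+m}{2}}(t)-\power{a}{\frac{1+m}{2}}|^2}{r^{\frac{1+m}{m}}}\dx
	+\biint_{Q_r^{(\theta)}}\!|D\power{u}{m}|^2\dx\dt
	\le
	\eps\Big[\text{same quantity on }Q_s^{(\theta)}\Big]
	+\frac{c}{(s-r)^{\gamma}}\Big[\text{good terms}\Big],
\end{align*}
where the good terms are $[\biint_{Q_{2\rho}^{(\theta)}}|D\power{u}{m}|^{2q}\dx\dt]^{1/q}+\biint_{Q_{2\rho}^{(\theta)}}|F|^2\dx\dt$, and $\gamma>0$ is some exponent produced by the quotients $\rho^{\frac{1+m}{m}}-r^{\frac{1+m}{m}}$ and $(\rho-r)^2$ in Lemma~\ref{lem:energy}. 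Here it is important that the alternative \eqref{super-intrinsic} is used precisely to absorb the leftover $\theta$-powers: when \eqref{super-intrinsic}$_2$ holds, $\theta^{2m}$ is directly bounded by the energy plus $F$ on $Q_\rho^{(\theta)}$, and one re-absorbs; when \eqref{super-intrinsic}$_1$ holds, one argues as in the proof of Lemma~\ref{lem:poin} (splitting via $\mathrm{II}_1,\mathrm{II}_2$) to trade the factor $|(u)_{\hat\rho}^{(\theta)}|$ against the Poincaré quantity. Finally I would apply the iteration Lemma~\ref{lem:tech} to the function $\phi(r):=\sup_t\bint_{B_r^{(\theta)}}(\cdots)+\biint_{Q_r^{(\theta)}}|D\power{u}{m}|^2\dx\dt$ on the interval $[\rho,2\rho]$, which removes the $\eps$-coupling and yields
\begin{align*}
	\biint_{Q_\rho^{(\theta)}}|D\power{u}{m}|^2\dx\dt
	\le c\bigg[\biint_{Q_{2\rho}^{(\theta)}}|D\power{u}{m}|^{2q}\dx\dt\bigg]^{1/q}
	+c\biint_{Q_{2\rho}^{(\theta)}}|F|^2\dx\dt,
\end{align*}
which is the claim.

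I expect the main obstacle to be the careful bookkeeping of the scaling factors $\theta^{\frac{m(m-1)}{1+m}}$, especially in controlling the second numerator term $|\power{u}{m}-\power{a}{m}|^2/(\theta^{\frac{2m(m-1)}{1+m}}(\rho-r)^2)$ in Lemma~\ref{lem:energy}: one must convert this, using Lemma~\ref{lem:Acerbi-Fusco} with $\alpha=\frac{2m}{1+m}$ (when $m<1$), Hölder in space, and the sub-intrinsic bound \eqref{sub-intrinsic}, into a quantity of the form $\theta^{2m}\cdot(\text{Poincaré quantity})$ with the $\theta$-powers matching so that they can be eliminated through \eqref{super-intrinsic}. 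A secondary technical nuisance is that $\hat s$ depends on $s$, so the monotone dependence of $\phi$ on its argument has to be set up correctly (one works with a fixed increasing family of radii and uses that $Q_{\hat s}^{(\theta)}\subseteq Q_s^{(\theta)}\subseteq Q_{2\rho}^{(\theta)}$, together with measure-density comparisons absorbed into $K$). Beyond these points the argument is routine and parallels the corresponding step in \cite{BDKS-higher-int}.
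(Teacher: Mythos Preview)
Your proposal is correct and follows essentially the same route as the paper: energy estimate on nested cylinders $Q_r^{(\theta)}\subset Q_s^{(\theta)}$, conversion of the $|\power{u}{m}-\power{a}{m}|^2$ term into the $\frac{1+m}{2}$-Poincar\'e quantity, elimination of the residual factor $\theta^{\frac{2m(1-m)}{1+m}}$ via the dichotomy \eqref{super-intrinsic} (with the $\mathrm{II}_1/\mathrm{II}_2$ splitting under \eqref{super-intrinsic}$_1$), application of Lemma~\ref{lem:poin}, and then the iteration Lemma~\ref{lem:tech}.

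Two minor remarks. First, the paper takes $a$ so that $\power{a}{\frac{1+m}{2}}=(\power{u}{\frac{1+m}{2}})_r^{(\theta)}$ rather than $a=(u)_{\hat s}^{(\theta)}$; this makes the $\sup$-term in the energy estimate match directly the quantity being iterated, and avoids invoking the gluing lemma here at all (it is already absorbed into Lemma~\ref{lem:poin}), so your ``secondary technical nuisance'' about $\hat s$ depending on $s$ simply does not arise. Second, the conversion of the $|\power{u}{m}-\power{a}{m}|^2$ term is cleaner via Lemma~\ref{lem:a-b} with exponent $\frac{1+m}{2m}\ge1$ (giving $|\power{u}{m}-\power{a}{m}|^2\le c\,|\power{u}{\frac{1+m}{2}}-\power{a}{\frac{1+m}{2}}|^{\frac{4m}{1+m}}$) followed by H\"older in space-time, rather than Lemma~\ref{lem:Acerbi-Fusco} with $\alpha=\frac{2m}{1+m}<1$, which would introduce a negative power of $|u|+|a|$.
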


\begin{proof} 
We omit the reference to the center $z_o$ in the notation and consider radii $r,s$ with $\rho\le r<s\le 2\rho$. 
Note that hypothesis \eqref{sub-intrinsic} and \eqref{super-intrinsic} imply that the coupling conditions
\eqref{sub-intrinsic-poincare} and \eqref{super-intrinsic-poincare} are
satisfied on $Q_s^{(\theta)}$ with constant  $2^{n+2+\frac2m}K$
instead of $K$. 
From the energy estimate in Lemma \ref{lem:energy}, we obtain  with a constant $c=c(m,\nu,L)$ that
\begin{align*}
	&\sup_{t \in \Lambda_r}
	\mint_{B_r^{(\theta)}} 
	\frac{\big|\power{u}{\frac{1+m}{2}}(t) - 
	(\power{u}{\frac{1+m}{2}})_r^{(\theta)}\big|^2}{r^{\frac{1+m}{m}}} \dx +
	\biint_{Q_r^{(\theta)}} |D\power{u}{m}|^2 \dx\dt \nonumber\\
	&\quad\le
	c\,\biint_{Q_s^{(\theta)}}
	\frac{\big|\power{u}{\frac{1+m}{2}} - 
	(\power{u}{\frac{1+m}{2}})_r^{(\theta)}\big|^2}
	{s^{\frac{1+m}{m}}-r^{\frac{1+m}{m}}}  \dx\dt +
	c\,\biint_{Q_s^{(\theta)}} 
	\frac{\big|\power{u}{m} - 
	\power{\big[(\power{u}{\frac{1+m}{2}})_r^{(\theta)}\big]}{\frac{2m}{1+m}}
	\big|^2}
	{\theta^{\frac{2m(m-1)}{1+m}}(s-r)^2} \dx\dt
	\nonumber\\
	&\quad\quad +
	c\, \biint_{Q_{s}^{(\theta)}} |F|^{2}\dx\dt \nonumber\\
	&\quad =:
	\mbox{I} + \mbox{II} + \mbox{III},
\end{align*}
where the meaning of $\mathrm I$, $\mathrm{II}$ and $\mathrm{III}$ is clear in this context. We let
\begin{equation*}
  \mathcal R_{r,s}
  :=
  \frac{s}{s-r}.
\end{equation*}
To estimate the term $\mathrm I$ we first observe that $(s-r)^{\frac{1+m}{m}} \le s^{\frac{1+m}{m}}-r^{\frac{1+m}{m}}$. This, together with an application of Lemma~\ref{lem:alphalemma} implies
\begin{align*}
	\mbox{I}
	&\le
	c\,\mathcal R_{r,s}^{\frac{1+m}m}
	\biint_{Q_s^{(\theta)}} 
	\frac{\big|\power{u}{\frac{1+m}{2}} - 
	(\power{u}{\frac{1+m}{2}})_s^{(\theta)}\big|^2}
	{s^{\frac{1+m}{m}}}  \dx\dt ,
\end{align*}
again with a constant $c$ depending on $m,\nu ,L$ only.
We now turn our attention to the term $\mbox{II}$, which we re-write as
\begin{align*}
	\mbox{II}
	=
	c\,\mathcal R_{r,s}^{2} \theta^{\frac{2m(1-m)}{1+m}}
	\biint_{Q_s^{(\theta)}} 
	\frac{\big|\power{u}{m} - 
	\power{\big[(u^{\frac{1+m}{2}})_r^{(\theta)}\big]}{\frac{2m}{1+m}}
	\big|^2}
	{s^2} \dx\dt .
\end{align*}
If \eqref{super-intrinsic}$_2$ is satisfied, we apply Lemma~\ref{lem:a-b}, H\"older's inequality, Lemma~\ref{lem:alphalemma} and Young's inequality to obtain for $\epsilon\in(0,1]$ that
\begin{align*}
	\mbox{II}
	&\le 
	c\mathcal R_{r,s}^{2} \theta^{\frac{2m(1-m)}{1+m}}
	\Bigg[\biint_{Q_s^{(\theta)}} 
	\frac{\big|\power{u}{\frac{1+m}{2}} - 
	(\power{u}{\frac{1+m}{2}})_r^{(\theta)}\big|^2}
	{s^{\frac{1+m}{m}}} \dx\dt\Bigg]^{\frac{2m}{1+m}} \\
	&\le
	c\mathcal R_{r,s}^{2} \bigg[
	\biint_{Q_{\rho}^{(\theta)}}\! \!
	\big[|D\power{u}{m}|^2 + |F|^2\big] \dx\dt \bigg]^{\frac{1-m}{1+m}}
	\Bigg[\biint_{Q_s^{(\theta)}}\! \!
	\frac{\big|\power{u}{\frac{1+m}{2}} - 
	(\power{u}{\frac{1+m}{2}})_s^{(\theta)}\big|^2}
	{s^{\frac{1+m}{m}}} \dx\dt\Bigg]^{\frac{2m}{1+m}} \\
	&\le
	\epsilon\mathcal R_{r,s}^{2}
	\biint_{Q_{s}^{(\theta)}} 
	\big[|D\power{u}{m}|^2 + |F|^2\big] \dx\dt +
	\frac{c\,\mathcal R_{r,s}^{2}}{\epsilon^{\frac{1-m}{2m}}}
	\biint_{Q_s^{(\theta)}} 
	\frac{\big|\power{u}{\frac{1+m}{2}} - 
	(\power{u}{\frac{1+m}{2}})_s^{(\theta)}\big|^2}
	{s^{\frac{1+m}{m}}} \dx\dt 
\end{align*}
with $c=c(m,\nu,L,K)$. In the case $m=1$, this estimate follows even
without an application of Young's inequality.
Otherwise, if~\eqref{super-intrinsic}$_1$ is in force, we have that
\begin{align*}
  	\theta^{2m}
  	&\le
  	2^{n+2+\frac2m}K 
	\biint_{Q_s^{(\theta)}} 
  	\frac{|u|^{1+m}}{s^{\frac{1+m}m}}\,\dx\dt \\
	&\le
	c\, 
  	\biint_{Q_s^{(\theta)}} 
  	\frac{\big|\power{u}{\frac{1+m}{2}}-
  	(\power{u}{\frac{1+m}{2}})_{r}^{(\theta)}\big|^{2}}
  	{s^{\frac{1+m}m}}\,\dx\dt +
  	\frac{c\,\big|(\power{u}{\frac{1+m}{2}})_r^{(\theta)}\big|^2}
  	{s^{\frac{1+m}m}} .
\end{align*}
This leads to
\begin{align*}
	\mbox{II} 
	\le
	c\,\mathcal R_{r,s}^{2} [\mathrm{II}_1 + \mathrm{II}_2],
\end{align*}
where we have set
\begin{equation*}
	\mathrm{II}_1
	:=
	\Bigg[\biint_{Q_s^{(\theta)}}\!\!\!
  	\frac{\big|\power{u}{\frac{1+m}{2}}-
  	(\power{u}{\frac{1+m}{2}})_{r}^{(\theta)}\big|^{2}}
  	{s^{\frac{1+m}m}} \dx\dt\Bigg]^{\frac{1-m}{1+m}} 
	\biint_{Q_s^{(\theta)}} \!\!\!
	\frac{\big|\power{u}{m} - 
	\power{\big[(u^{\frac{1+m}{2}})_r^{(\theta)}\big]}{\frac{2m}{1+m}}\big|^2}
	{s^2} \dx\dt
\end{equation*}
and
\begin{align*}
	\mathrm{II}_2
	&:=
	\frac{\big|(\power{u}{\frac{1+m}{2}})_r^{(\theta)}\big|^{\frac{2(1-m)}{1+m}}}
	{s^{\frac{1-m}{m}}} 
	\biint_{Q_s^{(\theta)}} \!\!\!
	\frac{\big|\power{u}{m} - 
	\power{\big[(u^{\frac{1+m}{2}})_r^{(\theta)}\big]}{\frac{2m}{1+m}}\big|^2}
	{s^2} \dx\dt .
\end{align*}
To term $\mathrm{II}_1$ we apply in turn  Lemma~\ref{lem:a-b}, H\"older's inequality and Lemma~\ref{lem:alphalemma}, and obtain
\begin{align*}
	\mathrm{II}_1
	\le
	c\, \biint_{Q_s^{(\theta)}} 
  	\frac{\big|\power{u}{\frac{1+m}{2}}-
  	(\power{u}{\frac{1+m}{2}})_{r}^{(\theta)}\big|^{2}}
  	{s^{\frac{1+m}m}}\,\dx\dt
	\le
	c\, \biint_{Q_s^{(\theta)}} 
  	\frac{\big|\power{u}{\frac{1+m}{2}}-
  	(\power{u}{\frac{1+m}{2}})_{s}^{(\theta)}\big|^{2}}
  	{s^{\frac{1+m}m}}\,\dx\dt,
\end{align*}
while to term $\mathrm{II}_2$ we apply  Lemma~\ref{lem:Acerbi-Fusco} with $\alpha =\frac{1+m}{2m}$ and Lemma~\ref{lem:alphalemma} and find
\begin{align*}
	\mathrm{II}_2
	\le
	c\, \biint_{Q_s^{(\theta)}} 
	\frac{\big|\power{u}{\frac{1+m}{2}} - 
	(\power{u}{\frac{1+m}{2}})_r^{(\theta)}\big|^2}
	{s^{\frac{1+m}{m}}} \dx\dt 
	\le
	c\, \biint_{Q_s^{(\theta)}} 
	\frac{\big|\power{u}{\frac{1+m}{2}} - 
	(\power{u}{\frac{1+m}{2}})_s^{(\theta)}\big|^2}
	{s^{\frac{1+m}{m}}} \dx\dt .
\end{align*}
Combining both cases we have
\begin{align*}
	\mbox{II}
	&\le
	\epsilon\,\mathcal R_{r,s}^{2}
	\biint_{Q_{s}^{(\theta)}} 
	\big[|D\power{u}{m}|^2 + |F|^2\big] \dx\dt +
	\frac{c\,\mathcal R_{r,s}^{2}}{\epsilon^{\frac{1-m}{2m}}}
	\biint_{Q_s^{(\theta)}} 
	\frac{\big|\power{u}{\frac{1+m}{2}}-
	(\power{u}{\frac{1+m}{2}})_{s}^{(\theta)}\big|^2}{s^{\frac{1+m}{m}}}  
	\dx\dt,
\end{align*}
with a constant $c=c(m,\nu,L,K)$. 
Inserting the estimates for I and II above and applying Lemma~\ref{lem:poin} with $\epsilon$ replaced by $\epsilon^{\frac{1+m}{2m}}$, we find for any $\epsilon\in(0,1]$ that 
\begin{align*}
	\sup_{t \in \Lambda_r}&
	\mint_{B_r^{(\theta)}} 
	\frac{\big|\power{u}{\frac{1+m}{2}}(t) - 
	(\power{u}{\frac{1+m}{2}})_r^{(\theta)}\big|^2}{r^{\frac{1+m}{m}}} \dx +
      \biint_{Q_r^{(\theta)}} |D\power{u}{m}|^2 \dx\dt \nonumber\\
      &\le
	\frac{c\,\mathcal R_{r,s}^{\frac{1+m}{m}}}{\epsilon^{\frac{1-m}{2m}}}
	\biint_{Q_s^{(\theta)}} 
	\frac{\big|\power{u}{\frac{1+m}{2}}-
	(\power{u}{\frac{1+m}{2}})_{s}^{(\theta)}\big|^2}{s^{\frac{1+m}{m}}}  
        \dx\dt\\
       &\qquad +
        \epsilon\,\mathcal R_{r,s}^{2}
	\biint_{Q_{s}^{(\theta)}} 
	|D\power{u}{m}|^2 \dx\dt
        +
        c\mathcal R_{r,s}^{2}
	\biint_{Q_{s}^{(\theta)}} 
	|F|^2\dx\dt
	\\
	&\le
	c\,\epsilon\mathcal R_{r,s}^{\frac{1+m}{m}}
	\Bigg[
	\sup_{t \in \Lambda_s}
	\mint_{B_s^{(\theta)}} 
	\frac{\big|\power{u}{\frac{1+m}{2}}(t) - 
	(\power{u}{\frac{1+m}{2}})_s^{(\theta)}\big|^2}{s^{\frac{1+m}{m}}} \dx +
	\biint_{Q_s^{(\theta)}} |D\power{u}{m}|^2 \dx\dt\Bigg]\\
	&\qquad +
        \frac{c\,\mathcal R_{r,s}^{\frac{1+m}{m}}}
	{\epsilon^{\frac{(1+m)(n+2)}{2nm}-1}}
	\Bigg[
	\bigg[\biint_{Q_s^{(\theta)}} 
	|D\power{u}{m}|^{2q} \dx \dt 
	\bigg]^{\frac{1}{q}}
	 +
	\biint_{Q^{(\theta)}_s} |F|^2 \dx\dt \Bigg].
\end{align*}
Here we choose  $\epsilon=1/[2c\mathcal R_{r,s}^{\frac{1+m}{m}}]$. With this choice
the first term on the right-hand side tuns into $\frac12[\dots ]$, where $[\dots ]$ is the expression
from the left-hand side with $r$ replaced by $s$. Moreover, the pre-factor in front of the second term on the right-hand side changes to $\mathcal R_{r,s}^{\alpha}$ with $\alpha=\frac{n+2}{2n}(\frac{1+m}m)^2$. To this inequality we apply the Iteration Lemma \ref{lem:tech} to  re-absorb the term $\frac12[\dots]$ (with radius $s$) from the right-hand side into the left. This leads to the claimed reverse H\"older type inequality
\begin{align*}
	\sup_{t \in \Lambda_\rho}&
	\mint_{B_\rho^{(\theta)}} 
	\frac{\big|\power{u}{\frac{1+m}{2}}(t) - 
	(\power{u}{\frac{1+m}{2}})_\rho^{(\theta)}\big|^2}
	{\rho^{\frac{1+m}{m}}} \dx +
	\biint_{Q_\rho^{(\theta)}} |D\power{u}{m}|^2 \dx\dt \\
	&\le
	c\bigg[\biint_{Q_{2\rho}^{(\theta)}} 
	|D\power{u}{m}|^{2q} \dx\dt \bigg]^{\frac{1}{q}} +
	c\,\biint_{Q_{2\rho}^{(\theta)}} |F|^{2} \dx\dt ,
\end{align*}
and finishes the proof.
\end{proof}

At the end of this section we provide a technical auxiliary result, which essentially is a direct consequence
of Lemma~\ref{lem:poin} and the energy estimate.

\begin{lemma}\label{lem:theta}
Let $m\in(m_c,1]$ and $u$ be a weak solution to \eqref{por-med-eq} in $\Omega_T$ in the sense of Definition~{\upshape\ref{def:weak_solution}}.
Then, on any  cylinder $Q_{2\rho}^{(\theta)}(z_o)\subseteq\Omega_T$ with $\rho,\theta>0$ satisfying \eqref{sub-intrinsic} and \eqref{super-intrinsic}$_1$ with $K=1$, we have
\begin{align*}
  	\theta^m
  	\le
          \tfrac1{\sqrt2}
        \Bigg[\biint_{Q_{\rho/2}^{(\theta)}(z_o)}
    \frac{|u|^{1+m}}{(\rho/2)^{\frac{1+m}{m}}} \,\dx\dt 
    \Bigg]^{\frac12} +
  	c\,\bigg[\biint_{Q_{2\rho}^{(\theta)}(z_o)} 
   	\big[|D\power{u}{m}|^{2}+|F|^2\big] \dx\dt\bigg]^{\frac12} ,
\end{align*}
where $c=c(n,m,\nu,L)$.
\end{lemma}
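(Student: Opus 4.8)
The strategy is to combine the super-intrinsic bound \eqref{super-intrinsic}$_1$ (with $K=1$) with the Sobolev--Poincar\'e inequality from Lemma~\ref{lem:poin} and the energy estimate from Lemma~\ref{lem:energy}, exploiting the fact that $\power{u}{\frac{1+m}{2}}$ is the natural quantity appearing in both. Starting from \eqref{super-intrinsic}$_1$, I would first pass from the cylinder $Q_{\rho}^{(\theta)}$ to the smaller cylinder $Q_{\rho/2}^{(\theta)}$ by splitting $|u|^{1+m}$ as $|\power{u}{\frac{1+m}{2}}|^{2}$ and writing, for an arbitrary choice of subtracted mean value $a=(u)^{(\theta)}_{z_o;\hat\rho}$ (or $a=0$),
\begin{align*}
  \theta^{2m}
  &\le
  \biint_{Q_{\rho/2}^{(\theta)}}\frac{|u|^{1+m}}{(\rho/2)^{\frac{1+m}m}}\,\dx\dt
  \le
  2\biint_{Q_{\rho/2}^{(\theta)}}
  \frac{\big|\power{u}{\frac{1+m}{2}}-(\power{u}{\frac{1+m}{2}})^{(\theta)}_{z_o;\rho/2}\big|^2}{(\rho/2)^{\frac{1+m}m}}\,\dx\dt
  +2\,\frac{\big|(\power{u}{\frac{1+m}{2}})^{(\theta)}_{z_o;\rho/2}\big|^2}{(\rho/2)^{\frac{1+m}m}}.
\end{align*}
The second term on the right, after taking square roots, will contribute exactly the leading term $\tfrac1{\sqrt2}\big[\biint_{Q_{\rho/2}^{(\theta)}}\frac{|u|^{1+m}}{(\rho/2)^{\frac{1+m}m}}\big]^{1/2}$ in the claimed estimate (using Jensen/Cauchy--Schwarz to bound the mean by the mean of the squares, and being careful that the numerical constants multiply out to give the stated $\tfrac1{\sqrt2}$ with no extra factor). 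The oscillation term is the one to be absorbed into the gradient.

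\textbf{Key steps.} Next I would estimate the oscillation term. Apply Lemma~\ref{lem:poin} on $Q_{\rho/2}^{(\theta)}$ — noting that \eqref{sub-intrinsic} on $Q_{2\rho}^{(\theta)}$ implies the sub-intrinsic coupling \eqref{sub-intrinsic-poincare} on $Q_{\rho/2}^{(\theta)}$ with a larger constant — with $\epsilon$ chosen to be a fixed small absolute number. This bounds the spatial-time oscillation of $\power{u}{\frac{1+m}{2}}$ by $\epsilon$ times the supremum of the slicewise oscillation plus $\epsilon$ times $\biint|D\power{u}{m}|^{2}$, plus a $c\,\epsilon^{-2/n}$ times the term involving $\big[\biint|D\power um|^{2q}\big]^{1/q}$ and $\biint|F|^2$. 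Then I would absorb the $\sup_t$-oscillation using the energy estimate Lemma~\ref{lem:energy} on $Q_{\rho/2}^{(\theta)}\subseteq Q_{\rho}^{(\theta)}$ (with $a$ the same mean value), which controls $\sup_t \mint_{B_{\rho/2}^{(\theta)}}\frac{|\power{u}{\frac{1+m}{2}}(t)-\power a{\frac{1+m}2}|^2}{(\rho/2)^{\frac{1+m}m}}$ by $\biint_{Q_{\rho}^{(\theta)}}$ of the oscillation of $\power{u}{\frac{1+m}{2}}$ over $\rho^{\frac{1+m}m}-(\rho/2)^{\frac{1+m}m}$, the oscillation of $\power um$ over $\theta^{\frac{2m(m-1)}{1+m}}(\rho/2)^2$, and $|F|^2$; the first of these is a comparable oscillation term, while the $\power um$-oscillation is handled as in the proof of Proposition~\ref{prop:revhoelder} (via Lemma~\ref{lem:a-b}, Lemma~\ref{lem:alphalemma} and the bound $\biint|D\power um|^{2q}$, using \eqref{super-intrinsic}$_1$ again to dominate the $\theta^{\frac{2m(m-1)}{1+m}}$ factor). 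Since $q<1$, the term $\big[\biint|D\power um|^{2q}\big]^{1/q}$ can finally be bounded by $\biint|D\power um|^{2}$ via Hölder's inequality, so after re-absorbing all oscillation terms one gets $\theta^{2m}\le \tfrac12\,[\text{the mean-value term above}] + c\,\biint_{Q_{2\rho}^{(\theta)}}[|D\power um|^2+|F|^2]$; taking square roots and using $\sqrt{a+b}\le\sqrt a+\sqrt b$ yields the claim.

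\textbf{Main obstacle.} The delicate point is bookkeeping the numerical constant in front of the mean-value term, which must come out to be \emph{exactly} $\tfrac1{\sqrt2}$ rather than merely some constant $c\le 1$; this forces the argument to be arranged so that this term is carried through untouched (no Young's inequality applied to it, no absorption, just Cauchy--Schwarz to pass from the mean of $|u|^{1+m}$ to the square of the mean of $\power{u}{\frac{1+m}{2}}$), while all the genuinely estimated pieces land in the gradient/$F$ term. A secondary technical nuisance is that, as in Lemma~\ref{lem:poin}, one must split into the cases $m=1$ and $m<1$ and, in the latter, track the negative powers of $\theta$ appearing in the $\power um$-oscillation, discharging them by a second use of \eqref{super-intrinsic}$_1$ exactly as done for term $\mathrm{II}$ in the proof of Proposition~\ref{prop:revhoelder}; this is routine but requires the hypothesis \eqref{super-intrinsic}$_1$ (not $_2$), which is why the lemma is stated only for that alternative.
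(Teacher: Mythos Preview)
Your overall strategy---split $\theta^{2m}$ into an oscillation term plus a mean-value term, control the oscillation via Lemma~\ref{lem:poin} and Lemma~\ref{lem:energy}, and carry the mean-value term through untouched---is the same as the paper's. However, there is a genuine gap in the re-absorption step, and a typo in your display that obscures the issue.

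First, the display is wrong as written: the super-intrinsic hypothesis~\eqref{super-intrinsic}$_1$ gives $\theta^{2m}\le \biint_{Q_\rho^{(\theta)}}\frac{|u|^{1+m}}{\rho^{(1+m)/m}}$, not the corresponding bound on $Q_{\rho/2}^{(\theta)}$; the latter simply does not follow. Presumably you meant to start on $Q_\rho^{(\theta)}$ and subtract the mean over $Q_{\rho/2}^{(\theta)}$ (which is what the paper does), and indeed with that correction the coefficient bookkeeping works out to $\tfrac1{\sqrt2}$ as you claim.

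The real problem is the step where you ``re-absorb all oscillation terms.'' You propose to apply Lemma~\ref{lem:energy} with $a$ equal to a mean value; this bounds the $\sup_t$-term by oscillation integrals on the \emph{larger} cylinder $Q_\rho^{(\theta)}$ (or $Q_{2\rho}^{(\theta)}$), and those cannot be absorbed back into the oscillation on the smaller cylinder without an iteration as in Proposition~\ref{prop:revhoelder}. But that iteration invokes Lemma~\ref{lem:tech}, which multiplies \emph{every} term on the right by a constant $c(\alpha,\vartheta)>1$---including the mean-value term you are trying to preserve with coefficient exactly $\tfrac1{\sqrt2}$. So the precise constant would be lost, and the lemma as stated would not follow.

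The paper avoids this entirely by applying the energy estimate with $a=0$. Then the right-hand side of Lemma~\ref{lem:energy} consists of $\biint\frac{|u|^{1+m}}{\rho^{(1+m)/m}}$ and $\biint\frac{|u|^{2m}}{\theta^{2m(m-1)/(1+m)}\rho^2}$ (plus $|F|^2$), and both are bounded directly by $c\,\theta^{2m}$ via the sub-intrinsic coupling~\eqref{sub-intrinsic} and H\"older's inequality. This yields $\theta^m\le c\sqrt{\epsilon}\,\theta^m + [\text{mean on }Q_{\rho/2}]^{1/2} + c\,\epsilon^{-1/n}[\ldots]$, and choosing $\epsilon$ so that $c\sqrt{\epsilon}=1-2^{-1/(2m)}$ absorbs the first term into the left while multiplying the mean term by exactly $2^{1/(2m)}$; converting the denominator from $\rho$ to $\rho/2$ then produces the factor $\tfrac1{\sqrt2}$. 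No iteration lemma is needed, and the constant survives intact.
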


\begin{proof}
We omit the reference to the center $z_o$ in the notation. 
We use \eqref{super-intrinsic}$_1$ with $K=1$, Minkowski's inequality and Lemma~\ref{lem:alphalemma} to deduce
\begin{align*}
    \theta^{m}
    &\le
    \bigg[\biint_{Q_{\rho}^{(\theta)}} 
    \frac{|u|^{1+m}}{\rho^{\frac{1+m}m}} \,\dx\dt\bigg]^{\frac12} \\
    &\le 
    \Bigg[\biint_{Q_{\rho}^{(\theta)}}  
    \frac{\big|\power{u}{\frac{1+m}{2}} -
    (\power{u}{\frac{1+m}{2}})_{\rho/2}^{(\theta)}\big|^{2}}
    {\rho^{\frac{1+m}{m}}}
    \,\dx\dt
    \Bigg]^{\frac12} +
    \frac{\babs{(\power{u}{\frac{1+m}{2}})_{\rho/2}^{(\theta)}}}
    {\rho^{\frac{1+m}{2m}}} \\
    &\le 
    c\,\Bigg[\biint_{Q_{\rho}^{(\theta)}}  
    \frac{\big|\power{u}{\frac{1+m}{2}} -
    (\power{u}{\frac{1+m}{2}})_{\rho}^{(\theta)}\big|^{2}}
    {\rho^{\frac{1+m}{m}}}
    \,\dx\dt
    \Bigg]^{\frac12} +
    \bigg[\biint_{Q_{\rho/2}^{(\theta)}}
    \frac{|u|^{1+m}}{\rho^{\frac{1+m}{m}}} \,\dx\dt 
    \bigg]^{\frac12}.
\end{align*}
We estimate the first term on the right with Lemma~\ref{lem:poin} and H\"older's inequality and get 
\begin{align*}
    \biint_{Q_{\rho}^{(\theta)}} 
  	\frac{\big|\power{u}{\frac{1+m}{2}} - 
  	(\power{u}{\frac{1+m}{2}})_{\rho}^{(\theta)}\big|^{2}}
	{\rho^{\frac{1+m}{m}}} \,\dx\dt 
	&\le
   	\epsilon
   	\sup_{t\in\Lambda_{\rho}} 
   	\bint_{B_{\rho}^{(\theta)}} 
   	\frac{\big|\power{u}{\frac{1+m}{2}}(t)-
   	(\power{u}{\frac{1+m}{2}})_{\rho}^{(\theta)}\big|^2}
   	{\rho^{\frac{1+m}{m}}} \,\dx \\
   	&\quad+
   	\frac{c}{\epsilon^{\frac{2}{n}}}
   	\biint_{Q_{\rho}^{(\theta)}} 
   	\big[|D\power{u}{m}|^{2}+|F|^2\big]  \dx\dt , 
\end{align*}
for a constant $c=c(n,m,L)$ and an arbitrary $\epsilon\in(0,1]$. 
In order to bound the $\sup$-term appearing in the last estimate, we
apply the energy estimate from Lemma \ref{lem:energy},
 combined with Lemma \ref{lem:alphalemma} with $a=0$, H\"older's inequality and hypothesis \eqref{sub-intrinsic},
 with the result
\begin{align*}
    &\sup_{t\in\Lambda_{\rho}}
   	\bint_{B_{\rho}^{(\theta)}} 
   	\frac{\big|\power{u}{\frac{1+m}{2}}(t)-
   	(\power{u}{\frac{1+m}{2}})_{\rho}^{(\theta)}\big|^2}
 	{\rho^{\frac{1+m}{m}}} \,\dx\\
    &\qquad\le
    c\,\biint_{Q_{2\rho}^{(\theta)}}
    \bigg[\frac{|u|^{1+m}}{\rho^{\frac{1+m}{m}}} +
    \frac{|u|^{2m}+\big|(\power{u}{\frac{1+m}{2}})_\rho^{(\theta)}\big|^{\frac{4m}{1+m}}}{\theta^{\frac{2m(m-1)}{1+m}}\rho^2} +
	|F|^{2}\bigg]\dx\dt\\
    &\qquad\le
    c\,\biint_{Q_{2\rho}^{(\theta)}}
    \frac{|u|^{1+m}}{\rho^{\frac{1+m}{m}}} \,\dx\dt +
    c\,\bigg[\biint_{Q_{2\rho}^{(\theta)}}
    \frac{\theta^{1-m}|u|^{1+m}}{\rho^{\frac{1+m}{m}}} \,\dx\dt\bigg]^{\frac{2m}{1+m}} +
    c\,\biint_{Q_{2\rho}^{(\theta)}} 
	|F|^{2} \,\dx\dt \\
    &\qquad\le
    c\,\theta^{2m} +
    c\,\biint_{Q_{2\rho}^{(\theta)}} 
	|F|^{2} \,\dx\dt,
\end{align*}
where $c=c(m,\nu,L)$.       
Joining the preceding inequalities leads us to 
\begin{align*}
  	\theta^m
  	\le
  	c\,\sqrt{\epsilon}\, \theta^m +
	\bigg[\biint_{Q_{\rho/2}^{(\theta)}}
    \frac{|u|^{1+m}}{\rho^{\frac{1+m}{m}}} \,\dx\dt 
    \bigg]^{\frac12} +
  	\frac{c}{\epsilon^{\frac{1}{n}}}
   	\bigg[\biint_{Q_{2\rho}^{(\theta)}} 
   	\big[|D\power{u}{m}|^{2}+|F|^2\big] \dx\dt\bigg]^{\frac12} .
\end{align*}
After choosing $\epsilon=\epsilon(n,m,L)\in(0,1]$ so small that
$c\sqrt{\epsilon}\le
  1-2^{-\frac{1}{2m}}
$, we can re-absorb the first term of
the right-hand side into the left. 
In this way, we obtain 
\begin{align*}
  	\theta^m
  	\le
          2^{\frac{1}{2m}}
        \bigg[\biint_{Q_{\rho/2}^{(\theta)}}
    \frac{|u|^{1+m}}{\rho^{\frac{1+m}{m}}} \,\dx\dt 
    \bigg]^{\frac12} +
  	c\,\bigg[\biint_{Q_{2\rho}^{(\theta)}} 
   	\big[|D\power{u}{m}|^{2}+|F|^2\big] \dx\dt\bigg] ,
\end{align*}
which proves the asserted inequality. 
\end{proof}

\section{Proof of the higher integrability}\label{sec:hi}

% As we have seen in the last chapter, one can establish  reverse H\"older inequalities in both the degenerate and the non-degenerate regime. It should be recalled, however, that the cylinders on which these reverse H\"older inequalities are valid, are essentially scaled by the solution  $u$. More precisely, the relationship between $\biint_{Q_{\rho}^{(\theta)}(z_o)} \frac{\abs{u}^{2m}}{\rho^2}\dx\dt$, the scaling parameter $\theta$ and $\biint_{Q_{\rho}^{(\theta)}(z_o)} |D\power{u}{m}|^2  \dx\dt$ plays the decisive role.  Therefore, the main objective in the
% proof of the higher integrability theorem is to find parabolic cylinders covering the super-level set of the spatial gradient of $\power{u}{m}$ in the sense of a Vitali-type covering, such that on each
% cylinder either a coupling in the form of \eqref{intrinsic-RH} or in the form of 
% \eqref{intrinsic-RH-*} holds true. These cylinders will be constructed by some sort of  {\it stopping time argument}, combined with a {\it rising sun type construction}.
% This very nice idea, which has already been explained in the
% introduction, goes back to~\cite{Gianazza-Schwarzacher}. Once the
% covering has been constructed by means of such cylinders, the
% application of the reverse H\"older inequalities leads to a quantitative estimate of $|D\power{u}{m}|^2$ on the super-level sets in terms of $ |D\power{u}{m}|^{2q}$,
% for $q=\frac nd<1$. The decay in terms of the super-level sets  can then be converted into the higher integrability
% of $D\power{u}{m}$.

We consider a fixed cylinder
$$
	Q_{8R}(y_o,\tau_o)
	\equiv
	B_{8R}(y_o)\times \big(\tau_o-(8R)^\frac{1+m}{m},\tau_o+(8R)^\frac{1+m}{m}\big)
	\subseteq\Omega_T
$$
with $R>0$. Again, we omit the center in the notation and write
$Q_{\rho}:=Q_{\rho}(y_o,\tau_o)$ for short, for any radius
$\rho\in(0,8R]$. We consider a parameter
\begin{equation}\label{lambda-0}
  \lambda_o
  \ge
  1+\bigg[\biint_{Q_{4R}} 
  \frac{\abs{u}^{1+m}}{(4 R)^{\frac{1+m}m}}\d x\d t
  \bigg]^{\frac{d}{2m}},
\end{equation}
which will be fixed later. Recall that the {\it scaling deficit} $d$ is defined in \eqref{def:d}.
We again point out that the assumption $m>m_c$ ensures
$d>0$. Furthermore, for $n\ge 2$ the scaling deficit blows up
when $m\downarrow m_c$. 
For $z_o\in Q_{2R}$, $\rho\in(0,R]$, and $\theta\ge 1$, we consider
space-time cylinders $Q_\rho^{(\theta)}(z_o)$ as defined in
\eqref{def-Q}. Note that these cylinders depend monotonically on $\theta$
in the sense that $Q_\rho^{(\theta_2)}(z_o)\subset
Q_\rho^{(\theta_1)}(z_o)$ whenever $1\le \theta_1<\theta_2$,
and that $Q_\rho^{(\theta)}(z_o)\subset Q_{4R}$ for $z_o\in Q_{2R}$, $\rho\in(0,R]$, and $\theta\ge 1$.

\subsection{Construction of a non-uniform system of cylinders}\label{sec:cylinders}
The following construction of a non-uniform system of cylinders is similar to the one in \cite{Gianazza-Schwarzacher, Schwarzacher}. Let $z_o\in Q_{2R}$. For a radius $\rho\in (0,R]$ we define
$$
	\widetilde\theta_\rho
	\equiv
	\widetilde\theta_{z_o;\rho}
	:=
	\inf\bigg\{\theta\in[\lambda_o,\infty):
	\frac{1}{|Q_\rho|}
	\iint_{Q^{(\theta)}_\rho(z_o)} 
	\frac{\abs{u}^{1+m}}{\rho^{\frac{1+m}m}} \dx\dt 
	\le 
	\theta^{\frac{2m}{d}} \bigg\}.
$$
We note  that
$\widetilde\theta_\rho$ is well defined, since the
infimum in the definition is taken over a non-empty set. In fact,
in the limit $\theta\to\infty$ the integral on the left-hand side
converges to zero (and is constant in the case $m=1$, respectively), while the right-hand side grows 
with speed $\theta^{\frac{2m}{d}}$.
The choice of the exponent on the right-hand side becomes more clear after taking means in the
integral condition, since then the condition takes the form 
$$
  \biint_{Q^{(\theta)}_\rho(z_o)}
  \frac{\abs{u}^{1+m}}{\rho^{\frac{1+m}m}}\dx\dt 
  \le 
  \theta^{2m};
$$
compare Sections~\ref{sec:poin} and~\ref{sec:revholder}. As an immediate consequence of the definition of
$\widetilde\theta_\rho$, we either have 
$$
	\widetilde\theta_\rho=\lambda_o
	\quad\mbox{and}\quad
	\biint_{Q_{\rho}^{(\widetilde\theta_\rho)}(z_o)}
    \frac{\abs{u}^{1+m}}{\rho^{\frac{1+m}m}} \dx\dt
	\le
	\widetilde\theta_\rho^{2m}
	=
	\lambda_o^{2m},
$$
or
\begin{equation}\label{theta>lambda}
	\widetilde\theta_\rho>\lambda_o
	\quad\mbox{and}\quad
	\biint_{Q_{\rho}^{(\widetilde\theta_\rho)}(z_o)} 
	\frac{\abs{u}^{1+m}}{\rho^{\frac{1+m}m}}\dx\dt
	=
	\widetilde\theta_\rho^{2m}.
\end{equation}
In the case $\rho=R$,
we have $\widetilde \theta_{R}\ge \lambda_o\ge 1$.
Moreover, in the case $\widetilde\theta_{R}>\lambda_o$, property
\eqref{theta>lambda},
the inclusion $Q_{R}^{(\widetilde\theta_{R})}(z_o)\subset Q_{4R}$ and \eqref{lambda-0} yield that
\begin{align*}
	\widetilde\theta_{R}^{\frac{2m}{d}}
	&=
	\frac{1}{|Q_{R}|}
	\iint_{Q_{R}^{(\widetilde\theta_{R})}(z_o)}
	\frac{\abs{u}^{1+m}}{R^{\frac{1+m}m}} \dx\dt\\ 
	&\le
	\frac{4^{\frac{1+m}m}}{|Q_{R}|}
	\iint_{Q_{4R}} \frac{\abs{u}^{1+m}}{(4R)^{\frac{1+m}m}}\d x\d t
	\le 
	4^{n+2+\frac2m}\lambda_o^{\frac{2m}{d}},
\end{align*}
from which we infer  the bound
\begin{align}\label{bound-theta-R}
  \widetilde\theta_{R} 
  \le
  % 4^{\frac{d[m(n+2)+2]}{2m^2}}
  % \lambda_o
  % =
  4^{\frac d{2m}(n+2+\frac2m)}\lambda_o.
\end{align}     
Our next goal is to prove the continuity of the mapping $(0,R]\ni\rho\mapsto
\widetilde\theta_\rho$.
For $\rho\in(0,R]$ and  $\eps>0$, we abbreviate 
$\theta_+:=\widetilde\theta_\rho+\eps$. We first observe
that 
there exists $\delta=\delta(\eps,\rho)>0$ such that
for all radii $r\in(0,R]$ with $|r-\rho|<\delta$ there holds
\begin{equation}\label{claim-cont}
  \frac{1}{|Q_r|}
  \iint_{Q_{r}^{(\theta_+)}(z_o)} \frac{\abs{u}^{1+m}}{r^{\frac{1+m}m}} \dx\dt 
  <
  \theta_+^{\frac{2m}{d}}.
\end{equation}
In fact, if 
$r=\rho$, this is a consequence of the definition of
$\widetilde\theta_\rho$, since $\theta_+^{\frac{2m}{d}}>\widetilde\theta_\rho^{\frac{2m}{d}}$.
By the absolute continuity of the integral, the inequality
\eqref{claim-cont} continues to hold for radii $r$ sufficiently close
to $\rho$. Hence, the definition of $\widetilde\theta_r$ implies that
$\widetilde\theta_r<\theta_+=\widetilde\theta_\rho+\eps$, provided $|r-\rho|<\delta$. 
For the corresponding lower bound
$\widetilde\theta_r>\theta_-:=\widetilde\theta_\rho-\eps$,
we proceed similarly.
First, we note that we can assume $\theta_-\ge\lambda_o$ and hence $\widetilde\theta_\rho>\lambda_o$, since
otherwise the claim immediately follows  from the property
$\widetilde\theta_r\ge\lambda_o$.
Now, we claim that 
\begin{equation}\label{claim-cont-2}
  	\frac{1}{|Q_r|}
	\iint_{Q_{r}^{(\theta_-)}(z_o)} \frac{\abs{u}^{1+m}}{r^{\frac{1+m}m}} \dx\dt 
  	>
  	\theta_-^{\frac{2m}{d}}
\end{equation}
for all $r\in(0,R]$ with $|r-\rho|<\delta$, after diminishing
$\delta=\delta(\eps,\rho)>0$ if necessary.
Again, we first consider the case $r=\rho$, in which the claim follows
from the definition of $\widetilde\theta_\rho$. In fact, if the claim
did not hold, we
would arrive at the contradiction $\widetilde\theta_\rho\le\theta_-$.
Now, for radii
$r$ with $|r-\rho|<\delta$ the assertion 
follows from the continuous dependence
of the left-hand side upon $r$.
Having established \eqref{claim-cont-2}, we can conclude from 
the definition of $\widetilde\theta_r$ that
$\widetilde\theta_r>\theta_-=\widetilde\theta_\rho-\eps$. Altogether
we have shown that $\widetilde\theta_\rho-\eps<
\widetilde\theta_r< \widetilde\theta_\rho+\eps$  for all radii $r\in(0,R]$
with $|r-\rho|<\delta$, which
completes the proof of the continuity of $(0,R]\ni\rho\mapsto
\widetilde\theta_\rho$.

Unfortunately, the mapping $(0,R]\ni\rho\to \widetilde\theta_\rho$ might not be
decreasing.
For this reason we work with a modified version of
$\widetilde\theta_\rho$, which we denote by $\theta_\rho$. This
modification is done by a rising sun type construction. More precisely, we
define 
$$
	\theta_\rho
	\equiv
	\theta_{z_o;\rho}
	:=
	\max_{r\in[\rho,R]} \widetilde\theta_{z_o;r}\,. 
$$
As an immediate consequence of the construction, the mapping
$(0,R]\ni\rho\mapsto \theta_\rho$ is
continuous and monotonically decreasing. % ; see Figure~\ref{fig:sunrise} for an illustration of the construction.
% \begin{figure}[h] 
% \centering
% \includegraphics[width=280pt]{Sunrise.pdf}
% \caption{Illustration of the rising sun construction}\label{fig:sunrise}
% \end{figure}
In general, the modified cylinders $Q_{\rho}^{(\theta_\rho)}(z_o)$ cannot 
be expected to be intrinsic in the sense of \eqref{theta>lambda}.  
However, we can show that the cylinders $Q_{s}^{(\theta_\rho)}(z_o)$
are sub-intrinsic for all radii $s\ge\rho$. More precisely, we have 
\begin{align}\label{sub-intrinsic-2}
	\biint_{Q_{s}^{(\theta_{\rho})}(z_o)} 
	\frac{\abs{u}^{1+m}}{s^{\frac{1+m}m}} \dx\dt
	\le 
	\theta_\rho^{2m}
	\quad\mbox{for any $0<\rho\le s\le R$.}
\end{align}
For the proof of this inequality, we use the chain of inequalities 
$\widetilde\theta_s\le \theta_{s}\le \theta_{\rho}$, which implies
$Q_{s}^{(\theta_{\rho})}(z_o)\subseteq
Q_{s}^{(\widetilde\theta_{s})}(z_o)$, and the fact that the latter
cylinder is sub-intrinsic. In this way, we deduce 
\begin{align*}
	\biint_{Q_{s}^{(\theta_{\rho})}(z_o)} 
	\frac{\abs{u}^{1+m}}{s^{\frac{1+m}m}} \dx\dt
	&\le 
	\Big(\frac{\theta_{\rho}}{\widetilde\theta_{s}}\Big)^{2m-\frac{2m}{d}}
	\biint_{Q_{s}^{(\widetilde\theta_{s})}(z_o)}
    \frac{\abs{u}^{1+m}}{s^{\frac{1+m}m}} \dx\dt \\
	&\le 
	\Big(\frac{\theta_{\rho}}{\widetilde\theta_{s}}\Big)^{2m-\frac{2m}{d}}
	\widetilde\theta_{s}^{2m}
	=
	\theta_\rho^{2m-\frac{2m}{d}}\,\widetilde\theta_s^{\frac{2m}{d}}
	\le 
	\theta_\rho^{2m},
\end{align*}
which is exactly assertion~\eqref{sub-intrinsic-2}. 
Next, we define 
\begin{equation}\label{rho-tilde}
	\widetilde\rho
	:=
	\left\{
	\begin{array}{cl}
	R, &
	\quad\mbox{if $\theta_\rho=\lambda_o$,} \\[5pt]
	\min\big\{s\in[\rho, R]: \theta_s=\widetilde \theta_s \big\}, &
	\quad\mbox{if $\theta_\rho>\lambda_o$.}
	\end{array}
	\right.
\end{equation}
By definition, for any $s\in [\rho,\widetilde\rho]$ we have
$\theta_s=\widetilde\theta_{\widetilde\rho}$.
% ; see again Figure~\ref{fig:sunrise}.
Our next goal is the proof of the upper bound 
\begin{align}\label{bound-theta}
  \theta_\rho 
  \le
  % \Big(\frac{s}{\rho}\Big)^{\frac{d[m(n+2)+2]}{2m^2}}
  % \theta_{s}
  % =
  \Big(\frac{s}{\rho}\Big)^{\frac d{2m}(n+2+\frac2m)}
  \theta_{s}  
  \quad\mbox{for any $s\in(\rho,R]$.}
\end{align}
In the case $\theta_\rho=\lambda_o$ this is immediate since
$\theta_s\ge\lambda_o$. Another easy case is that of radii
$s\in(\rho,\widetilde\rho]$, since then we have
$\theta_s=\widetilde\theta_{\widetilde\rho}=\theta_\rho$. Therefore, it
only remains to prove \eqref{bound-theta}  
for the case $\theta_\rho>\lambda_o$ and radii $s\in(\widetilde\rho,R]$.
To this end, we use the monotonicity of $\rho\mapsto\theta_\rho$, \eqref{theta>lambda} and
\eqref{sub-intrinsic-2} to conclude 
\begin{align*}
	\theta_\rho^{\frac{2m}{d}}
	&=
	\widetilde \theta_{\widetilde\rho}^{\frac{2m}{d}}
	=
	\frac{1}{|Q_{\widetilde\rho}|}
	\iint_{Q_{\widetilde\rho}^{(\theta_{\widetilde\rho})}(z_o)}
	\frac{\abs{u}^{1+m}}{\widetilde\rho^{\frac{1+m}m}} \dx\dt \\
	&\le
	\Big(\frac{s}{\widetilde\rho}\Big)^{n+2+\frac2m}
	\frac{1}{|Q_{s}|} 
	\iint_{Q_{s}^{(\theta_{s})}(z_o)} 
	\frac{\abs{u}^{1+m}}{s^{\frac{1+m}m}} \dx\dt 
	\le
	\Big(\frac{s}{\rho}\Big)^{n+2+\frac2m}
	\theta_{s}^{\frac{2m}{d}} .
\end{align*}
This yields the claim~\eqref{bound-theta} also  in the remaining case.
We now apply \eqref{bound-theta} with $s=R$. Using moreover the fact
$\theta_{R}=\widetilde\theta_{R}$ and
estimate \eqref{bound-theta-R} for $\widetilde\theta_{R}$, we deduce 
\begin{align}\label{bound-theta-2}
	\theta_\rho 
	\le
	\Big(\frac{R}{\rho}\Big)^{\frac d{2m}(n+2+\frac2m)}
	\theta_{R} 
	\le
	\Big(\frac{4R}{\rho}\Big)^{\frac d{2m}(n+2+\frac2m)}
	\lambda_o
\end{align}
for every $\rho\in(0,R]$.       
In summary, for every $z_o\in Q_{2R}$, we have constructed a
system of concentric sub-intrinsic cylinders
$Q_{\rho}^{(\theta_{z_o;\rho})}(z_o)$ with radii $\rho\in (0,R]$. As a
consequence of the monotonicity of $\rho\mapsto \theta_{z_o;\rho}$, these
cylinders are nested in the sense that
$$
	\mbox{$Q_{r}^{(\theta_{z_o;r})}(z_o)
	\subset
	Q_{s}^{(\theta_{z_o;s})}(z_o)$ whenever $0<r<s\le R$.}
$$
However, keep in mind that in general these cylinders are not intrinsic but only sub-intrinsic.

\subsection{Covering property}\label{sec:covering}
Our next goal is to establish the following Vitali type covering property for the cylinders constructed in the last section. 

\begin{lemma}\label{lem:vitali}
There exists a constant $\hat c=\hat c(n,m)\ge 20$ 
such that, whenever  $\mathcal F$ is a collection of cylinders
$Q_{4r}^{(\theta_{z;r})}(z)$,
where $Q_{r}^{(\theta_{z;r})}(z)$ is a cylinder of the form constructed in Section {\upshape\ref{sec:cylinders}} with radius $r\in(0,\tfrac{R}{\hat c})$, then there exists a countable subfamily $\mathcal G$ of disjoint cylinders in $\mathcal F$ such that  
\begin{equation}\label{covering}
	\bigcup_{Q\in\mathcal F} Q
	\subseteq 
	\bigcup_{Q\in\mathcal G} \widehat Q,
\end{equation}
where $\widehat Q$ denotes the $\frac{\hat c}{4}$-times enlarged cylinder $Q$, i.e.~if $Q=Q_{4r}^{(\theta_{z;r})}(z)$, then $\widehat Q=Q_{\hat c r}^{(\theta_{z;r})}(z)$.
\end{lemma}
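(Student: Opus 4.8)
The strategy is the classical Vitali-type argument adapted to this family of anisotropic, parameter-dependent cylinders. First I would reduce to a bounded situation: since all cylinders in $\mathcal F$ have radius $r\in(0,R/\hat c)$ and center $z\in Q_{2R}$, they all live in $Q_{4R}$, hence have a uniform upper bound on size, so $\sup\{r:Q_{4r}^{(\theta_{z;r})}(z)\in\mathcal F\}<\infty$. Split $\mathcal F$ into generations $\mathcal F_j$ consisting of those cylinders whose radius lies in $(2^{-j-1}r_{\max},2^{-j}r_{\max}]$, and pick $\mathcal G$ greedily generation by generation: let $\mathcal G_0$ be a maximal disjoint subfamily of $\mathcal F_0$, and inductively let $\mathcal G_{j+1}$ be a maximal disjoint subfamily of those cylinders in $\mathcal F_{j+1}$ disjoint from every cylinder already chosen in $\mathcal G_0\cup\dots\cup\mathcal G_j$. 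Set $\mathcal G=\bigcup_j\mathcal G_j$; it is countable because disjoint cylinders with a generation-dependent lower volume bound can only fit finitely many per generation inside $Q_{4R}$.

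The heart of the matter is the covering inclusion \eqref{covering}. Take $Q=Q_{4r}^{(\theta_{z;r})}(z)\in\mathcal F$, say in generation $j$. By maximality of $\mathcal G_j$ there is some $Q'=Q_{4r'}^{(\theta_{z';r'})}(z')\in\mathcal G_0\cup\dots\cup\mathcal G_j$ with $Q\cap Q'\neq\emptyset$ and $r'>\tfrac12 r$ (since $Q'$ belongs to an earlier or the same generation, its radius is comparable to or larger than $r$). I must then show $Q\subseteq\widehat{Q'}=Q_{\hat c r'}^{(\theta_{z';r'})}(z')$. This requires two comparisons. The time direction is easy: the time-slabs have half-lengths $r^{\frac{1+m}{m}}$ and $(r')^{\frac{1+m}{m}}$, which do not involve $\theta$, so overlap plus $r'>r/2$ gives $\Lambda_r(t)\subseteq\Lambda_{Cr'}(t')$ for an absolute $C$. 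The spatial direction is the delicate point, because the balls are $B_\rho^{(\theta)}(x)=\{|x-x_o|<\theta^{\frac{m(m-1)}{1+m}}\rho\}$ and the exponent $\frac{m(m-1)}{1+m}$ is \emph{negative} for $m<1$, so a larger $\theta$ means a \emph{smaller} ball. Hence I need to compare $\theta_{z;r}$ and $\theta_{z';r'}$; this is exactly what \eqref{bound-theta} (and its consequence: $\theta$ changes at most polynomially in the radius ratio) is for. The two centers $z,z'$ are both in $Q_{2R}$ and the radii are comparable, so combining \eqref{bound-theta} applied to both systems — using that enlarging a cylinder in radius while keeping $\theta$ fixed and then re-comparing $\theta$'s at the common scale only costs a factor depending on $n,m$ — one gets $\theta_{z;r}\ge c(n,m)\,\theta_{z';r'}$ and $\theta_{z';r'}\ge c(n,m)\,\theta_{z;r}$, i.e. the two scaling parameters are comparable with constants depending only on $n,m$. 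With that, the spatial radii $\theta_{z;r}^{\frac{m(m-1)}{1+m}} r$ and $\theta_{z';r'}^{\frac{m(m-1)}{1+m}} r'$ are comparable, and a triangle-inequality estimate on $|x-x'|$ (bounded by the overlap point, hence by $\theta_{z;r}^{\frac{m(m-1)}{1+m}}r+\theta_{z';r'}^{\frac{m(m-1)}{1+m}}r'$) shows $Q\subseteq Q_{\hat c r'}^{(\theta_{z';r'})}(z')$ once $\hat c=\hat c(n,m)$ is chosen large enough (and $\ge 20$).

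The main obstacle, and the step deserving the most care, is precisely this comparability of $\theta_{z;r}$ and $\theta_{z';r'}$ for two \emph{different} centers: \eqref{bound-theta} is stated only for a fixed center and two radii, so I must bridge centers. The clean way is: since $Q\cap Q'\neq\emptyset$ and both radii are comparable, the smaller-$\theta$ cylinder of one system, enlarged by a fixed factor, contains the cylinder of the other system; feeding this into the definition of $\widetilde\theta$ (sub-intrinsicity is a statement about mean values of $|u|^{1+m}$, which are monotone under inclusion up to measure ratios) forces the two $\theta$'s to differ by at most a dimensional factor. One must track carefully that the exponent $\frac{m(m-1)}{1+m}$ being negative does not reverse an inequality in the wrong direction; this sign bookkeeping is where an error would most easily slip in. Once comparability is in hand, fixing $\hat c$ and verifying countability are routine.
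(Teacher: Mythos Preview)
Your approach matches the paper's: dyadic generations, greedy maximal selection, and then the covering step reduces to comparing the two scaling parameters for overlapping cylinders. Two points where your sketch needs sharpening.

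First, only one direction of the comparison is needed, namely $\theta_{z';r'}\le C(n,m)\,\theta_{z;r}$ (with $Q'$ the selected cylinder). Since the exponent $\frac{m(m-1)}{1+m}$ is non-positive, this single inequality bounds both $\theta_{z;r}^{\frac{m(m-1)}{1+m}}4r$ and $|x-x'|$ by a constant times $\theta_{z';r'}^{\frac{m(m-1)}{1+m}}r'$, which is all the spatial inclusion requires.

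Second, and this is the actual trap rather than the sign bookkeeping you flagged: sub-intrinsicity \eqref{sub-intrinsic-2} only gives \emph{upper} bounds on integrals in terms of $\theta$, whereas bounding $\theta_{z';r'}$ from above demands a \emph{lower} bound on an integral. That lower bound (the intrinsic equality) is not available at $r'$ but only at the associated radius $\widetilde r'\ge r'$ from \eqref{rho-tilde}, where $\theta_{z';r'}=\widetilde\theta_{z';\widetilde r'}$. The paper therefore proves the inclusion $Q_{\widetilde r'}^{(\theta_{z';r'})}(z')\subseteq Q_{\mu\widetilde r'}^{(\theta_{z;\mu\widetilde r'})}(z)$ with $\mu=13$ (assuming without loss of generality $\theta_{z;r}\le\theta_{z';r'}$, then using monotonicity of $\rho\mapsto\theta_{z;\rho}$ together with $r\le 2r'\le\mu\widetilde r'$), and combines this with \eqref{sub-intrinsic-2} for the $z$-system at radius $\mu\widetilde r'$. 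This requires $\mu\widetilde r'\le R$; in the remaining case $\widetilde r'>R/\mu$ one argues separately through the global bound via $\lambda_o$. Your phrase ``feeding this into the definition of $\widetilde\theta$'' points in the right direction, but working at $r'$ instead of $\widetilde r'$ the argument would not close.
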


\begin{proof}
For $j\in\N$ we subdivide $\mathcal{F}$ into the subfamilies 
$$
  \mathcal F_j
  :=
  \big\{Q_{4r}^{(\theta_{z;r})}(z)\in \mathcal F: 
  \tfrac{R}{2^j\hat c}<r\le \tfrac{R}{2^{j-1}\hat c} \big\}.
$$
Then, we choose finite subfamilies $\mathcal G_j\subset \mathcal F_j$
according to the following scheme.
We start by choosing $\mathcal G_1$ as an arbitrary maximal disjoint collection of cylinders in
$\mathcal F_1$. The subfamily $\mathcal G_1$ is finite, since
\eqref{bound-theta-2} and the definition of $\mathcal F_1$ imply a
lower bound on the volume of each cylinder in $\mathcal G_1$.
Now, assuming that the subfamilies $\mathcal G_1, \mathcal G_2,
\dots, \mathcal G_{k-1}$ have already been constructed for some integer
$k\ge 2$, we choose $\mathcal G_k$ to be any maximal disjoint subcollection of 
$$
	\Bigg\{Q\in \mathcal F_k: 
	Q\cap Q^\ast=\emptyset \mbox{ for any $ \displaystyle Q^\ast\in \bigcup_{j=1}^{k-1} \mathcal G_j $}
	\Bigg\}.
$$
For the same reason as above, the collection $\mathcal G_k$ is
finite. Hence, the family 
$$
	\mathcal G
	:=
	\bigcup_{j=1}^\infty \mathcal G_j\subseteq\mathcal{F}
$$
defines a countable collection of disjoint cylinders.
It remains to prove that
for each cylinder  $Q\in\mathcal F$ there exists a cylinder $Q^\ast\in\mathcal
G$ with  $Q\subset \widehat {Q}^\ast$.  
To this end, we fix a cylinder
$Q=Q_{4r}^{(\theta_{z;r})}(z)\in\mathcal F$.
Let $j\in\N$ be such that $Q\in\mathcal F_j$.
The maximality of $\mathcal G_j$ ensures the existence of a cylinder
$Q^\ast=Q_{4r_\ast}^{(\theta_{z_\ast;r_\ast})}(z_\ast)\in
\bigcup_{i=1}^{j} \mathcal G_i$ with $Q\cap Q^\ast\not=\emptyset$.
We will show that this cylinder has the desired property $Q\subset \widehat {Q}^\ast$.
First, we observe that the properties $r\le\tfrac{R}{2^{j-1}\hat c}$
and $r_\ast>\tfrac{R}{2^j\hat c}$ imply $r\le 2r_\ast$, which ensures
$\Lambda_{4r}(t)\subseteq \Lambda_{20r_\ast}(t_\ast)$. For the proof of the corresponding spatial inclusion
$B^{(\theta_{z,r})}_{4r}(x)\subseteq B_{\hat c r_\ast}^{(\theta_{z_\ast,r_\ast})}(x_\ast)$, we
first shall derive the bound 
\begin{equation}\label{control-theta-1}
  \theta_{z_\ast;r_\ast}
  \le
  52^{\frac d{2m}(n+2+\frac2m)}\,
  \theta_{z;r}\,.
\end{equation}
We recall the definition \eqref{rho-tilde} of the radius $\widetilde
r_\ast\in [r_\ast,R]$ which is 
associated to the cylinder
$Q_{r_\ast}^{(\theta_{z_\ast;r_\ast})}(z_\ast)$. According to the
definition, we either have that $Q_{\widetilde
  r_\ast}^{(\theta_{z_\ast;r_\ast})}(z_\ast)$ is intrinsic
or  that $\widetilde r_\ast=R$ and $\theta_{z_\ast;r_\ast}=\lambda_o$. In
the second alternative, the claim~\eqref{control-theta-1} is
immediate, since 
$$
	\theta_{z_\ast;r_\ast}
	=
	\lambda_o
	\le 
	\theta_{z;r}\,.
$$
Therefore, it remains to consider the case that $Q_{\widetilde
  r_\ast}^{(\theta_{z_\ast;r_\ast})}(z_\ast)$ is intrinsic in the
sense that 
\begin{align}\label{control-theta-2}
	\theta_{z_\ast;r_\ast}^{\frac{2m}{d}}
	=
	\frac{1}{|Q_{\widetilde r_\ast}|}
	\iint_{Q_{\widetilde r_\ast}^{(\theta_{z_\ast;r_\ast})}(z_\ast)}
	\frac{\abs{u}^{1+m}}{\widetilde r_\ast^{\frac{1+m}m}} \dx\dt.
\end{align}
We distinguish between the cases  $\widetilde r_\ast\le \frac{R}{\mu}$ and $\widetilde r_\ast> \frac{R}{\mu}$, where $\mu:= 13$. 
We start with the latter case. Using \eqref{control-theta-2} and the
definition of $\lambda_o$ and $\theta_{z;r}$, we estimate
\begin{align*}
	\theta_{z_\ast;r_\ast}^{\frac{2m}{d}}
	&\le
	\Big(\frac{4R}{\widetilde r_\ast}\Big)^{\frac{1+m}m} \frac{1}{|Q_{\widetilde r_\ast}|}
	\iint_{Q_{4R}}
	\frac{\abs{u}^{1+m}}{(4R)^{\frac{1+m}m}} \dx\dt \\
	&\le
	\Big(\frac{4 R}{\widetilde r_\ast}\Big)^{n+2+\frac2m} 
	\lambda_o^{\frac{2m}{d}}\\
	&\le
	(4\mu)^{n+2+\frac2m} \theta_{z;r}^{\frac{2m}{d}},
\end{align*}
which can be rewritten in the form
\begin{align*}
	\theta_{z_\ast;r_\ast}
	\le
	(4\mu)^{\frac d{2m}(n+2+\frac2m)}\,\theta_{z;r}\,.
\end{align*}
This yields \eqref{control-theta-2} in the second case, and it only
remains to consider the first case $\widetilde r_\ast\le \frac{R}{\mu}$. Here, the
key step is to prove the inclusion
\begin{equation}\label{incl-cyl}
  Q_{\widetilde r_\ast}^{(\theta_{z_\ast;r_\ast})}(z_\ast)
  \subseteq 
  Q_{\mu\widetilde r_\ast}^{(\theta_{z;\mu\widetilde r_\ast})}(z).
\end{equation}
We first observe
that $\widetilde r_\ast\ge r_\ast$ and
$|t-t_\ast|<(4r)^{\frac{1+m}m}+(4r_\ast)^{\frac{1+m}m}\le
(12r_\ast)^{\frac{1+m}m}$ implies $\Lambda_{\widetilde r_\ast}(t_\ast)\subseteq \Lambda_{\mu\widetilde
  r_\ast}(t)$.  In addition, we have 
\begin{equation}\label{x-x_ast}
	|x-x_\ast|\le \theta_{z;r}^{\frac{m(m-1)}{1+m}}4r
	+
	\theta_{z_\ast;r_\ast}^{\frac{m(m-1)}{1+m}}4r_\ast.
\end{equation}
At this point, we may assume that
$\theta_{z;r}\le\theta_{z_\ast;r_\ast}$, since \eqref{control-theta-1}
clearly is satisfied in the alternative  case.
Then, the monotonicity of $\rho\mapsto \theta_{z;\rho}$  and 
$r\le 2r_\ast\le2\widetilde r_\ast\le\mu\widetilde r_\ast$ imply
$$
	\theta_{z_\ast;r_\ast}
	\ge 
	\theta_{z;r}
	\ge
	\theta_{z;\mu \widetilde r_\ast}.
$$
Combining this with \eqref{x-x_ast}, we conclude  that
\begin{align*}
	\theta_{z_\ast;r_\ast}^{\frac{m(m-1)}{1+m}}\widetilde r_\ast + |x-x_\ast|
	\le
    \theta_{z_\ast;r_\ast}^{\frac{m(m-1)}{1+m}}5\widetilde r_\ast+
    \theta_{z;r}^{\frac{m(m-1)}{1+m}}4r 
	\le
	\theta_{z;\mu\widetilde r_\ast}^{\frac{m(m-1)}{1+m}}\mu\widetilde r_\ast,
\end{align*}
from which we deduce the inclusion 
$$
  B_{\widetilde r_\ast}^{(\theta_{z_\ast;r_\ast})}(x_\ast)
  \subseteq 
  B_{\mu\widetilde r_\ast}^{(\theta_{z;\mu\widetilde r_\ast})}(x).
$$
This completes the proof of \eqref{incl-cyl}.

Using \eqref{control-theta-2}, \eqref{incl-cyl}, and
\eqref{sub-intrinsic-2} with $\rho=s=\mu\tilde r_\ast$, we estimate 
\begin{align*}
	\theta_{z_\ast;r_\ast}^{\frac{2m}{d}}
	\le
	\frac{\mu^{\frac{1+m}{m}}}{|Q_{\widetilde r_\ast}|} 
	\iint_{Q_{\mu\widetilde r_\ast}^{(\theta_{z;\mu\widetilde r_\ast})}(z)}
	\frac{\abs{u}^{1+m}}{(\mu\tilde r_\ast)^{\frac{1+m}{m}}} \dx\dt
	\le
	\mu^{n+2+\frac2m} \theta_{z;r}^{\frac{2m}{d}},
\end{align*}
which implies 
\begin{align*}
	\theta_{z_\ast;r_\ast}
	\le
	\mu^{\frac d{2m}(n+2+\frac2m)}\,\theta_{z;r}.
\end{align*}
This yields \eqref{control-theta-1} also in the last 
case.

Having established \eqref{control-theta-1}, it remains to prove the inclusion $Q_{4r}^{(\theta_{z;r})}(z)\subseteq
Q_{\hat c r_\ast}^{(\theta_{z_\ast;r_\ast})}(z_\ast)$, which will
complete the proof of the Vitali covering property. First, we note
that for any choice of $\hat c$ with $\hat c\ge 20$, we have
$\Lambda_{4r}(t)\subseteq\Lambda_{\hat c
r_\ast}(t_\ast)$. Moreover, from the facts \eqref{x-x_ast}, $r\le 2r_\ast$, and \eqref{control-theta-1} we conclude
\begin{align*}
	\theta_{z;r}^{\frac{m(m-1)}{1+m}}4r + |x-x_\ast|
	&\le
	2\theta_{z;r}^{\frac{m(m-1)}{1+m}}4r + 
	\theta_{z_\ast;r_\ast}^{\frac{m(m-1)}{1+m}}4r_\ast \\
	&\le
	4\Big[4\cdot 52^{\frac{d(1-m)[m(n+2)+2]}{2m(1+m)}}+1\Big]
	\theta_{z_\ast;r_\ast}^{\frac{m(m-1)}{1+m}}r_\ast \\
	&\le
	\theta_{z_\ast;r_\ast}^{\frac{m(m-1)}{1+m}}\hat c r_\ast,
\end{align*}
for a suitable choice of the constant $\hat c=\hat c(n,m)\ge 20$. This implies the spatial inclusion
$B_{4r}^{(\theta_{z;r})}(x)\subseteq B_{\hat c
r_\ast}^{(\theta_{z_\ast;r_\ast})}(x_\ast)$, which is the remaining
piece of information to conclude that
$$
	Q = Q_{4r}^{(\theta_{z;r})}(z)
	\subseteq
	Q_{\hat c r_\ast}^{(\theta_{z_\ast;r_\ast})}(z_\ast)
    =
	\widehat Q^\ast.
$$
Thereby we have established the inclusion \eqref{covering}, which yields
the desired Vitali type covering property.
\end{proof}

\subsection{Stopping time argument}
Now, we fix the parameter $\lambda_o$ by letting
\begin{equation*}
 	\lambda_o
 	:=
 	1+\Bigg[\biint_{Q_{4R}} 
 	\bigg[\frac{\abs{u}^{1+m}}{(4 R)^{\frac{1+m}m}} +
    |D\power{u}{m}|^2 +
 	|F|^{2}\bigg]\,\dx\dt
    \Bigg]^{\frac{d}{2m}}.
\end{equation*}
For $\lambda>\lambda_o$ and $r\in(0,2R]$, we define the super-level set
of the function $|D\power{u}{m}|$ by 
$$
	\boldsymbol E(r,\lambda)
	:=
	\Big\{z\in Q_{r}: 
	\mbox{$z$ is a Lebesgue point of $|D\power{u}{m}|$ and 
	$|D\power{u}{m}|(z) > \lambda^{m}$}\Big\}.
$$
In the definition of $\boldsymbol E(r,\lambda)$, the notion of Lebesgue points is to be understood
with regard to the system of cylinders constructed in
Section~\ref{sec:cylinders}. We point out that also with respect to
these cylinders, $\mathcal L^{n+1}$-a.e. point is a Lebesgue
point. This follows from \cite[2.9.1]{Federer}, since we already have
verified the Vitali type covering property in Lemma~\ref{lem:vitali}. 
%In order to have a self contained proof, we give the corresponding Lebesgue differentiation theorem and its proof in the appendix; see Theorem \ref{thm:Lebesgue}. 
Now, we fix radii $R\le R_1<R_2\le 2R$. Note that for any
$z_o\in Q_{R_1}$, $\kappa\ge1$ and $\rho\in(0,R_2-R_1]$ we have
\begin{equation*}
  	Q_\rho^{(\kappa)}(z_o)
	\subseteq 
	Q_{R_2}
	\subseteq 
	Q_{2R}.
\end{equation*}
For the following argument, we restrict ourselves to levels $\lambda$
with 
\begin{equation}\label{choice_lambda}
	\lambda
	>
	B\lambda_o,
	\quad\mbox{where}
	\quad
	B
	:=
	\Big(\frac{4\hat c R}{R_2-R_1}\Big)^{\frac{d(n+2)(1+m)}{(2m)^2}}
	>1,
\end{equation}
and where $\hat c=\hat c(n,m)$ is the constant from the Vitali-type
covering Lemma \ref{lem:vitali}.
We fix $z_o\in \boldsymbol E(R_1,\lambda)$ and abbreviate
$\theta_s\equiv \theta_{z_o;s}$ for $s\in(0,R]$ throughout this
section. By definition of $\boldsymbol E(R_1,\lambda)$, we have
\begin{equation}\label{larger-lambda}
	\liminf_{s\downarrow 0} 
	\biint_{Q_{s}^{(\theta_{s})}(z_o)} 
	\big[|D\power{u}{m}|^2 + |F|^2 \big] 
	\dx\dt
	\ge
	|D\power{u}{m}|^2(z_o)
	>
	\lambda^{2m}.
\end{equation}
On the other hand, for any radius $s$ with
\begin{align}\label{radius-s}
	\frac{R_2-R_1}{\hat c}\le s\le R
\end{align}
the definition of $\lambda_o$, estimate
\eqref{bound-theta-2}, assumption \eqref{radius-s} and the definition
of $d$ imply 
\begin{align}\label{smaller-lambda}
	\biint_{Q_{s}^{(\theta_s)}(z_o)} &
	\big[|D\power{u}{m}|^2+|F|^2\big] 
	\dx\dt \nonumber\\
	&\le
	\frac{|Q_{4R}|}{|Q_{s}^{(\theta_s)}|}
	\biint_{Q_{4R}} \big[|D\power{u}{m}|^2 + |F|^2\big] \dx\dt \nonumber\\
	&\le
	\frac{|Q_{4R}|}{|Q_{s}|}\,
	\theta_s^{\frac{nm(1-m)}{1+m}}
	\lambda_o^{\frac{2m}{d}} \nonumber\\
	&\le
	\Big(\frac{4R}{s}\Big)^{n+1+\frac{1}m+\frac{d}{2m}(n+2+\frac2m)\cdot\frac{nm(1-m)}{1+m}} 
	\lambda_o^{\frac{nm(1-m)}{1+m}+\frac{2m}d} \nonumber\\
        &=
	\Big(\frac{4R}{s}\Big)^{\frac{d(n+2)(1+m)}{2m}} 
	\lambda_o^{2m} \nonumber\\
	&\le
%	\Big(\frac{4\hat c R}{R_2-R_1}\Big)^{\frac{d(n+2)(1+m)}{2m}} \lambda_o^{2m} \nonumber\\
%	&=
	B^{2m} \lambda_o^{2m} 
	<
	\lambda^{2m}.
\end{align}
By the continuity of the mapping $s\mapsto\theta_s$ and the absolute
continuity of the integral, the left-hand side of
\eqref{smaller-lambda} depends continuously on $s$. Therefore, in view
of \eqref{larger-lambda} and \eqref{smaller-lambda}, there exists
a maximal radius $0<\rho_{z_o} < \tfrac{R_2-R_1}{\hat c}$ for which
the above inequality becomes an equality, i.e.~$\rho_{z_o}$ is the
maximal radius with 
\begin{align}\label{=lambda}
	\biint_{Q_{\rho_{z_o}}^{(\theta_{\rho_{z_o}})}(z_o)} 
	\big[|D\power{u}{m}|^2 + |F|^2\big] \dx\dt
	=
	\lambda^{2m}.
\end{align}
The maximality of the radius $\rho_{z_o}$ implies in particular that 
\begin{align*}
	\biint_{Q_{s}^{(\theta_{s})}(z_o)} 
	\big[|D\power{u}{m}|^2 + |F|^2\big] \dx\dt
	<
	\lambda^{2m}
	\quad
	\mbox{for any $s\in (\rho_{z_o}, R]$.}
\end{align*}
Due to the monotonicity of the mapping $\rho\mapsto \theta_\rho$ and \eqref{bound-theta} we have 
\begin{align*}
	\theta_\sigma
	\le 
	\theta_{s}
	\le
	\Big(\frac{\sigma}{s}\Big)^{\frac{d}{2m}(n+2+\frac2m)}
  	\theta_{\sigma}
	\quad
	\mbox{for any $\rho_{z_o}\le s<\sigma\le R$,}
\end{align*}
so that 
\begin{align}\label{<lambda}
	\biint_{Q_{\sigma}^{(\theta_{s})}(z_o)} 
	\big[|D\power{u}{m}|^2 + |F|^2\big] \dx\dt
	&\le
	\Big(\frac{\theta_{s}}{\theta_\sigma}\Big)^{\frac{nm(1-m)}{1+m}}
	\biint_{Q_{\sigma}^{(\theta_{\sigma})}(z_o)} 
	\big[|D\power{u}{m}|^2 + |F|^2\big] \dx\dt \nonumber\\
	&<
	\Big(\frac{\sigma}{s}\Big)^{\frac{dn(1-m)}{2(1+m)}(n+2+\frac2m)}\,
	\lambda^{2m}
\end{align}
for any $\rho_{z_o}\le s<\sigma\le R$. 
Finally, we recall that the cylinders are constructed in such a
way that 
\begin{equation*}
  Q_{\hat c\rho_{z_o}}^{(\theta_{\rho_{z_o}})}(z_o)
  \subseteq
  Q_{\hat c\rho_{z_o}}(z_o)
  \subseteq
  Q_{R_2}.
\end{equation*}

\subsection{A Reverse H\"older Inequality}
For a level  $\lambda$ as in \eqref{choice_lambda} and a point $z_o\in
\boldsymbol E(R_1,\lambda)$, we consider the radius
$\widetilde\rho_{z_o}\in[\rho_{z_o},R]$ as defined in
\eqref{rho-tilde}. In the sequel
we write $\theta_{\rho_{z_o}}$ instead of $\theta_{z_o;\rho_{z_o}}$.
We recall that $\widetilde\rho_{z_o}$ has been defined in such a way
that for any $s\in [\rho_{z_o},
\widetilde\rho_{z_o}]$ we have
$\theta_s=\theta_{\rho_{z_o}}$,  and, in particular,
$\theta_{\widetilde\rho_{z_o}}=\theta_{\rho_{z_o}}$.

The aim of this section is the proof of a reverse
H\"older inequality on 
$Q_{2\rho_{z_o}}^{(\theta_{\rho_{z_o}})}(z_o)$. To this end, we need to
verify the assumptions of Proposition \ref{prop:revhoelder}. 
First, we note that \eqref{sub-intrinsic-2} with
$s=4\rho_{z_o}$ implies
\begin{equation*}
  \biint_{Q_{4\rho_{z_o}}^{(\theta_{\rho_{z_o}})}(z_o)} 
  \frac{\abs{u}^{1+m}}{(4\rho_{z_o})^{\frac{1+m}m}} \dx\dt
  \le 
  \theta_{\rho_{z_o}}^{2m},
\end{equation*}
which means that assumption \eqref{sub-intrinsic} is fulfilled for the cylinder
$Q_{2\rho_{z_o}}^{(\theta_{\rho_{z_o}})}(z_o)$ with $K=1$.
For the estimate of $\theta_{\rho_{z_o}}^{2m}$ from above,  
we distinguish between the cases $\widetilde\rho_{z_o}\le 2\rho_{z_o}$
and $\widetilde\rho_{z_o}> 2\rho_{z_o}$.
In the former case, we use the fact
$\theta_{\rho_{z_o}}=\theta_{\widetilde\rho_{z_o}}=\widetilde\theta_{\widetilde\rho_{z_o}}$, which implies
that 
$Q_{\widetilde\rho_{z_o}}^{(\theta_{\rho_{z_o}})}(z_o)$ is intrinsic, 
and then the bound $\widetilde\rho_{z_o}\le 2\rho_{z_o}$, with the
result that
\begin{equation*}
  \theta_{\rho_{z_o}}^{2m}
  =
  \biint_{Q_{\widetilde\rho_{z_o}}^{(\theta_{\rho_{z_o}})}(z_o)}
  \frac{\abs{u}^{1+m}}{\widetilde\rho_{z_o}^{\frac{1+m}m}}
  \dx\dt
  \le
  2^{n+2+\frac2m}
  \biint_{Q_{2\rho_{z_o}}^{(\theta_{\rho_{z_o}})}(z_o)}
  \frac{\abs{u}^{1+m}}{(2\rho_{z_o})^{\frac{1+m}m}}
  \dx\dt.
\end{equation*}
This means that in this case, assumption \eqref{super-intrinsic}$_1$
is satisfied with $K\equiv  2^{n+2+\frac2m}$.
Next, we consider the remaining case 
$\widetilde\rho_{z_o}>2\rho_{z_o}$.
Here, we claim that
\begin{equation}
  \label{claim-case-2}
  \theta_{\rho_{z_o}}\le c(n,m,\nu,L)\lambda.
\end{equation}
For the proof we treat the cases 
$\widetilde\rho_{z_o}\in(2\rho_{z_o},\frac R2]$ and
$\widetilde\rho_{z_o}\in(\frac R2,R]$ separately. 
In the latter case, we use
\eqref{bound-theta-2} with $\rho=\widetilde\rho_{z_o}$ and  
the bound $\widetilde\rho_{z_o}>\frac R2$ in order to estimate
$\theta_{\rho_{z_o}}=\theta_{\widetilde\rho_{z_o}}\le c\lambda_o \le c\lambda$, 
which yields \eqref{claim-case-2}. 
In the alternative case $\widetilde\rho_{z_o}\in(2\rho_{z_o},\frac
R2]$, the cylinder
$Q_{\widetilde\rho_{z_o}}^{(\theta_{\rho_{z_o}})}(z_o)$ is intrinsic
by definition of $\widetilde\rho_{z_o}$, and the two times enlarged  cylinder
$Q_{2\widetilde\rho_{z_o}}^{(\theta_{\rho_{z_o}})}(z_o)$ is
sub-intrinsic by \eqref{sub-intrinsic-2}.
Therefore, assumptions \eqref{sub-intrinsic} and \eqref{super-intrinsic}$_1$ of Lemma~\ref{lem:theta} are satisfied for $Q_{\widetilde\rho_{z_o}}^{(\theta_{\rho_{z_o}})}(z_o)$. The application of the lemma yields
\begin{equation*}
  	\theta_{\rho_{z_o}}^m
  	\le
	\tfrac1{\sqrt2} \Bigg[
	\biint_{Q_{\widetilde\rho_{z_o}/2}^{(\theta_{\rho_{z_o}})}(z_o)}\!
    \frac{|u|^{1+m}}{(\widetilde\rho_{z_o}/2)^{\frac{1+m}{m}}} \dx\dt 
    \Bigg]^{\frac12} +
  	c \bigg[\biint_{Q_{2\widetilde\rho_{z_o}}^{(\theta_{\rho_{z_o}})}(z_o)} \!
   	\big[|D\power{u}{m}|^{2}+|F|^2\big] \dx\dt\bigg]^{\frac12}
\end{equation*}
with  a constant $c=c(n,m,L)$. For the first term, we exploit the
sub-intrinsic coupling \eqref{sub-intrinsic-2} with radii
$\rho=\rho_{z_o}$ and $s=\frac12\widetilde\rho_{z_o}>\rho_{z_o}$.
For the estimate of the last integral, we
recall that $\theta_{\widetilde\rho_{z_o}}=\theta_{\rho_{z_o}}$,
which allows to use \eqref{<lambda} with $s=\widetilde\rho_{z_o}$ and
$\sigma=2\widetilde\rho_{z_o}\in(s,R]$. This leads to the upper bound
\begin{align*}
  	\theta_{\rho_{z_o}}^m
  	\le
	\tfrac1{\sqrt2} \theta_{\rho_{z_o}}^m +
  	c\,\lambda^m .
\end{align*}
Here, we re-absorb $\tfrac1{\sqrt2} \theta_{\rho_{z_o}}^m$ into
the left and obtain the claim
\eqref{claim-case-2} in any case.

Combining this with the identity~\eqref{=lambda}, we obtain the bound  
\begin{align*}
  	\theta_{\rho_{z_o}}^{2m}
  	\le
  	c\,\lambda^{2m}
  	\le
  	c\,\biint_{Q_{2\rho_{z_o}}^{(\theta_{\rho_{z_o}})}(z_o)} 
  	\big[|D\power{u}{m}|^2 + |F|^2\big] \dx\dt.
\end{align*}
This means that in the case $\widetilde\rho_{z_o}>2\rho_{z_o}$
assumption \eqref{super-intrinsic}$_2$ is satisfied  on
$Q_{2\rho_{z_o}}^{(\theta_{\rho_{z_o}})}$ with a constant $K\equiv K(n,m,\nu,L)$.
In conclusion, in any case we have shown that all hypotheses of Proposition~\ref{prop:revhoelder} 
are satisfied. Consequently, the proposition yields the desired reverse
H\"older inequality 
\begin{align}\label{rev-hoelder}
	\biint_{Q_{2\rho_{z_o}}^{(\theta_{\rho_{z_o}})}(z_o)} & 
	|D\power{u}{m}|^2 \dx\dt \nonumber\\
	&\le
	c\bigg[\biint_{Q_{4\rho_{z_o}}^{(\theta_{\rho_{z_o}})}(z_o)} 
	|D\power{u}{m}|^{2q} \dx\dt \bigg]^{\frac{1}{q}} +
	c\, \biint_{Q_{4\rho_{z_o}}^{(\theta_{\rho_{z_o}})}(z_o)} |F|^2 \dx\dt,
\end{align}
for a constant $c=c(n,m,\nu,L)$ and with  exponent $q=\max\{\frac{n(1+m)}{2(nm+1+m)},\frac12\}<1$.

\subsection{Estimate on super-level sets}
So far we have shown that 
for every $\lambda$ as in  \eqref{choice_lambda} and every $z_o\in
\boldsymbol E(R_1,\lambda)$, there exists a  cylinder
$Q_{\rho_{z_o}}^{(\theta_{z_o;\rho_{z_o}})}(z_o)$ with 
$Q_{\hat c\rho_{z_o}}^{(\theta_{z_o;\rho_{z_o}})}(z_o)\subseteq Q_{R_2}$, 
for
which the properties \eqref{=lambda} and \eqref{<lambda},  and
the reverse H\"older type estimate \eqref{rev-hoelder} are satisfied. 
This allows us to establish
a reverse H\"older inequality for the distribution
function of $|D\power{u}{m}|^2$
by a Vitali covering type argument. The precise argument is as follows.
We define the super-level set of the inhomogeneity $F$ by 
$$
	\boldsymbol F(r,\lambda)
	:=
	\Big\{z\in Q_{r}: 
	\mbox{$z$ is a Lebesgue point of $F$ and 
	$|F|>\lambda^{m}$}\Big\},
$$
where again, the Lebesgue points have to be understood with respect to the cylinders constructed in  Section~\ref{sec:cylinders}.
Using \eqref{=lambda} and \eqref{rev-hoelder}, we estimate
\begin{align}\label{level-set-1}
	\lambda^{2m}
	&=
	\biint_{Q_{\rho_{z_o}}^{(\theta_{\rho_{z_o}})}(z_o)} 
	\big[|D\power{u}{m}|^2 + |F|^{2}\big] \d x\d t \nonumber\\
	&\le
	c\,\Bigg[\biint_{Q_{4\rho_{z_o}}^{(\theta_{\rho_{z_o}})}(z_o)} 
	|D\power{u}{m}|^{2q} \dx\dt \Bigg]^{\frac{1}{q}} +
	c\, \biint_{Q_{4\rho_{z_o}}^{(\theta_{\rho_{z_o}})}(z_o)} |F|^{2} \dx\dt 
	\nonumber\\
	&\le
	c\,\eta^{2m}\lambda^{2m} +
	c\,\Bigg[\frac{1}{\big|Q_{4\rho_{z_o}}^{(\theta_{\rho_{z_o}})}(z_o)\big|}
	\iint_{Q_{4\rho_{z_o}}^{(\theta_{\rho_{z_o}})}(z_o)\cap \boldsymbol E(R_2,\eta\lambda)} 
	|D\power{u}{m}|^{2q} \dx\dt \Bigg]^{\frac{1}{q}} \nonumber\\
	&\quad+
	\frac{c}{\big|Q_{4\rho_{z_o}}^{(\theta_{\rho_{z_o}})}(z_o)\big|}
	\iint_{Q_{4\rho_{z_o}}^{(\theta_{\rho_{z_o}})}(z_o)\cap \boldsymbol F(R_2,\eta\lambda)} 
	|F|^{2} \dx\dt ,
\end{align}
for a constant  $c=c(n,m,\nu ,L)$ and any $\eta\in(0,1)$.
We choose the parameter $\eta\in(0,1)$ in dependence on $n,m,\nu,$ and $L$
in such a way that $\eta^{2m}=\frac{1}{2c}$. This allows us
to re-absorb the  term $\frac12\lambda^{2m}$ into the
left-hand side. For the estimate of the second last term, we apply 
H\"older's inequality and \eqref{<lambda}, with the result
\begin{align*}
	\Bigg[\frac{1}{\big|Q_{4\rho_{z_o}}^{(\theta_{\rho_{z_o}})}(z_o)\big|} &
	\iint_{Q_{4\rho_{z_o}}^{(\theta_{\rho_{z_o}})}(z_o)\cap \boldsymbol E(R_2,\eta\lambda)} 
	|D\power{u}{m}|^{2q} \dx\dt \Bigg]^{\frac{1}{q}-1} \\
	&\le
	\bigg[\biint_{Q_{4\rho_{z_o}}^{(\theta_{\rho_{z_o}})}(z_o)} 
	|D\power{u}{m}|^{2} \dx\dt \bigg]^{1-q}
	\le
	c\,\lambda^{2m(1-q)}.
\end{align*}
We use this to estimate the right-hand side of \eqref{level-set-1}. The  resulting inequality is then
multiplied  by
$\big|Q_{4\rho_{z_o}}^{(\theta_{\rho_{z_o}})}(z_o)\big|$. In this way,
we arrive at
\begin{align*}
	\lambda^{2m}\big|Q_{4\rho_{z_o}}^{(\theta_{\rho_{z_o}})}(z_o)\big|
	&\le
	c\iint_{Q_{4\rho_{z_o}}^{(\theta_{\rho_{z_o}})}(z_o)\cap \boldsymbol E(R_2,\eta\lambda)} 
	\lambda^{2m(1-q)}|D\power{u}{m}|^{2q} \dx\dt \\
	&\quad+
	c
	\iint_{Q_{4\rho_{z_o}}^{(\theta_{\rho_{z_o}})}(z_o)\cap \boldsymbol F(R_2,\eta\lambda)} 
	 |F|^{2} \dx\dt,
\end{align*}
where $c=c(n,m,\nu,L)$. 
On the other hand, we bound the left-hand side
from below by the use of \eqref{<lambda}. This leads to the
inequality
\begin{align*}
	\lambda^{2m}
    &\ge
    c\,
	\biint_{Q_{\hat c\rho_{z_o}}^{(\theta_{\rho_{z_o}})}(z_o)} 
	|D\power{u}{m}|^2 \dx\dt,
\end{align*}
where we relied on the fact that $\hat c$ is a universal constant depending only on $n$ and $m$.       
Combining the two preceding estimates and using  again that $\hat
c=\hat c(n,m)$, we arrive at
\begin{align}\label{level-est}
	\iint_{Q_{\hat c\rho_{z_o}}^{(\theta_{\rho_{z_o}})}(z_o)} 
	|D\power{u}{m}|^2 \d x\d t 
	&\le
    c
    \iint_{Q_{4\rho_{z_o}}^{(\theta_{\rho_{z_o}})}(z_o)\cap \boldsymbol E(R_2,\eta\lambda)} 
	\lambda^{2m(1-q)}|D\power{u}{m}|^{2q} \dx\dt \nonumber\\
	&\quad +
	c
	\iint_{Q_{4\rho_{z_o}}^{(\theta_{\rho_{z_o}})}(z_o)\cap \boldsymbol F(R_2,\eta\lambda)} 
	|F|^{2} \dx\dt
\end{align}
for a constant $c=c(n,m,\nu ,L)$.
Since the preceding inequality holds for every center $z_o\in \boldsymbol
E(R_1,\lambda)$, we conclude that it is possible to cover the
super-level set
$\boldsymbol E(R_1,\lambda)$ by a  family
$\mathcal F\equiv\big\{Q_{4\rho_{z_o}}^{(\theta_{z_o;\rho_{z_o}})}(z_o)\big\}$ of parabolic cylinders 
with center $z_o\in \boldsymbol E(R_1,\lambda)$, such that each of the
cylinders is contained in $Q_{R_2}$, and that
on each cylinder estimate \eqref{level-est} is valid.
An application of the Vitali type Covering Lemma \ref{lem:vitali}
provides us with a countable disjoint subfamily
$$
	\Big\{Q_{4\rho_{z_i}}^{(\theta_{z_i;\rho_{z_i}})}(z_i)\Big\}_{i\in\N}
	\subseteq 
	\mathcal F
$$
with the property
$$
	\boldsymbol E(R_1,\lambda)
	\subseteq 
	\bigcup_{i=1}^\infty Q_{\hat c\rho_{z_i}}^{(\theta_{z_i;\rho_{z_i}})}(z_i)
	\subseteq
	Q_{R_2}.
$$
We apply \eqref{level-est} for each of the cylinders
$Q_{4\rho_{z_i}}^{(\theta_{z_i;\rho_{z_i}})}(z_i)$ and add the
resulting inequalities. Since the cylinders
$Q_{4\rho_{z_i}}^{(\theta_{z_i;\rho_{z_i}})}(z_i)$ are pairwise
disjoint, we obtain 
\begin{align*}
	\iint_{\boldsymbol E(R_1,\lambda)} 
	|D\power{u}{m}|^2 \dx\dt
%	&\le
%	\sum_{i=1}^\infty
%	\iint_{Q_{\hat c\rho_{z_i}}^{(\theta_{z_i;\rho_{z_i}})}(z_i)} 
%	|D\power{u}{m}|^2 \d x\d t \nonumber\\
%	&\le
%	c\sum_{i=1}^\infty\iint_{Q_{4\rho_{z_i}}^{(\theta_{z_i;\rho_{z_i}})}(z_i)\cap \boldsymbol E(R_2,\eta\lambda)} 
%	\lambda^{2m(1-q)}|D\power{u}{m}|^{2q} \dx\dt \\
%	&\quad +
%	c\sum_{i=1}^\infty
%	\iint_{Q_{4\rho_{z_i}}^{(\theta_{z_i;\rho_{z_i}})}(z_i)\cap \boldsymbol F(R_2,\eta\lambda)} 
%	|F|^{2} \dx\dt \\
	&\le
	c\iint_{\boldsymbol E(R_2,\eta\lambda)} 
	\lambda^{2m(1-q)}|D\power{u}{m}|^{2q} \dx\dt +
	c\iint_{\boldsymbol F(R_2,\eta\lambda)} 
	|F|^{2} \dx\dt,
\end{align*}
with $c=c(n,m,\nu,L)$. In order to compensate for the fact that
super-level sets of different levels appear on both sides of the
preceeding estimate, we need an
estimate on the difference $\boldsymbol E(R_1,\eta\lambda)\setminus
\boldsymbol E(R_1,\lambda)$.
However, on this set we can simply estimate
$|D\power{u}{m}|^2\le\lambda^{2m}$, which leads to the bound 
\begin{align*}
	\iint_{\boldsymbol E(R_1,\eta\lambda)\setminus \boldsymbol E(R_1,\lambda)} 
	|D\power{u}{m}|^2 \d x\d t
%	&\le
%	\iint_{\boldsymbol E(R_1,\eta\lambda)\setminus \boldsymbol E(R_1,\lambda)} 
%	\lambda^{2m(1-q)}\big|D\power{u}{m}\big|^{2q} \dx\dt \\
	&\le
	\iint_{\boldsymbol E(R_2,\eta\lambda)} 
	\lambda^{2m(1-q)}|D\power{u}{m}|^{2q} \dx\dt.
\end{align*}
Adding the last two inequalities, we obtain a reverse
H\"older type inequality for the distribution function of $|D\power{u}{m}|^2$ for levels $\eta\lambda$. 
%Indeed, for any $\lambda>B\lambda_o$,
%we have 
%\begin{align*}
%	\iint_{\boldsymbol E(R_1,\eta\lambda)} &
%	|D\power{u}{m}|^2 \dx\dt \\
%	&\le
%	c\iint_{\boldsymbol E(R_2,\eta\lambda)} 
%	\lambda^{2m(1-q)}|D\power{u}{m}|^{2q} \dx\dt +
%	c\iint_{\boldsymbol F(R_2,\eta\lambda)} |F|^{2} \dx\dt.
%\end{align*}
In this inequality we replace  $\eta\lambda$ by $\lambda$ and recall that $\eta\in(0,1)$
was chosen as a universal constant depending only on $n,m,\nu$, and
$L$. In this way, we obtain for any $\lambda\ge \eta B\lambda_o
=:\lambda_1$ that
\begin{align}\label{pre-1}
	\iint_{\boldsymbol E(R_1,\lambda)} &
	|D\power{u}{m}|^2 \d x\d t \nonumber\\
%	&\le
%	c \iint_{\boldsymbol E(R_2,\lambda)} 
%	\Big(\frac{\lambda}{\eta}\Big)^{2m(1-q)}
%	\big|D\power{u}{m}\big|^{2q} \dx\dt +
%	c\iint_{\boldsymbol F(R_2,\lambda)} |F|^{2} \dx\dt\nonumber\\
	&\le
	c\iint_{\boldsymbol E(R_2,\lambda)} 
	\lambda^{2m(1-q)}|D\power{u}{m}|^{2q} \dx\dt +
	c \iint_{\boldsymbol F(R_2,\lambda)} |F|^{2} \dx\dt
\end{align}
with a constant $c=c(n,m,\nu ,L)$. This is the desired
reverse H\"older type inequality for the distribution function of $D\power{u}{m}$.

\subsection{Proof of the gradient estimate}
Once \eqref{pre-1} is established the final higher integrability result follows 
by integrating  \eqref{pre-1} over the range of possible values $\lambda$. Here, one has to take into account 
that the existence  of  certain integrals appearing in the proof are not guaranteed in advance. This technical point can be
overcome by use of truncation methods. Since all arguments have been elaborated in detail for example
in \cite{BDKS-higher-int, BDKS-doubly} we omit the details, and  only state the final outcome. There exists $\epsilon_o=\epsilon_o(n,m,\nu,L)\in (0,1]$ such that for any $0<\epsilon<\epsilon_1:=\min\{\epsilon_o, \sigma-2\}$ we have
\begin{align*}
	\biint_{Q_{R}} 
	|D\power{u}{m}|^{2+\epsilon} \d x\d t 
	\le
	c\, 
	\lambda_o^{\epsilon m} 
	\biint_{Q_{2R}}
	|D\power{u}{m}|^{2} \dx\dt +
	c\,\biint_{Q_{2R}}  |F|^{2+\epsilon}
	\dx\dt .
\end{align*}
At this stage it remains to bound  $\lambda_o$. This can be achieved
by an application  of the energy estimate from Lemma \ref{lem:energy} with $\theta=1$
and $a=0$ and Young's inequality. Indeed, we have
\begin{align*}
	\lambda_o
	\le
	c
	\Bigg[
	1+\biint_{Q_{8R}} \bigg[\frac{\abs{u}^{1+m}}{R^{\frac{1+m}{m}}} + |F|^{2}\bigg]
	\dx\dt
	\Bigg]^{\frac{d}{2m}} ,
\end{align*}
where $c=c(n,m,\nu,L)$.
Plugging this into the preceding estimate, we arrive at 
\begin{align*}
	\biint_{Q_{R}}
	|D\power{u}{m}|^{2+\epsilon} \d x\d t 
	&\le
	c
	\Bigg[
	1+\biint_{Q_{8R}} \bigg[\frac{|u|^{1+m}}{R^{\frac{1+m}{m}}} + |F|^{2}\bigg]
	\dx\dt
	\Bigg]^{\frac{\epsilon d}{2}}
	\biint_{Q_{2R}}
	|D\power{u}{m}|^{2} \dx\dt\\
        &\qquad\qquad+
	c\,\biint_{Q_{2R}}  |F|^{2+\epsilon} \dx\dt ,
\end{align*}
for a constant  $c=c(n,m,\nu,L)$. The asserted reverse H\"older inequality
\eqref{eq:higher-int} now follows by a standard covering argument. More
precisely, we cover $Q_R$ by finitely many cylinders of radius $\frac
R8$, apply the preceding estimate on each of the smaller cylinders and
take the sum. 
This yields the same estimate as above, but with integrals over
$Q_{2R}$ instead of $Q_{8R}$. This completes the proof of
Theorem~\ref{thm:higherint}.
\qed

\subsection{Proof of Corollary \ref{cor:higher-int}}
We consider a standard parabolic
cylinder $C_{2R}(z_o):=
B_{2R}(x_o)\times(t_o-(2R)^2,t_o+(2R)^2)\subseteq\Omega_T$ and rescale
the problem via
\begin{equation*}
	\left\{
	\begin{array}{c}
  	v(x,t):=u(x_o+Rx,t_o+R^2t) \\[5pt]
	\mathbf B(x,t,u,\xi):=R\,\mathbf A\big(x_o+Rx,t_o+R^2t,u,\tfrac1{R}\xi\big) \\[5pt]
	G(x,t):=R\,F(x_o+Rx,t_o+R^2t),
	\end{array}
	\right.
\end{equation*}
whenever $(x,t)\in C_{2}$ and $(u,\xi)\in\R^N\times\R^{Nn}$.
The rescaled function $v$ is a weak solution of the differential equation
\begin{equation*}
    \partial_tv-\mathrm{div}\,\mathbf B(x,t,v,D\power{v}{m})=\mathrm{div}\,G
    \qquad\mbox{in $\widetilde Q:=Q_{2^\frac{2m}{1+m}}\subseteq C_{2}$}
  \end{equation*}
  in the sense of Definition~\ref{def:weak_solution}. Moreover, the rescaled
  vector-field $\mathbf B$ satisfies
  assumptions~\eqref{growth}. Consequently, we can apply estimate
  \eqref{eq:higher-int} to $v$ on the cylinder $\widetilde Q$, which gives 
   \begin{align*}
	\biint_{\frac12\widetilde Q} &
          |D\power{v}{m}|^{2+\epsilon} \d x\d t \\
	&\le
	c\bigg[
	1+\biint_{\widetilde Q} \big[\abs{v}^{1+m} + |G|^2\big]\d x\d t 
	\bigg]^{\frac{\epsilon d}{2}}
	\biint_{\widetilde Q} |D\power{v}{m}|^{2} \dx\dt +
	c\,\biint_{\widetilde Q} |G|^{2+\epsilon} \dx\dt,    
  \end{align*}
  for every $\eps\in(0,\eps_1]$, where $c=c(n,m,\nu,L)$, and we
  abbreviated $\frac12\widetilde Q:=Q_{2^{\frac{m-1}{1+m}}}$.   
  Scaling back and using the fact that the cylinder $C_{\gamma R}$
  with $\gamma:=2^\frac{m-1}{2m}$ is contained in the re-scaled
  version of the cylinder $\frac12\widetilde Q$ 
  we deduce
   \begin{align*}
	R^{2+\eps} &
	\biint_{C_{\gamma R}(z_o)}
    |D\power{u}{m}|^{2+\epsilon} \d x\d t \\
	&\le
	c\,R^2\bigg[
	1+\biint_{C_{2R}(z_o)} \big[\abs{u}^{1+m} + R^2|F|^2\big] \d x\d t 
	\bigg]^{\frac{\epsilon d}{2}}
	\biint_{C_{2R}(z_o)}
	|D\power{u}{m}|^{2} \dx\dt \\
	&\quad +
	c\, R^{2+\epsilon}
	\biint_{C_{2R}(z_o)} |F|^{2+\epsilon} \dx\dt .
   \end{align*}
The asserted estimate on the pair $C_R$, $C_{2R}$ of standard parabolic cylinders now follows with a standard covering argument.  This 
finishes  the proof of Corollary~\ref{cor:higher-int}. \qed

\end{document}